\newcommand{\R}{\mathbb R}
\newcommand{\p}{\partial}
\newcommand{\g}{\gamma}
\renewcommand{\l}{\lambda}
\renewcommand{\lg}{\Lambda}
\newcommand{\sgn}{{\rm sgn}}
\newcommand{\N}{\mathbb{N}}
\newtheorem{theorem}{Theorem}[section]
\newtheorem{lemma}[theorem]{Lemma}
\newtheorem{proposition}[theorem]{Proposition}
\newtheorem{corollary}[theorem]{Corollary}
\theoremstyle{remark}
\newtheorem{remark}{Remark}[section]
\theoremstyle{definition}
\newtheorem{definition}[theorem]{Definition}
\newtheorem*{merci}{Acknowledgements}
\numberwithin{equation}{section}
\begin{document}

\title[The Cauchy problem for fKP equations]{The Cauchy problem for the fractionaL Kadomtsev-Petviashvili equations}
\author[F. Linares]{Felipe Linares}
\address{ IMPA\\ Estrada Dona Castorina 110\\ Rio de Janeiro 22460-320, RJ Brasil}
\email{ linares@impa.br}
\author[D. Pilod]{Didier Pilod}
\address{Instituto de Matem\' atica, Universidade Federal do Rio de Janeiro, Caixa Postal 68530 CEP 21941-97, Rio de Janeiro, RJ Brasil}
\email{didier@im.ufrj.br}
\author[J.-C. Saut]{Jean-Claude Saut}
\address{Laboratoire de Math\' ematiques, UMR 8628,\\
Universit\' e Paris-Sud et CNRS,\\ 91405 Orsay, France}
\email{jean-claude.saut@u-psud.fr}

\date{May 22, 2017}
\maketitle

\begin{abstract}
The aim of this paper is to prove various ill-posedness and well-posedness results on the Cauchy problem associated to a class of  fractional Kadomtsev-Petviashvili (KP) equations including the KP version of the Ben\-ja\-min-Ono and Intermediate Long Wave equations.
\end{abstract}

\large
\section{Introduction}

We continue here our study of \lq\lq weak\rq\rq\, dispersive perturbations of the Burgers equation. We will consider here {\it fractional} KP type equations

\begin{equation} \label{fKP}
\left\{\begin{array}{l}
u_t+uu_x-D_x^\alpha u_x+\kappa\partial_x^{-1} u_{yy}=0,\quad (x,y,t) \in \R^2\times \R_+,\\
u(x,y,0)=u_0(x,y),\quad (x,y) \in \mathbb R^2 ,
\end{array}\right.
\end{equation}
where $0<\alpha \le 2$, $\kappa =1$ corresponds to the fKP-II equation and $\kappa =-1$ to the fKP-I equation. Here $D^{\alpha}_x$ denotes the Riesz potential of order $-\alpha$ in the $x$ direction, \textit{i.e.} $D^{\alpha}_x$ is defined via Fourier transform by 
$$\big(D^{\alpha}_xf\big)^{\wedge}(\xi,\eta)=|\xi|^{\alpha}\widehat{f}(\xi,\eta) \, .$$

This equation can be thought as a two-dimensional weakly transverse version  of the fractional KdV equation (fKdV)
\begin{equation}\label{fKdV}
u_t+uu_x\pm D_x^\alpha u_x=0 .
\end{equation}

Actually the (formal) derivation in \cite{KaPe}  of the Kadomtsev-Petviashvili equation is independent of the dispersion and $x$ and concerns only the transport part of the KdV equation 
\begin{equation}\label{KdV}
u_t+u_x+uu_x+(\frac{1}{3}-T)u_{xxx}=0 ,
\end{equation}
where $T\geq 0$ is the surface tension parameter.

More precisely, it consists in looking for a   weakly transverse perturbation 
of the one-dimensional transport equation
\begin{equation}\label{transp}
u_t+u_x=0.
\end{equation}
This perturbation is obtained by a Taylor expansion of the dispersion relation  $\omega(k_1,k_2)=\sqrt{k_1^2+k_2^2}$ of the two-dimensional linear wave equation assuming that $|k_1|\ll 1 $ and $\frac{|k_2|}{|k_1|}\ll 1.$ 
Namely, one writes formally
$$\omega(k_1,k_2)\sim\pm k_1\left(1+\frac12\frac{k_2^2}{k_1^2}\right)$$
which, with the $+$ sign say, amounts, coming back to the physical variables,  to adding a nonlocal term to the transport equation,  
\begin{equation}\label {perttransp}
u_t+u_x+\frac{1}{2}\partial_x^{-1}u_{yy}=0.
\end{equation}
Here the operator $\partial_x^{-1}$ is defined via Fourier transform,
$$\widehat{\partial_x^{-1}f}(\xi,\eta)=\frac{1}{i\xi}\widehat{f}(\xi,\eta) \, .$$

The same formal procedure is applied in \cite{KaPe} to the KdV equation \eqref{KdV}, 
assuming that the transverse dispersive effects are of the same order as the x-dispersive and nonlinear terms,   yielding the KP equation in the form

\begin{equation}\label{KPbrut}
u_t+u_x+uu_x+(\frac{1}{3}-T)u_{xxx}+\frac{1}{2}\partial_x^{-1}u_{yy}=0.
\end{equation}
The KP-II equation is obtained when $T<\frac{1}{3},$ the KP-I when $T>\frac{1}{3}.$\footnote{Of course this formal argument has to be justified in \lq\lq concrete\rq\rq \, situations. See for instance  \cite{La} and the references therein for the justification in the context of water waves.}

It is thus quite natural to apply this formal process to \eqref {fKdV} to obtain \eqref {fKP}. The sign of the x-dispersive term in \eqref{fKdV} will determine that of $\partial_x^{-1}u_{yy}$ in \eqref {fKP}.

Note also that \eqref{fKP} with $\alpha=1$ is the relevant KP version of the Benjamin-Ono equation (KP-BO). Very similar to the KP-BO equation is the KP version of the intermediate long wave equation (ILW):
\begin{equation}\label{ILW}
u_t+uu_x-\mathcal L_{ILW}u_x=0,
\end{equation}
where $\widehat{\mathcal L_{ILW} f}(\xi)=p_{ILW}(\xi)\hat f (\xi),$ \quad $p_{ILW}(\xi)=\xi\coth (\delta\xi)-\frac{1}{\delta}, \delta >0$. The ILW equation is a model for long, weakly nonlinear internal waves, $\delta$ being proportional to the depth of the bottom layer (see \cite{BLS} for a rigorous derivation and study of the ILW and related equations).
The Benjamin--Ono equation is obtained in the infinite depth limit, $\delta \to +\infty.$

Contrary to their one-dimensional version, the  KP-ILW equation 
\begin{equation}\label{KP-ILW}
u_t+uu_x-\mathcal L_{ILW}u_x\pm \partial_x^{-1}u_{yy}=0,
\end{equation}
or the KP-BO equation 
\begin{equation}\label{KP-BO}
u_t+uu_x-\mathcal Hu_{xx}\pm \partial_x^{-1}u_{yy}=0,
\end{equation}
where $ \mathcal H$ is the Hilbert transform, are not known to be completely integrable.

It is worth noticing that in the context of internal waves, only the $+$ sign in \eqref{KP-BO}, \eqref{KP-ILW} is relevant. In fact, when surface tension is taken into account in this setting, the correction to the one-directional BO or ILW equations is to adding a new dispersive term $-\beta u_{xxx}$ (see \cite{Ben}) so that the corresponding KP version writes

$$u_t+uu_x-\mathcal Hu_{xx}+\beta u_{xxx}+ \partial_x^{-1}u_{yy}=0,$$

or

$$
u_t+uu_x-\mathcal L_{ILW}u_x+\beta u_{xxx}+ \partial_x^{-1}u_{yy}=0,
$$
that are variant of the classical KP equations which we will not consider in the present paper (see for instance \cite{CGH} for the solitary wave solutions).

\vspace{0.5cm}
As in our previous works \cite{LPS2, LPS3}, in order to study the influence of a \lq\lq weak\rq\rq \, dispersive perturbation on a quasilinear hyperbolic equation we  
have chosen to fix the quadratic nonlinearity and to vary the strength of the dispersion.

Equation \eqref{fKP} can thus be also seen  as  a toy model to investigate the effects of a \lq\lq KP like\rq\rq \, perturbation on the Burgers equation. When $\alpha =\pm1/2$ it has some 
links with the {\it full dispersion}  KP equation derived in \cite{La} and considered in \cite{LS} as an alternative model to  KP (with less unphysical shortcomings) for  gravity-capillary surface waves in the weakly transverse regime:

 \begin{equation}\label{FDbis}
\partial_t u+\tilde c_{WW}(\sqrt\mu|D^{\mu}|)(1+\mu \frac{D_2^2}{D^2_1})^{1/2} u_x+\mu \frac{3}{2} uu_x=0,
\end{equation}
with 
$$
\tilde c_{WW}(\sqrt \mu k)=(1+\beta \mu k^2)^{\frac{1}{2}}\left(\frac{\tanh \sqrt \mu k}{\sqrt \mu k}\right)^{1/2},
$$
where $ \beta\geq0$ is a dimensionless coefficient measuring the surface tension
effects and
$$|D^\mu|=\sqrt{D_1^2+\mu D_2^2}, \quad D_1=\frac{1}{i} \partial_x,\quad D_2=\frac{1}{i}\partial_y.$$
For the sake of completeness we recall here the regime leading to the asymptotic model \eqref{FDbis} for surface water waves.

Denoting by $h$ a typical depth of the fluid layer, $a$ a typical amplitude of the wave, $\lambda_x$ and $\lambda_y$ typical wave lengths in $x$ and $y$ respectively, the relevant regime here is when

$$\mu\sim \frac{a}{h}\sim \left(\frac{\lambda_x}{\lambda_y}\right)^2\sim \left(\frac{h}{\lambda_x}\right)^2\ll 1.$$
For purely gravity waves, $\beta =0, $ \eqref {FDbis} becomes
\begin{equation}\label{FDter}
\partial_t u+ c_{WW}(\sqrt\mu|D^{\mu}|)(1+\mu \frac{D_2^2}{D^2_1})^{1/2} u_x+\mu \frac{3}{2} uu_x=0,
\end{equation}
with 
$$
 c_{WW}(\sqrt \mu k)=\left(\frac{\tanh \sqrt \mu k}{\sqrt \mu k}\right)^{1/2},
$$
 which can be viewed as the KP version of the (one-dimensional) Whitham equation
\begin{equation}\label{Whit}
\partial_t u+ c_{WW}(\sqrt\mu|D^{\mu}|) u_x+\mu \frac{3}{2} uu_x=0.
\end{equation}
One recovers formally the classical KP-II ($\beta<1/3$) and KP-I ($\beta>1/3$) equations from \eqref{FDbis}, by keeping the first order term in the expansions with respect to $\sqrt\mu k$ of the nonlocal operators appearing in the equations.

Observe also that \eqref{FDbis},  behave for high frequencies as \eqref{fKP} with  $\alpha=-\frac{1}{2}$ when $\beta=0$ and  $\alpha =\frac{1}{2}$ when $\beta>0.$ Thus, similarly to the Whitham equation, \eqref{FDbis} displays various regimes depending on the frequency range. In particular it is proven in \cite{EhGr} that \eqref{FDbis} with $\beta >0$ possesses solitary wave solutions close to the \lq\lq lumps\rq\rq \, of the KP-I equation. We plan to come back to those issues in a next paper.

\vspace{0.3cm}
In addition to the $L^2$ norm, \eqref{fKP} conserves formally the energy (Hamiltonian)

\begin{equation}\label{HamfKP}
H_\alpha(u)=\int_{\R^2} (\frac{1}{2}|D_x^{\frac\alpha2} u|^2-\kappa\frac{1}{2}|\partial_x^{-1}u_y|^2-\frac{1}{6} u^3).
\end{equation}
The corresponding energy space is 
$$X^{\frac{\alpha}2}= \lbrace u\in L^2(\R^2) \ :  \ D^{\frac\alpha2}_x u, \ \partial_x^{-1}u_y\in L^2(\R^2)\rbrace.$$

The first question for \eqref{fKP} is to which values of $\alpha$ correspond to the $L^2$ and the energy critical cases? 

For the generalized KP equations 
\begin{equation}\label{gKPI}
u_t+u^pu_x+u_{xxx}+\kappa \partial_x^{-1}u_{yy}=0,
 \end{equation} 
 the corresponding  values of $p$ are respectively $p=4/3$ and $p=4$ (see \cite{deBS}). 
 
 One checks readily that the transformation
 $$u_\lambda(x,y,t)=\lambda^\alpha u(\lambda x, \lambda ^{\frac{\alpha+2}{2}} y, \lambda^{\alpha+1} t)$$ leaves \eqref{fKP} invariant. 
 Moreover, $\|u_\lambda\|_{L^2}=\lambda^{\frac{3\alpha -4}{4}}\|u\|_{L^2}$, so that $\alpha=\frac{4}{3}$ is the $L^2$ critical exponent. Note that the BO-KP and the ILW-KP equations are $L^2$  supercritical.

\vspace{0.3cm}
The following fractionary Gagliardo-Nirenberg  inequality is actually a special case of    Lemma 2.1  in \cite{BLT} which considers only  $1\leq \alpha\leq 2$ but a close inspection at the proof reveals that it is still valid when $\alpha \geq\frac{4}{5}.$

\begin{lemma}\label{BLT}
Let $\frac{4}{5}\leq\alpha < 1.$ For any $f\in X^{\frac{\alpha}2}$ one has 
$$\|f\|_{L^3}^3\leq c\|f\|_{L^2}^{\frac{5\alpha-4}{\alpha+2}}\|f\|_{H^{\frac\alpha2}_x}^{\frac{18-5\alpha}{2(\alpha+2)}}\|\partial_x^{-1}f_y\|_{L^2}^{\frac12} \, ,$$
where $\|\cdot\|_{H^{\frac{\alpha}2}_x}$ denotes the natural norm on the space $$H_x^{\frac\alpha2}(\R^2)=\lbrace f\in L^2(\R^2) \ : \  D^{\frac\alpha2}_xf \in L^2(\R^2)\rbrace.$$
\end{lemma}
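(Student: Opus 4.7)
The plan is to first establish the scale-invariant bound
$$
\|f\|_{L^3}^3 \leq C\,\|f\|_{L^2}^{(5\alpha-4)/(2\alpha)}\,\|D_x^{\alpha/2}f\|_{L^2}^{2/\alpha}\,\|\partial_x^{-1}f_y\|_{L^2}^{1/2},
$$
which is meaningful precisely when its $L^2$-exponent is non-negative, i.e.\ $\alpha \geq 4/5$. The inequality as stated then follows by trading a portion of the $L^2$-factor and the full $\dot H^{\alpha/2}_x$-factor for powers of $\|f\|_{H^{\alpha/2}_x}$, using $\|f\|_{L^2}, \|D_x^{\alpha/2}f\|_{L^2}\leq \|f\|_{H^{\alpha/2}_x}$.

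The starting point is the two-dimensional anisotropic Gagliardo--Nirenberg inequality
$$
\|f\|_{L^3(\R^2)}^3 \leq C\,\|f\|_{L^2}^{c_0}\,\|D_x^{s_1}f\|_{L^2}^{c_1}\,\|D_y^{s_2}f\|_{L^2}^{c_2},
$$
valid for non-negative exponents with $c_0+c_1+c_2=3$ and the two scaling identities $s_1 c_1 = s_2 c_2 = 1/2$. I would apply it with $s_1 = \alpha/2$ (so $c_1 = 1/\alpha$), $s_2 = \alpha/(\alpha+2)$ (so $c_2 = (\alpha+2)/(2\alpha)$), and $c_0 = (5\alpha-4)/(2\alpha)$; the constraint $c_0 \geq 0$ is precisely $\alpha\geq 4/5$. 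The value of $s_2$ is engineered so that $s_2/(1-s_2)=\alpha/2$; splitting $|\eta|^{2s_2}=(|\eta|/|\xi|)^{2s_2}|\xi|^{2s_2}$ and applying H\"older in Fourier variables with conjugate exponents $1/s_2$ and $1/(1-s_2)$ gives
$$
\|D_y^{s_2}f\|_{L^2}^{c_2}\leq \|\partial_x^{-1}f_y\|_{L^2}^{s_2 c_2}\,\|D_x^{s_2/(1-s_2)}f\|_{L^2}^{(1-s_2)c_2}=\|\partial_x^{-1}f_y\|_{L^2}^{1/2}\,\|D_x^{\alpha/2}f\|_{L^2}^{1/\alpha}.
$$
Inserting this into the anisotropic Gagliardo--Nirenberg produces the scale-invariant bound.

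The passage to the stated inequality is arithmetic. The decomposition
$$
\frac{5\alpha-4}{2\alpha}=\frac{5\alpha-4}{\alpha+2}+\frac{(5\alpha-4)(2-\alpha)}{2\alpha(\alpha+2)}
$$
has non-negative second summand for $\alpha\leq 2$; using $\|f\|_{L^2}\leq \|f\|_{H^{\alpha/2}_x}$ the excess $L^2$-power, combined with $\|D_x^{\alpha/2}f\|_{L^2}^{2/\alpha}\leq \|f\|_{H^{\alpha/2}_x}^{2/\alpha}$, is absorbed into the $H^{\alpha/2}_x$-norm with total exponent
$$
\frac{2}{\alpha}+\frac{(5\alpha-4)(2-\alpha)}{2\alpha(\alpha+2)}=\frac{18-5\alpha}{2(\alpha+2)},
$$
matching the statement.

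The only nontrivial step is the anisotropic Gagliardo--Nirenberg inequality above; this is essentially Lemma 2.1 of \cite{BLT}, where it is proved under the restriction $1\leq \alpha \leq 2$. On inspecting the argument, the sole role of this lower bound on $\alpha$ is to ensure the non-negativity of the $L^2$-exponent $c_0 = (5\alpha-4)/(2\alpha)$, which remains valid as long as $\alpha\geq 4/5$; at the endpoint $\alpha = 4/5$ one has $c_0 = 0$ and the inequality reduces to the scale-critical embedding $X^{\alpha/2}\hookrightarrow L^3$.
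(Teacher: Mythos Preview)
Your proposal is correct and aligned with the paper's treatment: the paper gives no proof but simply cites Lemma~2.1 of \cite{BLT} and remarks that inspection shows the restriction $1\le\alpha\le 2$ there can be relaxed to $\alpha\ge \tfrac45$. You have carried out that inspection explicitly, reconstructing the anisotropic Gagliardo--Nirenberg argument and pinpointing that the only place the lower bound on $\alpha$ enters is the non-negativity of the $L^2$ exponent $c_0=(5\alpha-4)/(2\alpha)$.
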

Lemma \ref{BLT} implies obviously the embedding $X^{\frac{\alpha}2} \hookrightarrow L^3(\R^2)$ if $\alpha \geq\frac{4}{5}.$
 
The energy critical value $\frac{4}{5}$ of $\alpha$ is obviously related to the non existence of localized solitary waves. Actually, one has by Pohozaev type arguments  \cite{LPS3}:
\begin{proposition}\label{nonex}
Assume that $0<\alpha\leq \frac{4}{5} $ when $\kappa =-1$ or that $\alpha$ is arbitrary when $\kappa =1.$ Then \eqref{fKP} does not possess non trivial  solitary waves in the space $X^{\frac{\alpha}2} \cap L^3(\R^2).$
\end{proposition}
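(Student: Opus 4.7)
The plan is to derive three Pohozaev-type scalar identities for a solitary wave profile and to combine them into constraints on the invariants that, under the stated hypotheses on $(\alpha,\kappa)$, force the profile to vanish.

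First I would write a solitary wave as $u(x,y,t)=\varphi(x-ct,y)$ with speed $c>0$ and $\varphi\in X^{\alpha/2}\cap L^3(\R^2)$. Substituting into \eqref{fKP} and integrating once in $x$ (the constant of integration vanishes by decay of $\varphi$), the profile satisfies
\begin{equation*}
c\varphi-\tfrac12\varphi^2+D_x^\alpha\varphi-\kappa\,\partial_x^{-2}\varphi_{yy}=0.
\end{equation*}
Setting $I_1=\|\varphi\|_{L^2}^2$, $I_2=\int\varphi^3$, $I_3=\|D_x^{\alpha/2}\varphi\|_{L^2}^2$, $I_4=\|\partial_x^{-1}\varphi_y\|_{L^2}^2$, the profile $\varphi$ is a critical point of the action $F(\varphi)=cI_1+I_3-\kappa I_4-\tfrac13 I_2$. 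I would extract three scalar identities: the first from testing the profile equation against $\varphi$ (equivalently $\langle F'(\varphi),\varphi\rangle=0$), the second by computing $\partial_\lambda|_{\lambda=1}F(\varphi(\lambda x,y))$, the third by computing $\partial_\mu|_{\mu=1}F(\varphi(x,\mu y))$. Tracking the anisotropic scaling of $D_x^{\alpha/2}$ (weight $\lambda^{\alpha-1}\mu^{-1}$) and of $\partial_x^{-1}\partial_y$ (weight $\lambda^{-3}\mu$), I obtain
\begin{align*}
cI_1+I_3-\kappa I_4-\tfrac12 I_2 &=0,\\
-cI_1+(\alpha-1)I_3+3\kappa I_4+\tfrac13 I_2 &=0,\\
-cI_1-I_3-\kappa I_4+\tfrac13 I_2 &=0.
\end{align*}

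Linear elimination of $I_2$ reduces this system to the two key relations
\begin{equation*}
\alpha I_3+4\kappa I_4=0,\qquad cI_1=\tfrac{5\alpha-4}{4}\,I_3.
\end{equation*}
From here the dichotomy is immediate. If $\kappa=1$, the first relation forces $I_3=I_4=0$ since $\alpha>0$ and $I_3,I_4\ge 0$, and the second then gives $I_1=0$, so $\varphi\equiv 0$. If $\kappa=-1$ and $\alpha\le 4/5$, the coefficient $\tfrac{5\alpha-4}{4}$ is non-positive; combined with $c>0$ and $I_1,I_3\ge 0$, the second relation yields $I_1=0$, whence $\varphi\equiv 0$ again. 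Observe that the critical threshold $\alpha=4/5$ appears exactly as the value at which the coefficient changes sign, which matches the Gagliardo–Nirenberg exponent in Lemma~\ref{BLT}.

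The main obstacle is the rigorous justification of the Pohozaev identities, since the dilation multipliers $x\varphi_x+\tfrac12\varphi$ and $y\varphi_y+\tfrac12\varphi$ are not a priori in $X^{\alpha/2}\cap L^3$ and the nonlocal operators $D_x^\alpha$ and $\partial_x^{-2}\partial_y^2$ do not commute with the weights $x$ and $y$. I would handle this by regularization: replace $\varphi$ by a Fourier-truncated smooth approximation (or introduce a smooth cutoff $\chi(x/R)$, $\chi(y/R)$ in the multipliers), derive the identities for the regularization up to commutator error terms, and pass to the limit $R\to\infty$ using the $L^2$ decay of $\varphi$, $D_x^{\alpha/2}\varphi$ and $\partial_x^{-1}\varphi_y$. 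This is a direct adaptation of the Pohozaev argument for fKdV carried out in \cite{LPS3}, to which the authors explicitly refer.
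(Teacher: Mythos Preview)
Your proposal is correct and follows precisely the approach indicated in the paper, which gives no proof beyond the phrase ``by Pohozaev type arguments \cite{LPS3}''; you have supplied exactly those arguments, and your three dilation identities and the resulting relations $\alpha I_3+4\kappa I_4=0$, $cI_1=\tfrac{5\alpha-4}{4}I_3$ are computed correctly. The only point worth flagging is that your assumption $c>0$ is implicit rather than derived, but this is the standard convention for KP-type solitary waves and is the setting of \cite{LPS3}.
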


\vspace{0.3cm}
The paper is organized as follows. In a first section we review some general facts on the linear and nonlinear Cauchy problem for equations such as \eqref{fKP}. They concern properties of the linear group, {\it eg} a local smoothing effect for the fKP-II group, the so-called constraint problem for the linear groups (and its extension to the nonlinear problem)    and various results on the nonlinear Cauchy problem, including the existence of global weak solutions via the conservation of energy in the fKP-I case. 

The next sections are the core of our work. We first prove  that the Cauchy problem for the fKP-I equation and for the fKP-II equation when $\alpha <\frac{4}{3}$ cannot be solved by a Picard iterative scheme based on the Duhamel formula, extending the result in \cite{MST} for the usual KP-I equation. Thus  the fKP-I equation when $\alpha \leq 2$ and  the fKP-II equation when $\alpha <\frac{4}{3}$ are {\it quasilinear} while the fKP-II equation when $\alpha>\frac{4}{3}$ is {\it semilinear}. This illustrates the subtility of the distinction between \lq\lq semilinear\rq\rq \, and \lq\lq quasilinear\rq\rq \, in the context of KP type equations since it has been proven in \cite{ST} that the fifth order ($\alpha =4$) KP-I equation is semilinear, in particular  the Cauchy problem can be solved by a Bourgain method as in \cite{Bou} or \cite{Ta-Tz}.

 Then in Sections 4 and 5,  we state and prove our main existence result, namely that for any $\alpha >0$ one can solve the local Cauchy problem for \eqref{fKP} for initial data in a space strictly larger than the \lq\lq hyperbolic\rq\rq \, space $H^{2^+}(\R^2),$ actually in $X^s(\R^2),\; s>s_\alpha=2-\frac{\alpha}{4}.$  This is the first general result of this type for this kind of equations, in particular for the BO-KP equation ($\alpha =1$). It also applies to nonhomogeneous symbols in particular  to the ILW-KP equation or to the KP version of the Whitham equation with surface tension.

Note however that Hadac (\cite{Ha}) obtained the local well-posedness of the fKP-II equation in the $L^2-$ subcritical case $\alpha>\frac{4}{3}$ for initial data in the anisotropic Sobolev space $H^{s_1, s_2}(\R^2),\;s_1>\max (1-\frac{3}{4}\alpha,\frac{1}{4}-\frac{3}{8}\alpha),\;s_2\geq 0.$

The main ingredient in the well-posedness analysis is a refined Strichartz estimate in the same spirit of \cite{K} (see also \cite{KoTz3}).

\vspace{0.5cm}
\noindent{\bf Notations.} We will denote $\|\cdot\|_p$ the norm in the Lebesgue space $L^p(\R^2),\; 1\leq p\leq \infty$ and $\|\cdot\|_{H^s}$ the norm in the Sobolev space $H^s(\R^2),\; s\in \R.$ We will denote $\widehat {f}$ or $\mathcal F(f)$ the Fourier transform of a tempered distribution $f.$ For any $s\in \R,$ we define $D_x^s f$ and $D_y^sf$ by  Fourier transform 
$$\widehat{D_x^s f}(\xi,\eta)=|\xi|^s \hat{f}(\xi,\eta) \quad \text{and} \quad \widehat{D_y^s f}(\xi,\eta)=|\eta|^s \hat{f}(\xi,\eta) \, .$$ 

Finally, we set  $$H^s_{-1}(\R^2)=\lbrace f\in H^s(\R^2) \ : \ {\mathcal F}^{-1}\big(\frac{\widehat{f}(\xi,\eta)}{\xi}\big)\in H^s(\R^2)\rbrace.$$

For any $s \ge 0$, we define the space $X^s(\mathbb R^2)$, which is well-adapted to the equation  \eqref{fKP}, by the norm 
\begin{equation} \label{Xnorm}
\| f\|_{X^s} := \left(\|J^s_x f\|_{L^2(\R^2)}^2+\|\partial^{-1}_x\partial_y f\|_{L^2(\R^2)}^2\right)^{\frac12}
\end{equation}
and
$$
X^{\infty}(\R^2)=\underset{s\ge0}{\cap}X^s(\R^2).
$$

Finally, we define the space $W_x^{1,\infty}(\R^2)$, by the norm
\begin{equation}\label{w1infty}
\|f\|_{W^{1,+\infty}_x}=\|f\|_{L^{\infty}_{xy}}+\|\partial_xf\|_{L^{\infty}_{xy}}.
\end{equation}

\section{Basic results on the Cauchy problem}
\subsection{Basic facts on the linear problem}

The linear part in \eqref{fKP} defines, for any $\alpha>-1$,  a unitary group $S_\alpha(t)$ in $L^2$ and all $H^s(\R^2)$  Sobolev spaces, unitarily equivalent via Fourier transform to the Fourier multiplier
$$
e^{it(|\xi|^\alpha\xi-\kappa \frac{\eta^2}{\xi})}.
$$
On the other hand, though the solutions are continuous in time they will not be in general differentiable in time, even for smooth (say Schwartz class) initial data. Actually a necessary and sufficient condition on the initial data $u_0$ for $u_t$ to be a bounded function of $t$ with values in $L^2(\R^2)$ is that 
$$\xi^{-1}\eta \widehat{u_0}\in L^2(\R^2).$$
This fact is linked to the so-called \lq\lq constraint problem\rq\rq \,  (see subsection 2.4 below) and was already observed for the "linear diffractive pulse equation"
$$2u_{tx}=\Delta_y u$$
in \cite{AR}.

Note also that the linear part of the full dispersion KP equation \eqref{FDbis} does not suffer from this shortcoming (see \cite{LS}) and actually 
this was one of the reasons to introduce this kind of equations.
\subsection{An elementary result}
Viewing \eqref{fKP} as a skew-adjoint perturbation of the Burgers equation, one easily establishes the elementary local well-posedness result (see \cite {IN, S} and \cite{MST} for a simpler proof):
\begin{theorem}\label{triv}
Let $u_0\in H^s_{-1}(\R^2), s>2.$ Then there exist  $T=T(\|u_0\|_{H^s})>0$ and a unique solution  $u\in C([0,T];H_{-1}^{s}(\R^2)), \, u_t \in C([0,T];H^{s-3}(\R^2))$. \\
Furthermore, the map $\; u_0 \mapsto u $ is continuous from 
 $\;H^s_{-1}(\R^2)$ to\newline $\; C([0,T];H_{-1}^{s}(\R^2)) $. Moreover $\|u(\cdot,t)\|_{L^2}$ and $H_\alpha(u(\cdot,t))$ are conserved on $[0,T].$
\end{theorem}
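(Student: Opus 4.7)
The plan is to treat \eqref{fKP} as a skew-adjoint perturbation of Burgers and run the classical Kato-type quasilinear scheme on the energy
$$E^s(u)^2 := \|u\|_{H^s}^2 + \|\p_x^{-1}u\|_{H^s}^2,$$
which is equivalent to the $H^s_{-1}$ norm. I would first regularize the Cauchy problem (for instance by Friedrichs mollifiers in $x$ and $y$ or a Bona--Smith frequency truncation) so that, for each small parameter $\ep>0$, the regularized equation is an ODE in $H^s$ having a unique smooth-in-time solution $u^\ep$ on a maximal interval.

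The core of the argument is a uniform-in-$\ep$ a priori estimate. Apply $J^s_x$ (or $J^s$) to \eqref{fKP}, pair with $J^s u$, and notice that both nonlocal operators $D_x^\alpha\p_x$ and $\p_x^{-1}\p_y^2$ have purely imaginary Fourier symbols, hence are skew-adjoint and contribute nothing. The nonlinearity is handled by the commutator identity
$$\int J^s(uu_x)\,J^su\,dxdy=\int\bigl([J^s,u]u_x\bigr)J^su\,dxdy-\tfrac12\int u_x(J^s u)^2\,dxdy,$$
together with the Kato--Ponce estimate $\|[J^s,u]u_x\|_{L^2}\lesssim \|\p_x u\|_{L^\infty}\|J^su\|_{L^2}$. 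Since $s>2$, Sobolev embedding in $\R^2$ gives $\|\p_xu\|_{L^\infty}\lesssim\|u\|_{H^s}$, so
$$\tfrac{d}{dt}\|u\|_{H^s}^2\lesssim \|u\|_{H^s}^3.$$
For the constraint piece, set $v=\p_x^{-1}u$; integrating the equation in $x$ (using the decay of the data/solution to kill the constant in $y,t$) yields
$$v_t+\tfrac12 u^2-D_x^\alpha u+\kappa\p_x^{-1}\p_y^2 v=0.$$
Pairing $J^s v$ with $J^s$ of this equation, the two nonlocal terms are again skew-adjoint and the Moser-type estimate $\|J^s(u^2)\|_{L^2}\lesssim\|u\|_{L^\infty}\|J^s u\|_{L^2}$ gives
$$\tfrac{d}{dt}\|v\|_{H^s}^2\lesssim \|u\|_{H^s}^2\|v\|_{H^s}.$$
Adding the two estimates yields $\tfrac{d}{dt}E^s(u^\ep)^2\lesssim E^s(u^\ep)^3$, producing a uniform lifespan $T=T(\|u_0\|_{H^s_{-1}})>0$ and a uniform bound on $E^s(u^\ep)$ on $[0,T]$.

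Passing to the limit $\ep\to 0$ via weak-$*$ compactness and interpolation yields a solution $u\in L^\infty([0,T];H^s_{-1})$. For uniqueness and continuity in time, set $w=u_1-u_2$ with two solutions; the difference solves a linear transport equation with $D_x^\alpha\p_x$ and $\p_x^{-1}\p_y^2$ (skew-adjoint) perturbations and source $\tfrac12 ((u_1+u_2)w)_x$. A straightforward $L^2$ energy estimate gives
$$\tfrac{d}{dt}\|w\|_{L^2}^2\lesssim \|\p_x(u_1+u_2)\|_{L^\infty}\|w\|_{L^2}^2,$$
so Gronwall gives uniqueness; the same scheme applied to $w=u^\ep-u^{\ep'}$, combined with a Bona--Smith-style argument, upgrades weak to strong continuity $u\in C([0,T];H^s_{-1})$ and gives continuous dependence on the data. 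The time regularity $u_t\in C([0,T];H^{s-3})$ is then read off directly from \eqref{fKP}, since $uu_x\in H^{s-1}$ by Moser, $D_x^\alpha u_x\in H^{s-1-\alpha}\supset H^{s-3}$ for $\alpha\le 2$, and $\p_x^{-1}u_{yy}=\p_y^2v\in H^{s-2}$. Finally, the conservation of $\|u(\cdot,t)\|_{L^2}^2$ and $H_\alpha(u(\cdot,t))$ follows from the classical multipliers $u$ and $-D_x^\alpha u+\kappa\p_x^{-2}u_{yy}-\tfrac12 u^2$ respectively, using the skew-adjointness of the linear part.

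The main obstacle I expect is the constraint component: $\p_x^{-1}u$ is delicate because of the zero-frequency behaviour in $\xi$, and one must verify that the regularization preserves the constraint (or that integrating in $x$ is legitimate), so that the equation satisfied by $v=\p_x^{-1}u$ is meaningful at every step and the energy identity for $\|v\|_{H^s}^2$ closes without picking up an uncontrolled boundary or mean-value term in $y$.
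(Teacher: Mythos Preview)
Your proposal is correct and follows essentially the same route as the paper: a standard compactness (energy) method using the skew-adjointness of $D_x^\alpha\partial_x$ and $\partial_x^{-1}\partial_y^2$ and the Kato--Ponce commutator, followed by the Bona--Smith trick for strong continuity and continuous dependence. The paper's own proof is in fact only a two-line sketch pointing to exactly these standard ingredients, so you have supplied considerably more detail than the authors do.

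The one place where the paper adds a specific technical device is the rigorous justification of the conservation of $H_\alpha$, and it addresses precisely the obstacle you flag at the end. Rather than integrating the equation in $x$ and working with $v=\partial_x^{-1}u$ (which, as you note, requires care because $\partial_x^{-1}\partial_y^2 v=\partial_x^{-2}u_{yy}$ is not a priori controlled in $H^s_{-1}$), the paper invokes Molinet's \emph{exterior} regularization: one convolves with a smooth cutoff $u_\epsilon$ whose Fourier transform is the characteristic function of the box $\{\epsilon<|\xi|<1/\epsilon,\ \epsilon<|\eta|<1/\epsilon\}$. On such regularized functions every negative power of $\xi$ is a bounded Fourier multiplier, so the multiplier computation for $H_\alpha$ (your multiplier $-D_x^\alpha u+\kappa\partial_x^{-2}u_{yy}-\tfrac12 u^2$) becomes fully rigorous; one then passes to the limit $\epsilon\to 0$ using $\|u_\epsilon\ast v-v\|_{L^p}\to 0$ for $v\in H^\infty$. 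This is a cleaner way around the zero-$\xi$-frequency issue than trying to justify the $v$-equation directly, and it is worth incorporating into your write-up.
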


\begin{proof}
The proof of local well-posedness is obtained  by a standard compactness method followed by the Bona-Smith trick for the continuity properties.
To justify rigorously the conservation of the Hamiltonian one use as in \cite{Mol} an exterior regularization of the 
KP equation by a sequence of smooth functions $ u_\epsilon $ that cut the 
low frequencies. Namely, one introduces for $ \epsilon>0 $ the 
function $ u_\epsilon $ defined by
\begin{equation}
\widehat{u}_{\epsilon}(\xi,\eta):=
\left\{
\begin{array}{ll}
1 & \hbox{ if }
\epsilon < |\xi|<\frac{1}{\epsilon}\hbox{ and }
 \epsilon < |\eta |<\frac{1}{\epsilon}, \\
0 & \hbox{otherwise.}
\end{array}
\right.
\label{b2}
\end{equation}
Obviously $ u_\epsilon\in H^{s_1,s_2}_{-k} $ for any 
$ (s_1,s_2)\in \R_{+}\times\R_{+},\; k\in \N $.
Furthermore,
\begin{equation}
\label{b4}
\lim_{\epsilon\to 0} \|u_\epsilon\ast v-v\|_{L^p}=0,  \quad 2\le p\le \infty,
\quad \forall v\in H^{\infty}(\R^2).
\end{equation}

\end{proof}
\subsection{Local smoothing for fKP-II equations}

It is well-known (\cite {S}) that the  linear   KP-II equation ($\alpha=2, \kappa=1$) displays a local smoothing property, also shared by smooth solutions of the nonlinear problem (\cite{S}, Theorem 3.1). On the other hand, Ginibre and Velo \cite{GV2} have proven a local smoothing property for the fractional Korteweg-de Vries equation \eqref{fKdV}
when $\alpha>\frac{1}{2}$ leading to the global existence of weak $L^2$ or finite energy solutions. Combining the two approaches we can prove a local smoothing property for the linear fKP-II equation

\begin{equation}\label{lfKP-II}
u_t-D_x^\alpha u_x+\partial_x^{-1}u_{yy}=0.
\end{equation}
One gets (compare with Proposition 2.6 in \cite{S}):
\begin{proposition}
Let $\alpha >\frac{1}{2}$ and $s\geq 0, u_0\in H_{-1}^s(\R^2).$ Then the solution $u$ of \eqref{lfKP-II} satisfies for any $R>0$ and $T>0$ 
$$D_x^{\frac{\alpha}2}D_x^{s_1}D_y^{s_2}u,\quad D_y^s \partial_x^{-1}u_y\in L^2((-T,T)\times (-R,R)\times \R),\quad s_1+s_2=s.$$
\end{proposition}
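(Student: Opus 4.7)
The proof follows a weighted--energy (Kato--type) estimate for the linear equation, combining the Ginibre--Velo smoothing \cite{GV2} for the fractional $x$--dispersion with the transverse smoothing for KP--II used in \cite{S}. The first step is to reduce to $s=0$. Since $D_x^{s_1}D_y^{s_2}$ commutes with the linear operator $\partial_t-D_x^\alpha\partial_x+\partial_x^{-1}\partial_y^2$, the function $w:=D_x^{s_1}D_y^{s_2}u$ still solves \eqref{lfKP-II} with $w\big|_{t=0}\in L^2(\R^2)$ (the pointwise bound $|\xi|^{s_1}|\eta|^{s_2}\le\langle(\xi,\eta)\rangle^s$ gives $\|w(0)\|_{L^2}\le\|u_0\|_{H^s}$). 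The bound on $D_x^{\alpha/2}D_x^{s_1}D_y^{s_2}u=D_x^{\alpha/2}w$ is then the $s=0$ estimate applied to $w$; taking instead $s_1=0$, $s_2=s$, the identity $\partial_x^{-1}(D_y^su)_y=D_y^s\partial_x^{-1}u_y$ yields the second claimed bound.

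For the basic smoothing at $s=0$, fix a smooth nondecreasing weight $\phi:\R\to[0,M]$ with $\phi'(x)\ge 1$ on $[-R,R]$ and let $E(t):=\int_{\R^2}\phi(x)u^2\,dxdy$. Substituting $u_t=D_x^\alpha u_x-\partial_x^{-1}u_{yy}$ and using the anti-self-adjointness of $D_x^\alpha\partial_x$,
\begin{equation*}
\tfrac{1}{2}\tfrac{d}{dt}E(t)=-\tfrac{1}{2}\int u\,[D_x^\alpha\partial_x,\phi]\,u\,dxdy-\int\phi\,u\,\partial_x^{-1}u_{yy}\,dxdy.
\end{equation*}
The transverse integral is handled by one integration by parts in $y$, the substitution $v:=\partial_x^{-1}u_y$ (so that $v_x=u_y$), and one integration by parts in $x$:
\begin{equation*}
\int\phi\,u\,\partial_x^{-1}u_{yy}\,dxdy=-\int\phi\,u_y\,v\,dxdy=-\tfrac{1}{2}\int\phi\,(v^2)_x\,dxdy=\tfrac{1}{2}\int\phi'(x)(\partial_x^{-1}u_y)^2\,dxdy.
\end{equation*}
For the dispersive commutator, symbol calculus gives $[D_x^\alpha\partial_x,\phi]=(\alpha+1)\phi'(x)D_x^\alpha+\Lambda$, with $\Lambda$ of order $\alpha-1$; symmetrizing $\phi'(x)D_x^\alpha$ via $D_x^\alpha=D_x^{\alpha/2}D_x^{\alpha/2}$ and moving one factor across $\phi'$ produces the positive principal contribution $(\alpha+1)\int\phi'(x)|D_x^{\alpha/2}u|^2\,dxdy$ plus remainder terms.

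Gathering the identities yields
\begin{equation*}
\tfrac{d}{dt}E(t)+(\alpha+1)\int\phi'|D_x^{\alpha/2}u|^2\,dxdy+\int\phi'(\partial_x^{-1}u_y)^2\,dxdy=R(u),
\end{equation*}
where $R(u)$ collects the sub-leading commutator contributions. Conservation of the $L^2$ norm gives $E(t)\le M\|u_0\|_{L^2}^2$ uniformly in $t$; integrating on $(-T,T)$ and using $\phi'\ge 1$ on $[-R,R]$ gives
\begin{equation*}
\int_{-T}^T\!\int_\R\!\int_{-R}^R\bigl(|D_x^{\alpha/2}u|^2+(\partial_x^{-1}u_y)^2\bigr)dxdydt\lesssim (M+T)\|u_0\|_{L^2}^2,
\end{equation*}
provided $\int_{-T}^T|R(u)|\,dt\lesssim(1+T)\|u_0\|_{L^2}^2$. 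The reduction of the first step then promotes this to arbitrary $(s_1,s_2)$ with $s_1+s_2=s$.

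The main obstacle is precisely the uniform control $|R(u)|\lesssim\|u\|_{L^2}^2$ of the sub-leading commutator terms: both $\Lambda$ and the symmetrization commutator $[D_x^{\alpha/2},\phi']$ arise from the rough Fourier multiplier $|\xi|^\alpha$, whose symbol has only limited regularity at $\xi=0$. The hypothesis $\alpha>\tfrac{1}{2}$ enters exactly here, in the same way as in \cite{GV2}: for such $\alpha$, a Calder\'on--Coifman--Meyer type commutator bound absorbs the remainders into $\|u\|_{L^2}^2$, closing the energy inequality and completing the proof.
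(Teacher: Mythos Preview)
Your proposal is correct and follows essentially the same route as the paper: reduction to $s=0$ by applying the argument to $D_x^{s_1}D_y^{s_2}u$, a weighted $L^2$ energy identity with a monotone cutoff $\phi$, the same integration-by-parts treatment of the transverse term via $v=\partial_x^{-1}u_y$, and the smoothing extracted from the commutator $[D_x^\alpha\partial_x,\phi]$.

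The only notable difference is packaging. The paper invokes the Ginibre--Velo commutator lemma (Proposition~2.1 of \cite{GV}) directly in its already symmetrized form
\[
[-\partial_xD_x^\alpha,p]=(1+\alpha)D_x^{\alpha/2}p'D_x^{\alpha/2}+R_\alpha(p),\qquad R_\alpha(p)\ \text{bounded on }L^2,
\]
whereas you arrive at the same object in two steps (Leibniz expansion $(\alpha+1)\phi'D_x^\alpha+\Lambda$, then symmetrization by moving one factor $D_x^{\alpha/2}$ across $\phi'$) and absorb the residuals via a Calder\'on--Coifman--Meyer type bound. One small caveat: the paper states that commutator lemma for all $0<\alpha<1$, so your attribution of the threshold $\alpha>\tfrac12$ specifically to the $L^2$--boundedness of the remainders is not quite how the paper organizes it; the hypothesis $\alpha>\tfrac12$ is inherited from the setting of \cite{GV2} rather than forced by the commutator estimate itself.
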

\begin{proof}
It will be convenient to write \eqref{lfKP-II} as

\begin{equation}
    \label{lfKPbis}
    \left\lbrace
   \begin{array}{lcl}
   	 u_t -D_x^\alpha u_x+v_y=0,\\
	 u_y=v_x.
    \end{array}\right.
\end{equation}

We will consider only  $s=0,$ the general case follows by applying the procedure below to $D_1^{s_1}D_2^{s_2}u.$ The following computations can be justified by smoothing the initial data and passing to the limit.

Let $p\in \mathcal C^ {\infty}(\R_x,\R)$ with $p'\geq 0$ and $p'$ compactly supported. We multiply $\eqref{lfKPbis}_1$ by $pu,$  integrate over $\R^2$ to get after several integrations by parts in the last term

\begin{equation}\label{triv1}
\frac{1}{2}\frac{d}{dt}\int_{\R^2} pu^2 dx dy-\int_{\R^2} (D_x^\alpha u_x) p u \,dx dy+\frac{1}{2}\int_{\R^2} p_xv^2 dx dy=0.
\end{equation}

In order to deal with the second term in \eqref{triv1} we use a commutator lemma which is a consequence of Proposition 2.1 in \cite{GV}.

\begin{lemma}\label{comm}\cite{GV}

For any $0<\alpha<1,$

$$[-\partial_x D^\alpha_x,p]=(1+\alpha) D^{\frac{\alpha}2}_x p'D_x^{\frac{\alpha}2} +R_\alpha(p)$$

where $R_\alpha(p)$ is bounded in $L^2(\R).$
\end{lemma}

Since $p$ does not depend on $y$,  $R_\alpha(p)$ is also  bounded in $L^2(\R^2)$ and we deduce from
 Lemma \ref{comm}

\begin{equation}\label{triv2}
\frac{1}{2}\frac{d}{dt}\int_{\R^2} pu^2 \,dx dy+\int_{\R^2}p_x |D_x^{\frac{\alpha}2} u|^2  dx dy+\frac{1}{2}\int_{\R^2} p_xv^2 dx dy\leq C\int_{\R^2}u^2\,dxdy.
\end{equation}
\end{proof}
\subsection{The constraint problem}
The singular term $\partial_x^{-1}u_{yy}$ in KP-like equations induces a (zero mass) constraint
on the solution that has been studied in detail in \cite{MST1}. For the sake of completeness we recall
here the results in \cite{MST1} related to the fKP equations \eqref{fKP}.
We consider first the linear equation.

\begin{equation}\label{lfKP}
u_t-D^\alpha_x u_x+\kappa\partial_x^{-1}u_{yy}=0
\end{equation}

 We denote by $G_\alpha$\footnote{The value of $ \kappa$ is irrelevant here and will take $\kappa =1$ throughout the proof.} the fundamental solution
\[
G_\alpha(x,y,t)={\mathcal F}^{-1}_{(\xi,\eta)\rightarrow (x,y)}
\bigl[
e^{it(\xi|\xi|^{\alpha}-\kappa\eta^2/\xi)}
\bigr].
\]
A~priori, we have only that $G_\alpha(\cdot,\cdot,t)\in {\mathcal S}'(\R^2)$.
Actually, for $t\neq 0$, $G_\alpha(\cdot,\cdot,t)$ has a very particular form described in the next theorem which is essentially contained in  \cite{MST1}.

\begin{theorem}\label{thm2.1} 
Suppose that $\alpha>0$ in\/ {\rm(\ref{lfKP})}.

Then  for $t\neq 0$, there exists 

$$A_\alpha(\cdot,\cdot,t)\in C(\R^2)\cap L^{\infty}(\R^2)\cap C^{1}_{x}(\R^2)$$

($C^{1}_{x}(\R^2)$ denotes the space of continuous functions on\/ $ \R^2 $ which have
 a continuous derivative with respect to the first variable) such that

$$
G_\alpha(x,y,t)=\frac{\partial A_\alpha}{\partial x}(x,y,t).
$$

Moreover, when $\alpha>\frac{1}{2},$ $G_\alpha(\cdot, \cdot, t)$ is for $t\neq 0$ a continuous function in $\R^2$, bounded when $\alpha \geq 2.$ \footnote{Contrary to what is claimed (but not used!)  in \cite{MST1}, $G_\alpha$ is not a bounded function when $\alpha >1/2$.}
In addition, for $t\neq 0$, $y\in\R$, $\varphi\in L^1(\R^2)$,

$$\lim_{|x|\rightarrow \infty}(A_\alpha\star\varphi)(x,y,t)=0.$$

As a consequence, the solution of\/ {\rm(\ref{lfKP})} with data $\varphi\in L^1(\R^2)$
is given by 

$$u(\cdot,\cdot,t)\equiv U_\alpha(t)\varphi:=G_\alpha\star \varphi$$

and

$$u(\cdot,\cdot,t)=\frac{\partial}{\partial x}\bigl( A_\alpha\star \varphi\bigr).$$

One therefore has

$$\int_{-\infty}^{\infty}u(x,y,t)\, dx =0\quad \forall y\in\R,\,\,\, \forall t\neq
0\, $$

in the sense of generalized Riemann integrals.
\end{theorem}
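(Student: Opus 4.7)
The strategy is to reduce the two-dimensional oscillatory integral defining $G_\alpha$ to a one-dimensional one by performing a Fresnel integration in $\eta$, and then to analyse the resulting one-dimensional integral by a combination of integration by parts and van der Corput's lemma, exploiting the growth of the dispersive term $t\xi|\xi|^\alpha$ when $\alpha>0$.

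Concretely, completing the square in $\eta$ in the phase $y\eta-t\kappa\eta^2/\xi$ and using the Fresnel formula (justified via an $\epsilon$-regularisation $e^{-\epsilon\eta^2}$, $\epsilon\to 0^+$), one obtains for $\xi\neq 0$
\[\int_\R e^{iy\eta-it\kappa\eta^2/\xi}\,d\eta=\sqrt{\pi|\xi|/|t|}\,e^{-i\pi\sgn(t\kappa\xi)/4}\,e^{iy^2\xi/(4t\kappa)},\]
and therefore, setting $z:=x+y^2/(4t\kappa)$,
\[G_\alpha(x,y,t)=\frac{C}{\sqrt{|t|}}\int_\R|\xi|^{1/2}\,e^{-i\pi\sgn(t\kappa\xi)/4}\,e^{i\phi(\xi)}\,d\xi,\quad\phi(\xi)=z\xi+t\xi|\xi|^\alpha,\]
the right-hand side being an improper Riemann integral. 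I then define $A_\alpha$ by the same formula with $|\xi|^{1/2}$ replaced by $\sgn(\xi)|\xi|^{-1/2}/i$, which in Fourier variables corresponds to the multiplier $(i\xi)^{-1}e^{it(\xi|\xi|^\alpha-\kappa\eta^2/\xi)}$; the $|\xi|^{-1/2}$ singularity at the origin is harmless as it is locally integrable.

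For the properties of $A_\alpha$ and $G_\alpha$ I would estimate these one-dimensional integrals uniformly in $z$. The phase derivative $\phi'(\xi)=z+t(\alpha+1)|\xi|^\alpha\sgn(\xi)$ has at most one zero $\xi_0$ with $|\xi_0|\sim|z/t|^{1/\alpha}$. Away from a dyadic neighbourhood of $\xi_0$, one integration by parts gains a factor $1/\phi'\sim|\xi|^{-\alpha}$: for $A_\alpha$ the new integrand behaves like $|\xi|^{-1/2-\alpha}$, integrable at infinity for every $\alpha>0$; for $G_\alpha$ the boundary term is $\sim|\xi|^{1/2-\alpha}$ and the new integrand $\sim|\xi|^{-1/2-\alpha}$, both controlled at infinity precisely when $\alpha>1/2$. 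On a shrinking neighbourhood of $\xi_0$, van der Corput's lemma
\[\left|\int f(\xi)e^{i\phi(\xi)}\,d\xi\right|\lesssim|\phi''|_{\min}^{-1/2}\bigl(\|f\|_\infty+\|f'\|_{L^1}\bigr),\]
applied with $|\phi''(\xi_0)|\sim|t||\xi_0|^{\alpha-1}$, contributes to $A_\alpha$ an amount of order $|t|^{-1/2}|\xi_0|^{-\alpha/2}$, uniformly bounded in $z$, whereas for $G_\alpha$ it contributes $|t|^{-1/2}|\xi_0|^{1-\alpha/2}\sim|t|^{-1/2}|z|^{(2-\alpha)/(2\alpha)}$, which is uniformly bounded in $z$ exactly when $\alpha\geq 2$, explaining both thresholds of the statement. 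Continuity of $A_\alpha$ (and of $G_\alpha$ when $\alpha>1/2$) in $(x,y)$ then follows from dominated convergence on each piece of the decomposition, and differentiating under the integral sign, justified by the same IBP bounds, yields $\partial_x A_\alpha=G_\alpha$ pointwise.

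Finally, to prove $(A_\alpha\star\varphi)(x,y,t)\to 0$ as $|x|\to\infty$, I would argue by density in $L^1$. For $\varphi\in\mathcal{S}(\R^2)$ it is enough to show that $A_\alpha(x,y,t)\to 0$ as $|x|\to\infty$ uniformly for $y$ in any bounded set; since $A_\alpha$ depends on $(x,y)$ only through $z=x+y^2/(4t\kappa)$, this reduces to showing that the defining oscillatory integral tends to zero as $|z|\to\infty$. On $|\xi|\leq 1$ this is the Riemann--Lebesgue lemma, while on $|\xi|\geq 1$ the same van der Corput estimate gives a contribution $\lesssim|z|^{-1/2}$ at the stationary point and IBP provides decay of arbitrary polynomial order elsewhere. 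For general $\varphi\in L^1$ the conclusion follows from the Schwartz case together with Young's inequality $\|A_\alpha\star(\varphi-\varphi_n)\|_\infty\leq\|A_\alpha\|_\infty\|\varphi-\varphi_n\|_{L^1}$. The zero-mass constraint is then immediate from the fundamental theorem of calculus applied to $u=\partial_x(A_\alpha\star\varphi)$: $\int_{-R}^R u(x,y,t)\,dx=(A_\alpha\star\varphi)(R,y,t)-(A_\alpha\star\varphi)(-R,y,t)\to 0$ as $R\to\infty$. In my view the principal technical obstacle is to obtain a stationary-phase estimate near $\xi_0$ that is uniform as $|\xi_0|\to\infty$, which is precisely what dictates the threshold values $\alpha=1/2$ and $\alpha=2$.
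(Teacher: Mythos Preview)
Your approach is correct and is precisely the one the paper has in mind: the paper does not give its own proof of Theorem~\ref{thm2.1} but states that it ``is essentially contained in \cite{MST1}'', and the Fresnel reduction in $\eta$ you perform is exactly the computation the authors carry out later in the proof of Lemma~\ref{decay} (see \eqref{oi1}--\eqref{oi2}). Your identification of the thresholds $\alpha>0$, $\alpha>\tfrac12$, $\alpha\ge 2$ via the behaviour of the one-dimensional integral at the stationary point and at infinity is the correct mechanism.

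One minor computational slip: the derivative of $\xi|\xi|^{\alpha}$ is $(\alpha+1)|\xi|^{\alpha}$, \emph{without} the factor $\sgn(\xi)$ you wrote, so $\phi'(\xi)=z+t(\alpha+1)|\xi|^{\alpha}$ is even in $\xi$. Consequently there are either zero or two (symmetric) stationary points $\pm\xi_0$, not ``at most one''. This does not affect your argument---you simply apply van der Corput near each of them---but it is worth correcting. Also, in the density step for $\lim_{|x|\to\infty}(A_\alpha\star\varphi)(x,y,t)=0$, it is cleaner to take $\varphi\in C_c^\infty(\R^2)$ rather than $\mathcal S$: then the convolution integral is over a compact set in $(x',y')$, so for fixed $y$ the variable $z=(x-x')+(y-y')^2/(4t\kappa)$ goes to $\pm\infty$ uniformly as $|x|\to\infty$, and the pointwise decay of $A_\alpha$ in $|z|$ suffices directly.
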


\begin{remark}
It is worth noticing that the result of Theorem\/~{\rm\ref{thm2.1}} is related to the
infinite speed of propagation of the KP free evolutions.

The proof of Theorem\/~{\rm\ref{thm2.1}} contains implicitly, when $\alpha\geq 2,$  an $L^1-L^\infty$ estimate with time decay of order $1/t$ on the fundamental solution of \eqref{fKP} leading  \lq\lq for free\rq\rq \, to Strichartz estimates. We will see below how to obtain  Strichartz estimates with loss in the general case $\alpha>0.$

Also, as noticed in \cite{MST1}, the result above extends with a few technicalities to non-homogeneous symbols that behave as $|\xi|^\alpha$ at infinity as for instance in the case of the ILW-KP equation.
%This will be detailed below for the KP-version of the Intermediate Long Wave equation and the full dispersion KP equation with surface tension \eqref{FDbis}.
\end{remark}

We now turn to the constraint problem in the nonlinear case, again following \cite{MST1}. After a change of frame we can eliminate the $u_x$ term and reduce the Cauchy problem for (\ref{fKP}) to
\begin{equation}\label{2.3}
(u_t+uu_x-D^\alpha_x u_x)_x+\kappa u_{yy}=0,\quad u(x,y,0)=\varphi(x,y).
\end{equation}
In order to state the result concerning (\ref{2.3}), for $k\in \N$, we denote by $H^{k,0}(\R^2)$ the Sobolev space
of $L^2(\R^2)$ functions $u(x,y)$ such that $\partial_{x}^{k}u\in L^2(\R^2)$.

\begin{theorem}\label{thm2.2} \cite{MST1}
Assume that $\alpha>1/2$. 
Let $\varphi\in L^{1}(\R^2)\cap H^{2,0}(\R^2)$ and 
\begin{equation}\label{new}
u\in C([0,T]\, ;\, H^{2,0}(\R^2))
\end{equation}
be a distributional solution of\/ {\rm(\ref{2.3})}.
Then, for every $t\in (0,T]$, $u(t,\cdot,\cdot)$ is a continuous function of
$x$ and $y$ which satisfies
\[
%\label{2.4}
\int_{-\infty}^{\infty}u(x,y,t)\,dx =0\quad \forall y\in \R,\,\,\, \forall t\in (0,T]
\]
in the sense of generalized Riemann integrals.
Moreover, $u(x,y,t)$ is the
derivative with respect to $x$ of a $C^1_x$ continuous function
which vanishes as
$x\rightarrow \pm \infty$ for every fixed $y\in\R$ and $t\in [0,T]$.
\end{theorem}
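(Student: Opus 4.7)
The plan is to exploit the Duhamel representation
\[
u(t) = U_\alpha(t)\varphi - \tfrac{1}{2}\int_0^t U_\alpha(t-s)\,\partial_x(u^2)(s)\,ds,
\]
which makes sense in $\boS'(\R^2)$ since $u\in C([0,T];H^{2,0})\hookrightarrow C([0,T];L^2)$ implies $u^2\in C([0,T];L^1)$. The idea is to write $u(t)=\partial_x V(t)$ for an explicit $V(t)$ with the regularity and decay claimed in the statement, so that the zero-mass constraint follows from the fundamental theorem of calculus in the $x$-variable.

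For the linear contribution, Theorem \ref{thm2.1} applied to $\varphi\in L^1(\R^2)$ directly gives $U_\alpha(t)\varphi=\partial_x(A_\alpha(t)\star\varphi)$ with $A_\alpha(t)\star\varphi$ continuous, bounded, of class $C^1_x$, and vanishing as $|x|\to\infty$. For the nonlinear contribution, I commute $\partial_x$ with $U_\alpha(t-s)$ and apply Theorem \ref{thm2.1} pointwise in $s$ to $u^2(s)\in L^1$, obtaining
\[
U_\alpha(t-s)\,\partial_x(u^2)(s) = \partial_x^2\bigl(A_\alpha(t-s)\star u^2(s)\bigr).
\]
Pulling one $\partial_x$ outside the time integral, I arrive at $u(t)=\partial_x V(t)$ with
\[
V(t) := A_\alpha(t)\star\varphi \;-\; \tfrac{1}{2}\,\partial_x\!\int_0^t A_\alpha(t-s)\star u^2(s)\,ds.
\]

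The core step is then to verify that $V(t)$ is continuous on $\R^2$, of class $C^1_x$, and satisfies $V(x,y,t)\to 0$ as $|x|\to\infty$ for every fixed $y$. The first term is handled by Theorem \ref{thm2.1}. For the time integral, each slice $A_\alpha(t-s)\star u^2(s)$ enjoys those same properties by Theorem \ref{thm2.1}, with the uniform bound $\|A_\alpha(t-s)\star u^2(s)\|_{L^\infty}\lesssim \|u^2(s)\|_{L^1}\le \|u\|_{C([0,T];L^2)}^2$. A further application of Theorem \ref{thm2.1} to $\partial_x(u^2)=2uu_x\in L^1$ (which holds by Cauchy--Schwarz since $u,u_x\in L^2$ by $u\in H^{2,0}$) gives the $C^1_x$ regularity of $V$ after differentiation under the integral. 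Once $V$ is established with these properties, the identity
\[
\int_{-R}^{R} u(x,y,t)\,dx = V(R,y,t)-V(-R,y,t) \xrightarrow[R\to\infty]{} 0
\]
yields the generalized Riemann integral vanishing. The main obstacle I expect is controlling the short-time behaviour of $A_\alpha(t-s)$ as $s\to t$, where it becomes singular; the required integrability in $s$ must be extracted from the precise form of $A_\alpha$ in Theorem \ref{thm2.1}. A cleaner technical route is to first establish the conclusion for Schwartz-class data (where all the manipulations above are automatic by Theorem \ref{triv}) and then pass to the limit in $L^1\cap H^{2,0}$.
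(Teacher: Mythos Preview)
The paper does not supply its own proof of this theorem; it merely cites \cite{MST1}. Your approach via the Duhamel formula combined with Theorem~\ref{thm2.1} is precisely the strategy of \cite{MST1}, and your identification of the singularity of $A_\alpha(t-s)$ as $s\to t$ as the crux of the matter is accurate.

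Two points need correction. First, the stated bound $\|A_\alpha(t-s)\star u^2(s)\|_{L^\infty}\lesssim\|u^2(s)\|_{L^1}$ with implicit constant independent of $t-s$ is false, as you yourself effectively acknowledge two sentences later. The scaling of the fundamental solution gives $\|A_\alpha(\tau)\|_{L^\infty}\sim|\tau|^{-(\alpha+2)/(2(\alpha+1))}$; since this exponent is strictly less than $1$ for every $\alpha>0$, the singularity is integrable in $s$, and this is exactly what \cite{MST1} exploits to make sense of the time integral and to pass the $\partial_x$ outside. Second, the proposed ``cleaner route'' via Schwartz approximation does not work as stated: the hypothesis concerns an \emph{arbitrary} distributional solution $u\in C([0,T];H^{2,0})$, with no uniqueness assumed in this class, so there is no mechanism guaranteeing that the given $u$ is a limit of solutions emanating from smooth data. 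One must work directly with the Duhamel representation of the given $u$ and control the singular integral as above.
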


\begin{remark}
Solutions in the class $H^{k,0}(\R^2)$ can be obtained when $\alpha=2, \kappa =1,$ that is for the classical KP-II equation and also when $\alpha>\frac{4}{3}$ for the fKP-II equation (see \cite{Ha}).We do not know if they exist 
in the range $0<\alpha <\frac{4}{3}.$

On the other hand, the structure of a solution corresponding to a gaussian initial data (thus not satisfying the zero mass constraint) is illustrated by the numerical simulations \cite{KSM} for the usual KP equation.

 \end{remark}

\subsection{Global weak solutions for fKP-I equations}
Here the idea is to use the conservation of the $L^2$ norm and of the hamiltonian  to construct global weak solutions of the fKP-I equation in the $L^2$ subcritical case. This is well known for the standard KP-I equation (see \cite{Tom}). The extension to fKP-I is straightforward and we indicate it for the sake of completeness.

More precisely one has

\begin{theorem}
Assume that $\kappa=-1$ (fKP-I) and that $\alpha>\frac{3}{2}.$ Let $u_0\in X^{\frac{\alpha}2}.$ Then there exists a global weak solution $u\in L^\infty(\R : X^{\frac{\alpha}2})$ of \eqref {fKP}. The same result holds true for $\alpha =\frac{3}{2}$ provided $\|u_0\|_{L^2}$ is sufficiently small.
\end{theorem}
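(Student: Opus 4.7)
The plan is to adapt the classical regularization-and-compactness argument of Tom \cite{Tom} for the standard KP-I equation to the fractional setting. The two conservation laws — the $L^2$ norm and the Hamiltonian \eqref{HamfKP} — furnish uniform a priori bounds in $X^{\frac{\alpha}2}$, while a regularization produces smooth approximate solutions whose weak-$\ast$ limit is the desired global weak solution.

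The first step is an a priori estimate for the $X^{\frac{\alpha}2}$ norm along smooth solutions. For $\kappa=-1$ all quadratic terms in $H_\alpha$ are non-negative, so it suffices to bound $\int u^3$ by a sub-linear power of the quadratic part. A fractional Gagliardo--Nirenberg inequality of the form stated in Lemma \ref{BLT} (which, by \cite{BLT} combined with the extension mentioned before that lemma, is valid in the whole range $\frac{4}{5}\leq \alpha\leq 2$) gives
$$\Big|\int u^3\,dxdy\Big|\leq c\,\|u\|_{L^2}^{\frac{5\alpha-4}{\alpha+2}}\,\|u\|_{H^{\frac{\alpha}2}_x}^{\frac{18-5\alpha}{2(\alpha+2)}}\,\|\partial_x^{-1}u_y\|_{L^2}^{\frac{1}{2}}.$$
After re-expressing the right-hand side in terms of the quadratic energy $E(u):=\|D_x^{\frac{\alpha}2} u\|_{L^2}^2+\|\partial_x^{-1}u_y\|_{L^2}^2$, its total power is $\tfrac{1}{2}\bigl(\tfrac{18-5\alpha}{2(\alpha+2)}+\tfrac{1}{2}\bigr)=\tfrac{5-\alpha}{\alpha+2}$, which is strictly less than $1$ precisely when $\alpha>\tfrac{3}{2}$. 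In that regime Young's inequality absorbs the cubic term into $E(u)$, and then the conservation of $\|u\|_{L^2}$ and of $H_\alpha(u)$ yield the uniform bound $\sup_{t}\|u(t)\|_{X^{\frac{\alpha}2}}\leq M(\|u_0\|_{X^{\frac{\alpha}2}})$. At the critical value $\alpha=\tfrac{3}{2}$ the exponent on $E(u)$ equals $1$, but its prefactor depends only on $\|u\|_{L^2}=\|u_0\|_{L^2}$; if this norm is small enough the absorption still goes through, which is the smallness hypothesis of the statement.

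To legitimate these formal computations I would introduce a family of smooth approximants. A convenient choice is a parabolic regularization: append $+\varepsilon(-\Delta)^m u$ to \eqref{fKP} (with $m$ sufficiently large) and mollify the data to $u_{0,\varepsilon}\in X^\infty$. A fixed-point argument in a high-regularity space produces smooth global-in-time solutions $u_\varepsilon$, and the associated energy identity differs from conservation only by a non-positive dissipative contribution. Combining this with the a priori estimate of the previous step yields the uniform bound
$$\sup_{\varepsilon>0}\ \sup_{t\in\R}\ \|u_\varepsilon(t)\|_{X^{\frac{\alpha}2}}\leq M<\infty,$$
from which one extracts a subsequence converging weakly-$\ast$ in $L^\infty(\R;X^{\frac{\alpha}2})$ to a candidate limit $u$.

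The main obstacle is passing to the limit in the nonlinear term $u_\varepsilon u_{\varepsilon,x}=\tfrac{1}{2}(u_\varepsilon^2)_x$, since weak-$\ast$ convergence does not respect products. The remedy is a local strong-compactness argument: the equation itself bounds $\partial_t u_\varepsilon$ in $L^\infty_{loc}(\R;H^{-N}_{loc}(\R^2))$ for some large $N$, and combined with the uniform spatial regularity just obtained, the Aubin--Lions lemma produces strong convergence $u_\varepsilon\to u$ in $L^2_{loc}(\R\times\R^2)$. This is enough to pass to the limit in the distributional formulation of \eqref{fKP} and verify that $u$ is a global weak solution in $L^\infty(\R;X^{\frac{\alpha}2})$. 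The delicate point is calibrating the regularization so that $u_\varepsilon$ is smooth globally while the coercivity bound of Step 1 survives uniformly as $\varepsilon\to 0^+$.
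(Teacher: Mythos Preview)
Your a priori estimate is correct and is exactly the key point the paper isolates: Lemma~\ref{BLT} together with $L^2$ conservation gives $\big|\int u^3\big|\lesssim C(\|u_0\|_{L^2})\,E(u)^{(5-\alpha)/(\alpha+2)}$, and the exponent is $<1$ precisely when $\alpha>\tfrac32$ (and equals $1$ at $\alpha=\tfrac32$, which explains the smallness hypothesis). The paper's own proof is nothing more than this observation together with the phrase ``standard compactness method,'' so your overall strategy coincides with it.

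There is, however, one inaccuracy in your regularization step. With the parabolic term $+\varepsilon(-\Delta)^m u$ appended, the $L^2$ norm is indeed dissipated, but the Hamiltonian is not: one finds
\[
\frac{d}{dt}H_\alpha(u_\varepsilon)=-\varepsilon\big\langle H_\alpha'(u_\varepsilon),(-\Delta)^m u_\varepsilon\big\rangle,
\]
and while the quadratic parts of $H_\alpha'$ pair with $(-\Delta)^m u_\varepsilon$ with a good sign, the cubic part $-\tfrac12 u_\varepsilon^2$ does not. So the assertion that the energy identity ``differs from conservation only by a non-positive dissipative contribution'' is false for $H_\alpha$. This is not fatal---one can estimate the bad term against the good dissipative ones---but it is extra work you have not done, and your closing sentence about ``calibrating the regularization'' does not quite cover it.

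The cleaner route, implicit in the paper via \cite{Tom} and Theorem~\ref{triv}, is to regularize only the initial data: take $u_{0,\varepsilon}\in H^s_{-1}(\R^2)$ with $s>2$, apply the local theory of Theorem~\ref{triv}, and use that both $\|u\|_{L^2}$ and $H_\alpha(u)$ are conserved \emph{exactly} along these smooth solutions. Your Step~1 bound then extends the local solution globally with a uniform-in-$\varepsilon$ control in $X^{\alpha/2}$, and the Aubin--Lions compactness argument you outline goes through unchanged.
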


\begin{proof}
It results from a standard compactness method. The key point is to notice that when $\alpha >\frac{3}{2},$ (and when $\alpha =\frac{3}{2}$ for a sufficiently small $L^2$ norm), the $X^{\frac{\alpha}2}$ norm control a priori the $L^3$ norm as consequence of Lemma \ref{BLT} and the conservation of the $L^2$ norm.
\end{proof}
%\subsection{ A non optimal finite time blow-up result}

%\textcolor{red}{TO BE COMPLETED}

\section{Ill-posedness issues for fKP}

\subsection{Ill-posedness issues for fKP-II}
We first prove that the fKP-II equations are \emph{quasilinear} as soon as $\alpha < 4/3$ in the sense that the local Cauchy problem cannot be solved, for initial data in any isotropic or anisotropic Sobolev space, by a Picard iterative scheme based on the Duhamel formula. This proves in particular the  result obtained by Hadac in \cite{Ha} is (almost) sharp with respect to the condition $\alpha >\frac43$.

\begin{theorem} \label{illposed-fKPII} 
Assume $\kappa=1$ (fKP-II). Let $\alpha \in (0,\frac43)$ and $(s_1,s_2) \in \mathbb R^2$ (resp. $s \in \mathbb R$). 
Then, there exists no $T>0$ such that \eqref{fKP} admits a unique local solution defined on the time interval $[0,T]$ and such that its flow-map 
\begin{displaymath}
S_t: u_0 \mapsto u(t), \quad t \in (0,T]
\end{displaymath}
is $C^2$ differentiable at zero from $H^{s_1,s_2}(\mathbb R^2)$ to $H^{s_1,s_2}(\mathbb R^2)$, (resp. from $X^s(\mathbb R^2)$ to $X^s(\mathbb R^2)$).
\end{theorem}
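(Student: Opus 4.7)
The plan is to follow the Bourgain--Tzvetkov--Molinet-Saut-Tzvetkov paradigm used in \cite{MST} for KP-I, adapted to the fractional KP-II symbol. If the flow map $S_t:u_0\mapsto u(t)$ is $C^2$ at the origin from $H^{s_1,s_2}(\R^2)$ (resp.\ $X^s(\R^2)$) into itself, then differentiating twice the Duhamel formula along $u_0=\epsilon\phi$ shows that the second Fr\'echet derivative is (up to a constant) the bilinear operator
\[
B(\phi,\phi)(t)=\int_0^t S_\alpha(t-t')\,\partial_x\bigl[(S_\alpha(t')\phi)^2\bigr]\,dt'.
\]
Boundedness of $D^2S_t(0)$ is equivalent to the bilinear estimate $\|B(\phi,\phi)\|_{H^{s_1,s_2}}\lesssim \|\phi\|_{H^{s_1,s_2}}^2$ (resp.\ $X^s$), valid on some $[0,T]$; the goal is therefore to exhibit a sequence $\{\phi_N\}$ uniformly bounded in the target space and satisfying $\|B(\phi_N,\phi_N)\|\to\infty$.

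Passing to the Fourier side one has
\[
\widehat{B(\phi,\phi)}(t,\xi,\eta)=c\,\xi\, e^{it\omega(\xi,\eta)}\int \hat\phi(\xi_1,\eta_1)\hat\phi(\xi-\xi_1,\eta-\eta_1)\,\frac{e^{-it\Omega}-1}{-i\Omega}\,d\xi_1\,d\eta_1,
\]
with the fKP-II symbol $\omega(\xi,\eta)=\xi|\xi|^\alpha-\eta^2/\xi$ and, using the classical KP identity $\frac{(\eta_1+\eta_2)^2}{\xi_1+\xi_2}-\frac{\eta_1^2}{\xi_1}-\frac{\eta_2^2}{\xi_2}=-\frac{(\xi_1\eta_2-\xi_2\eta_1)^2}{\xi_1\xi_2(\xi_1+\xi_2)}$, the resonance function
\[
\Omega(\xi_1,\eta_1;\xi_2,\eta_2)=\xi|\xi|^\alpha-\xi_1|\xi_1|^\alpha-\xi_2|\xi_2|^\alpha+\frac{(\xi_1\eta_2-\xi_2\eta_1)^2}{\xi_1\xi_2(\xi_1+\xi_2)},
\]
where $\xi=\xi_1+\xi_2$, $\eta=\eta_1+\eta_2$. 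The bilinear symbol is then essentially $\xi\cdot\min(t,|\Omega|^{-1})$ on the support of the convolution.

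The test data will be chosen as $\hat\phi_N=a_N(\mathbf 1_{D_1^N}+\mathbf 1_{D_2^N})$, where the boxes $D_1^N,D_2^N$ are centered respectively at $(N,\eta_1^N)$ and $(-N+\delta_N,\eta_2^N)$, with widths much smaller than $\delta_N$ and $|\eta_j^N|$. Expanding the dispersion part of $\Omega$ around $\xi_1=-\xi_2=N$, one gets $\xi|\xi|^\alpha-\xi_1|\xi_1|^\alpha-\xi_2|\xi_2|^\alpha = -(\alpha+1)\delta_N N^\alpha+O(\delta_N^2 N^{\alpha-1})+\delta_N^{\alpha+1}$, so choosing $\delta_N\sim N^{-\alpha}$ makes the dispersive part of $\Omega$ of order one; a compatible choice of $\eta_1^N,\eta_2^N$ (with $\eta_1^N+\eta_2^N$ small but nonzero) keeps the transverse contribution to $\Omega$ bounded, so $|\Omega|\lesssim 1/t$ for $t$ fixed, forcing the bilinear symbol to behave like $\xi\cdot t = t\delta_N$. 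The output is therefore concentrated near the frequency $(\delta_N,\eta_1^N+\eta_2^N)$ with $\delta_N=N^{-\alpha}\to 0$; in the $X^s$-norm this low-$\xi$ frequency is hugely amplified by the factor $|\eta/\xi|^2$ coming from $\|\partial_x^{-1}\partial_y\cdot\|_{L^2}^2$, while in the anisotropic Sobolev norm one leverages the mismatch between the input weights $N^{s_1}|\eta_j^N|^{s_2}$ (controlled by normalisation) and the output weights carried by $\delta_N^{s_1}|\eta_1^N+\eta_2^N|^{s_2}$.

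The main calculation will then be the optimization of the ratio $\|B(\phi_N,\phi_N)\|/\|\phi_N\|^2$ in $N$, which reduces (after plugging in $\delta_N\sim N^{-\alpha}$ and balancing $\eta_j^N$) to a polynomial expression $N^{\theta(\alpha,s_1,s_2)}$; the role of the hypothesis $\alpha<4/3$ is precisely to make the exponent $\theta$ positive, independently of $s_1,s_2$, which will be checked by choosing $\eta_1^N+\eta_2^N$ so that the $(s_1,s_2)$-dependence of input and output weights cancels out in the ratio. The main obstacle will be ensuring that this cancellation can be realised uniformly in $(s_1,s_2)\in\R^2$ and in $s\in\R$, in both the isotropic/anisotropic Sobolev and the $X^s$ settings, and that the secondary contributions in $\hat\phi_N*\hat\phi_N$ (self-interactions of the two bumps, located at $\xi\sim\pm 2N$) do not compete with the resonant interaction near $\xi=\delta_N$ which drives the blow-up. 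The threshold $\alpha=4/3$ will appear as the value at which the exponent $\theta$ vanishes, matching both the $L^2$-critical scaling identified in the introduction and the endpoint of Hadac's well-posedness range.
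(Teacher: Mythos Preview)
Your overall strategy is correct: $C^2$ regularity of the flow at zero forces the bilinear Duhamel estimate, and disproving the latter via concentrated test data on which the resonance function is small is the right mechanism. However, the frequency configuration you propose---a high-high interaction with $\xi_1\sim N$, $\xi_2\sim -N+\delta_N$, output $\xi\sim\delta_N\to 0$---cannot deliver an $(s_1,s_2)$-independent conclusion, and this is exactly the ``main obstacle'' you anticipate but do not resolve.

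The defect is twofold. First, the $\partial_x$ in the nonlinearity contributes a factor $|\xi|\sim\delta_N$ on the output, which is \emph{small}; you forfeit the large factor that should drive the blow-up. Second, and decisively for the $H^{s_1,s_2}$ statement: both input bumps carry $\langle\xi\rangle^{s_1}$-weight $\sim N^{s_1}$, while the output carries $\langle\delta_N\rangle^{s_1}\sim 1$. The ratio $\|B(\phi_N,\phi_N)\|_{H^{s_1,s_2}}/\|\phi_N\|_{H^{s_1,s_2}}^2$ therefore picks up an unavoidable factor $\sim N^{-2s_1}$, which kills the divergence for every $s_1>0$. Your proposed remedy of tuning $\eta_1^N+\eta_2^N$ can at best adjust the $\langle\eta\rangle^{s_2}$-weights; once the resonance constraint $\delta_N N^\alpha\sim 1$ is imposed there is no remaining freedom in the $\xi$-locations, so the $s_1$-mismatch is structural.

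The paper avoids this by using a \emph{low-high $\to$ high} interaction (following Ribaud--Vento \cite{rv} for fZK): take $\widehat{\phi_1}$ supported at $\xi_1\sim\gamma\ll1$ and $\widehat{\phi_2}$ supported at $\xi_2\sim N$, so the output lands at $\xi\sim N$. Then (i) the derivative contributes the \emph{large} factor $N$, and (ii) the output $\xi$-frequency matches that of $\phi_2$, so inserting a prefactor $N^{-s_1}$ into $\widehat{\phi_2}$ makes the $s_1$-weights cancel exactly between input and output; the $\eta$-supports are kept at size $\gamma^\epsilon\ll1$ so the $s_2$-weights are trivial. The resonance splits as $\Gamma_1\sim\gamma N^\alpha$ (dispersive part) plus $\Gamma_2\sim\gamma^{2\epsilon-1}$ (transverse part), both made $\ll1$ by $\gamma=N^{-\alpha-\theta}$, $\epsilon=\tfrac12+\theta$. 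The resulting ratio is $N\gamma^{(1+\epsilon)/2}\sim N^{\,1-3\alpha/4-\tilde\theta}$, diverging precisely when $\alpha<4/3$, independently of $(s_1,s_2)$; the same choice works verbatim in $X^s$ since all frequencies involved satisfy $|\eta/\xi|\lesssim 1$.
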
 

\begin{remark} We will write the proof for $H^{s_1,s_2}(\mathbb R^2)$, but it will be clear that this also works with $X^s(\mathbb R^2)$ instead.  
\end{remark}

\begin{proof}
Our goal is to prove that the inequality 

\begin{equation}\label{picard}
\Big\|\int\limits_0^t U_{\alpha}(t-t')(U_{\alpha}(t')\phi_1 \p_x U_{\alpha}(t')\phi_2)(x,y)\,dt'\Big\|_{H^{s_1,s_2}} \lesssim \|\phi_1\|_{H^{s_1,s_2}}\|\phi_2\|_{H^{s_1,s_2}} ,
\end{equation}
does not hold for any $\phi_1, \phi_2 \in H^{s_1,s_2}(\R^2)$ and any $s_1, s_2\in\R$. This in particular implies
 that the data-solution map is not $C^2$.

To show this we will follow the arguments in \cite{MST} and \cite{rv}.  More precisely, it would be enough to construct sequences of functions $\phi_{i,N}$, $i=1,2$, such that
for any $s_1, s_2 \in\R$ it holds
\begin{equation}\label{seq}
\|\phi_{i,_N}\|_{H^{s_1,s_2}}\le C
\end{equation}
and 
\begin{equation}\label{inequality}
\underset{N\to\infty}{\lim} \Big\|\int\limits_0^t U_{\alpha}(t-t')(U_{\alpha}(t')\phi_{1,N}\p_x U_{\alpha}(t')\phi_{2,N})(x,y)\,dt'\Big\|_{H^{s_1,s_2}} =  + \infty.
\end{equation}

First  we observe that
\begin{equation*}
\begin{split}
\Big\{\int\limits_0^t &U_{\alpha}(t-t')(U_{\alpha}(t')\phi_1 \p_x U_{\alpha}(t')\phi_2)(x,y)\,dt'\Big\}^{\widehat{\hskip 5pt}}(\xi,\eta)\\
&=
\int\limits_{\R^2} e^{it\omega(\xi,\eta)}\frac{e^{it\Omega^{\alpha}(\xi_1,\xi_2,\eta_1,\eta_2)} -1}{\Omega^{\alpha}(\xi_1,\xi_2,\eta_1,\eta_2) }\;\xi\;
 \widehat{\phi}_1(\xi-\xi_1,\eta-\eta_1)\,\widehat{\phi_2}(\xi_1,\eta_1)
\,d\xi_1 d\eta_1\\
\end{split}
\end{equation*}
where the resonant function
\begin{equation*}
 \Omega^{\alpha}(\xi_1,\xi_2,\eta_1,\eta_2)=(|\xi|^{\alpha}\xi-\frac{\eta^2}{\xi})-(|\xi_1|^{\alpha}\xi_1-\frac{\eta^2_1}{\xi_1})-(|\xi_2|^{\alpha}\xi_2-\frac{\eta^2_2}{\xi_2}) \, ,
 \end{equation*}
 with $\xi=\xi_1+\xi_2$ and $\eta=\eta_1+\eta_2$. 
 
 Arguing as in \cite{rv} for the fZK equation, we choose
 \begin{equation*}
 \begin{cases}
 \widehat{\phi_{1,N}}(\xi_1,\eta_1)= \mathbbm{1}_{D_1}\,\gamma^{-\frac12-\frac{\epsilon}2} \hskip35pt \text{with}\hskip10pt D_1=[\frac{\gamma}2,\gamma]\times[\gamma^{\epsilon}, 2\gamma^{\epsilon}]\\
  \widehat{\phi_{2,N}}(\xi_2,\eta_2)= \mathbbm{1}_{D_2} \,N^{-s_1}\gamma^{-\frac12-\frac{\epsilon}2} \hskip10pt \text{with}\hskip10pt D_2=[N, N+\gamma]\times[-\gamma^{\epsilon}, -\frac{\gamma^{\epsilon}}4]
  \end{cases}
  \end{equation*}
where $N\gg1$ and $0<\gamma\ll 1$ and $\frac12<\epsilon$ to be determined later. We observe that $\phi_{1,N}$ and $\phi_{2,N}$ satisfy \eqref{seq}.

Moreover, notice that
\begin{equation*}
\Omega^{\alpha}(\xi_1,\xi_2,\eta_1,\eta_2)=
\underset{\Gamma_1^{\alpha}(\xi_1,\xi_2,\eta_1,\eta_2)}{ \underbrace{|\xi_1+\xi_2|^{\alpha}(\xi_1+\xi_2)-|\xi_1|^{\alpha}\xi_1-|\xi_2|^{\alpha}\xi_2}}
+\underset{\Gamma_2^{\alpha}(\xi_1,\xi_2,\eta_1,\eta_2)}{\underbrace{\frac{(\eta_1\xi_2-\eta_2\xi_1)^2}{(\xi_1+\xi_2)\xi_1 \xi_2}}}
\end{equation*}

We start estimating the contribution given by $\Gamma_1(\xi_1,\xi_2,\eta_1,\eta_2)$. We can use the mean value theorem to deduce the existence of $\theta \in [\xi_2,\xi_2+\xi_1]$
such that
\begin{equation*}
|\xi_1+\xi_2|^{\alpha}(\xi_1+\xi_2)-|\xi_2|^{\alpha}\xi_2= (\alpha+1)|\xi_1| |\theta|^{\alpha}.
\end{equation*}
This leads to
\begin{equation}\label{mvt1}
\big||\xi_1+\xi_2|^{\alpha}(\xi_1+\xi_2)-|\xi_2|^{\alpha}\xi_2\big|\sim \g N^{\alpha}.
\end{equation}
On the other hand, $|\xi_1|\sim \g ={\rm o}(N)$ from which follows that
\begin{equation}\label{mvt2}
||\xi_1|^{\alpha}\xi_1|\sim \g^{\alpha+1}.
\end{equation}
Combining \eqref{mvt1} and \eqref{mvt2} we obtain
\begin{equation}\label{mvt3}
|\Gamma_1^{\alpha}(\xi_1,\xi_2,\eta_1,\eta_2)| \sim \g\,N^{\alpha}.
\end{equation}

Next  we estimate $\Gamma_2(\xi_1,\xi_2,\eta_1,\eta_2)$. Using that $\xi_2\in [N,N+\g]$ and $\eta_2\in [-\g^{\epsilon}, -\frac{\g^{\epsilon}}4]$,
we obtain that
\begin{equation*}
N\g^{\epsilon} \le \eta_1\,\xi_2\le 2(N+\g)\g^{\epsilon} \hskip15pt \text{and}\hskip15pt  -\frac{\g^{1+\epsilon}}2\le \eta_2\,\xi_1\le 
\frac{\g^{1+\epsilon}}4.
\end{equation*}
Hence $(\eta_1\xi_2-\eta_2\xi_1)\sim (N\g^{\epsilon})^2$. Since $|(\xi_1+\xi_2)\xi_1\xi_2|\sim N^2\g $ we conclude
that
\begin{equation}\label{mvt4}
|\Gamma_2^{\alpha}(\xi_1,\xi_2,\eta_1,\eta_2)|\sim \g^{2\epsilon-1}.
\end{equation}

Thus, we want to satisfy the conditions
$$
\gamma N^{\alpha}\ll 1 \iff \gamma =N^{-\alpha-\theta} \hskip5pt \forall\,\theta>0
$$
and 
$$
\gamma^{2\epsilon-1}\ll 1 \iff  \epsilon >1/2.
$$
Therefore taking $\gamma =N^{-\alpha-\theta}$ and $\epsilon=\frac12+\theta$, $ \forall \theta>0$, we have $|\Omega^{\alpha}|\ll 1$.

Notice that $\|\phi_i\|_{H^{s_1,s_2}}\lesssim 1$, $i=1,2$. Then the inequality \eqref{picard} holds if and only if
\begin{equation*}
\begin{split}
\frac{NN^{s_1}\gamma^{ 1+\epsilon}\gamma^{\frac12+\frac{\epsilon}2}}{\gamma^{1+\epsilon}N^{s_1}} \lesssim 1
%\iff & N \; N^{-\frac{\alpha}2-\frac{\theta}2}\; N^{(-\frac{\alpha}2-\frac{\theta}2)\epsilon} \lesssim 1\\
%\iff & N^{1-\frac{\alpha}2-\frac{\alpha}4-\theta(\frac12+\frac{\alpha}2+\frac{\theta}2)}\lesssim 1\\
\iff & N^{1-\frac34\alpha-{\tilde{\theta}}} \lesssim 1, \hskip10pt \text{where}\hskip5pt  \tilde{\theta}=\theta(\frac12+\frac{\alpha}2+\frac{\theta}2)>0\\
\iff & \alpha \ge \frac43.
\end{split}
\end{equation*}

From this we can conclude that \eqref{picard} does not hold if $0<\alpha<\frac43$ for any $s_1, \, s_2\in\R$. Therefore the IVP associated to fKPII is ill-posed for $\alpha\in(0,\frac43)$ and any $s_1, \, s_2 \in\R$. 
\end{proof}

\begin{remark} \label{IPfKPII}
 1. The result is sharp in the sense that when $\alpha >\frac{4}{3}$ it is proven in \cite{Ha} that the Cauchy problem for fKP-II can be solved by a iterative method.
2. The same argument applies to  the fKP-I equation when $\alpha <\frac{4}{3}$ but we will see below that the result holds true for any $\alpha\in (0,2]$ in this case.
\end{remark}

\subsection{Ill-posedness issue for fKP-I}
In this subsection, we prove that the fKP-I equations are \emph{quasilinear} for $0<\alpha \le 2$ in the sense that the local Cauchy problem cannot be solved, for initial data in any isotropic or anisotropic Sobolev space, by a Picard iterative scheme based on the Duhamel formula. This fact has already been established in \cite{MST} for the KP-I equation itself ($\alpha=2$).

\begin{theorem} \label{illposed-fKPI} 
Assume $\kappa=-1$ (fKP-I). Let $\alpha \in (0,2]$ and $(s_1,s_2) \in \mathbb R^2$ (resp. $s \in \mathbb R$). 
Then, there exists no $T>0$ such that \eqref{fKP} admits a unique local solution defined on the time interval $[0,T]$ and such that its flow-map 
\begin{displaymath}
S_t: u_0 \mapsto u(t), \quad t \in (0,T]
\end{displaymath}
is $C^2$ differentiable at zero from $H^{s_1,s_2}(\mathbb R^2)$ to $H^{s_1,s_2}(\mathbb R^2)$, (resp. from $X^s(\mathbb R^2)$ to $X^s(\mathbb R^2)$).
\end{theorem}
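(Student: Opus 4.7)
The plan is to follow the Bourgain-Tzvetkov iteration-obstruction argument of Theorem \ref{illposed-fKPII}, in the spirit of \cite{MST} for the classical KP-I case $\alpha = 2$. As in that proof, $C^2$ differentiability at zero of the flow map is equivalent to the bilinear Duhamel estimate \eqref{picard} (with $\kappa = -1$ now), so the task is to produce sequences $\phi_{1,N},\phi_{2,N}$ with $\|\phi_{i,N}\|_{H^{s_1,s_2}}\lesssim 1$ for which the left-hand side of \eqref{picard} diverges as $N\to\infty$.

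For $\alpha\in(0,\frac{4}{3})$, the low-high construction from the proof of Theorem \ref{illposed-fKPII} carries over verbatim (Remark \ref{IPfKPII}(2)) since it does not use the sign of $\kappa$. The substantive new content is the range $\alpha\in[\frac{4}{3},2]$, where the key point is that, for $\kappa=-1$, the resonance function
$$\Omega^{\alpha}=\Gamma_1^{\alpha}-\frac{(\eta_1\xi_2-\eta_2\xi_1)^2}{(\xi_1+\xi_2)\xi_1\xi_2}$$
has summands of opposite signs when $\xi_1,\xi_2$ share a sign. Consequently \emph{exact} high-frequency resonances $\Omega^{\alpha}=0$ become available -- a mechanism absent for fKP-II -- and this is what drives the wider ill-posedness range.

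Concretely, setting $\beta=(\alpha+2)/2$ and $a_\alpha=\sqrt{2^\alpha-1}$, I would take $\widehat{\phi_{1,N}}$ and $\widehat{\phi_{2,N}}$ to be normalized characteristic functions of small rectangles $D_1,D_2$ centered at $(N,a_\alpha N^{\beta})$ and $(N,-a_\alpha N^{\beta})$ respectively. The identity
$$\Gamma_1^{\alpha}\big|_{\xi_1=\xi_2=N}=(2^{\alpha+1}-2)N^{\alpha+1}=2a_\alpha^{2}N^{2\beta-1}$$
at the centers yields $\Omega^{\alpha}=0$ exactly. A short computation gives $\partial_{\xi_i}\Omega^{\alpha}=O(N^{\alpha})$ and $\partial_{\eta_i}\Omega^{\alpha}=O(N^{\alpha/2})$ there, so taking the sides of the rectangles to be $\gamma=N^{-\alpha-\theta}$ in $\xi$ and $\mu=N^{-\alpha/2-\theta}$ in $\eta$ (for $\theta>0$ arbitrarily small) keeps $|\Omega^{\alpha}|\ll 1$ uniformly on $D_1\times D_2$, permitting the Duhamel factor $(e^{it\Omega^{\alpha}}-1)/\Omega^{\alpha}$ to be replaced by $it$ up to constants.

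The principal obstacle is then to compute the ratio
$$\frac{\|\mathrm{Picard}\|_{H^{s_1,s_2}}}{\|\phi_{1,N}\|_{H^{s_1,s_2}}\,\|\phi_{2,N}\|_{H^{s_1,s_2}}}$$
and verify its divergence as $N\to\infty$ for \emph{every} $(s_1,s_2)\in\mathbb R^2$. The symmetric configuration above handles only a wedge of indices directly; the remaining $(s_1,s_2)$ are addressed by placing the two rectangles at distinct $\xi$-scales $M\neq N$ and readjusting the $\eta$-centers (and the coefficient $a_\alpha$) so as to preserve the cancellation $\Gamma_1^{\alpha}=-\Gamma_2^{\alpha}$. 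The additional degrees of freedom in the resonant set $\{\Omega^{\alpha}=0\}$ -- present only for $\kappa=-1$ -- allow one to tune the normalization factors of the data against the $H^{s_1,s_2}$-weights at the support of the Picard term for any prescribed $(s_1,s_2)$, producing the required contradiction to the putative bilinear bound. The $X^s$-version follows along identical lines with the norm \eqref{Xnorm} in place of $\|\cdot\|_{H^{s_1,s_2}}$.
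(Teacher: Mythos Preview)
Your treatment of the range $\alpha\in(0,\tfrac43)$ is fine. The gap is in the range $\alpha\in[\tfrac43,2]$, where your symmetric high--high construction does not do what you claim, and your fallback to ``asymmetric scales $M\neq N$'' is not an argument but a hope.

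Concretely, with both inputs at $(\xi,\eta)\approx(N,\pm a_\alpha N^{\beta})$, $\beta=(\alpha+2)/2$, the output sits at $(\xi,\eta)\approx(2N,0)$. Carrying through the computation with your box sizes $\gamma=N^{-\alpha-\theta}$, $\mu=N^{-\alpha/2-\theta}$ gives
\[
\frac{\|\text{Picard}\|_{H^{s_1,s_2}}}{\|\phi_{1,N}\|\,\|\phi_{2,N}\|}\;\sim\; t\,N^{\,1-\frac{3\alpha}{4}-\theta}\cdot\frac{\langle 2N\rangle^{s_1}\langle 0\rangle^{s_2}}{\langle N\rangle^{2s_1}\langle a_\alpha N^{\beta}\rangle^{2s_2}}\;\sim\; t\,N^{\,1-\frac{3\alpha}{4}-s_1-(\alpha+2)s_2-\theta},
\]
which diverges only on the half-plane $s_1+(\alpha+2)s_2<1-\tfrac{3\alpha}{4}$. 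For $\alpha\ge\tfrac43$ this excludes even $(s_1,s_2)=(0,0)$. Your ``tuning'' proposal with two \emph{high} scales $M\neq N$ does not repair this: as long as both inputs carry large Sobolev weights, the output weight at a single frequency cannot compensate for the product of two input weights uniformly in $(s_1,s_2)$.

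The paper avoids this entirely by using a \emph{low--high} interaction (following \cite{MST}): $\widehat{\phi_1}$ is supported near $(\xi_1,\eta_1)\in[\tfrac{\gamma}{2},\gamma]\times[-\sqrt{1+\alpha}\,\gamma^2,\sqrt{1+\alpha}\,\gamma^2]$ with $\gamma\ll1$, and $\widehat{\phi_2}$ near $(\xi_2,\eta_2)\in[N,N+\gamma]\times[\sqrt{1+\alpha}\,N^{\beta},\sqrt{1+\alpha}\,N^{\beta}+\gamma^2]$. The point is that the low input carries $H^{s_1,s_2}$-weight $\sim 1$ for \emph{every} $(s_1,s_2)$, and the output frequency coincides (up to $O(\gamma)$) with that of $\phi_2$, so the weight ratio is identically $\sim 1$ and the divergence $N\gamma^{3/2}\sim N^{1-3\alpha/8-3\theta/2}$ is index-free. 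The KP-I sign enters not through exact resonance but through cancellation of the \emph{leading terms}: one computes $\Gamma_1^{\alpha}=(1+\alpha)N^{\alpha}\gamma+O(\gamma^2 N^{\alpha-1})$ and $\Gamma_2^{\alpha}=(1+\alpha)N^{\alpha}\gamma+O(\gamma^2 N^{\alpha/2})$, so $|\Omega^{\alpha}|\lesssim \gamma^2 N^{\alpha/2}\ll1$ for $\gamma=N^{-\alpha/4-\theta}$ precisely when $\alpha\le 2$. This is the mechanism you should be using; your symmetric construction is the wrong geometry for an all-$(s_1,s_2)$ statement.
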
 

\begin{remark}
We consider only the cases where $0<\alpha \le 2$, but our result for the fKPI case probably holds for $0<\alpha<\alpha_0$, for some $\alpha_0>2$. As aforementioned, the fifth order  ($\alpha =4$) KP-I equation is semilinear (see \cite{ST}).
\end{remark}

\begin{proof}
Since the argument employed is similar to the fKPII case we will give a sketch of the
main differences between both cases. Note that due to Remark \ref{IPfKPII}, we may assume that $\frac43\le \alpha \le 2$.
As in the previous case we have that
\begin{equation*}
\begin{split}
\Big\{\int\limits_0^t &U_{\alpha}(t-t')(U_{\alpha}(t')\phi_1 \p_x U_{\alpha}(t')\phi_2)(x,y)\,dt'\Big\}^{\widehat{\hskip 5pt}}(\xi,\eta)\\
&=\int\limits_{\R^2} e^{it\omega(\xi,\eta)}\frac{e^{it\Omega^{\alpha}(\xi_1,\xi_2,\eta_1,\eta_2)} -1}{\Omega^{\alpha}(\xi_1,\xi_2,\eta_1,\eta_2) }\;\xi\;
 \widehat{\phi}_1(\xi-\xi_1,\eta-\eta_1)\,\widehat{\phi_2}(\xi_1,\eta_1)
\,d\xi_1 d\eta_1
\end{split}
\end{equation*}
where the resonant function
\begin{equation*}
\begin{split}
 \Omega^{\alpha}(\xi_1,\xi_2,\eta_1,\eta_2)&=\Big(|\xi_1+\xi_2|^{\alpha}(\xi_1+\xi_2)+\frac{(\eta_1+\eta_2)^2}{(\xi_1+\xi_2)}\Big)\\
 &\quad-\Big(|\xi_1|^{\alpha}\xi_1+\frac{\eta^2_1}{\xi_1}\Big)-\Big(|\xi_2|^{\alpha}\xi_2+\frac{\eta^2_2}{\xi_2}\Big)\\
 &=\underset{\Gamma^{\alpha}_1(\xi_1,\xi_2,\eta_1,\eta_2)}{\underbrace{|\xi_1+\xi_2|^{\alpha}(\xi_1+\xi_2)
 -|\xi_1|^{\alpha}\xi_1-|\xi_2|^{\alpha}\xi_2}}
 -\underset{\Gamma^{\alpha}_2(\xi_1,\xi_2,\eta_1,\eta_2)}{\underbrace{\frac{(\eta_1\xi_2-\eta_2\xi_1)^2}{(\xi_1+\xi_2)\xi_1 \xi_2}}}
 \end{split}
 \end{equation*}
 
 We choose now following \cite{MST} 
\begin{equation*}
\widehat{\phi}_1(\xi_1,\eta_1)= \mathbbm{1}_{D_1}\,\gamma^{-\frac32} \quad \text{with} \quad D_1=[\frac{\g}{2},\g]\times [-\sqrt{1+\alpha}\,\g^{2},  \sqrt{1+\alpha}\,\g^2] 
\end{equation*}
and 
\begin{equation*}
\widehat{\phi}_2(\xi_2,\eta_2)= \mathbbm{1}_{D_2} \, \gamma^{-\frac32} N^{-s_1-(1+\frac{\alpha}2) s_2}
\end{equation*}
with 
\begin{equation*}
D_2=[N,N+\g]\times [\sqrt{1+\alpha}\, N^{\frac{\alpha+2}{2}},  \sqrt{1+\alpha} \,N^{\frac{\alpha+2}{2}}+\g^2]
\end{equation*}
where $N\gg1$ and $0<\gamma\ll 1$ to be determined later. It is easy to verify that $\;\|\phi_i\|_{H^{s_1,s_2}}\lesssim 1$, $i=1,2$.

Using that $(\xi_1,\eta_1)\in D_1$ and  $(\xi_2,\eta_2)\in D_2$ we deduce the following estimate
\begin{equation}\label{gama1}
\Gamma^{\alpha}_1(\xi_1,\xi_2,\eta_1,\eta_2)=(1+\alpha)N^{\alpha}\gamma +
{\rm O}(\gamma^2 N^{\alpha-1}).
\end{equation}

On the other hand,
\begin{equation}\label{gama2}
\Gamma^{\alpha}_2(\xi_1,\xi_2,\eta_1,\eta_2)=(1+\alpha)N^{\alpha}\g
+{\rm O}(N^{\frac{\alpha}{2}}\g^2).
\end{equation}

Using \eqref{gama1} and \eqref{gama2} we require
\begin{equation*}
|\Omega^{\alpha}| \lesssim \Big| N^{\alpha-1}\g^2\Big|+\Big|N^{\frac{\alpha}{2}}\g^2\Big|\ll 1.
\end{equation*}
Hence
$$
N^{\frac{\alpha}{2}}\g^2\ll 1 \iff \g=N^{-\frac{\alpha}{4}-\theta} \hskip10pt \forall \;\theta>0.
$$
This implies that
$$
N^{\alpha-1}\g^2=N^{\alpha-1} N^{-\frac{\alpha}{2}-2\theta} = N^{\big(\frac{\alpha}{2}-1\big)-2\theta}\ll 1 \iff \alpha \le2 \, .
$$

Therefore using the properties of the functions $\phi_i$, $i=1,2$, the inequality \eqref{picard} holds if and only if
\begin{equation*}
\begin{split} 
\frac{N N^{s_1+(1+\frac{\alpha}{2})s_2} \g^3 \g^{\frac32}}{\g^3N^{s_1+(1+\frac{\alpha}{2})s_2}}
=N  \g^{\frac32} \lesssim 1 
\iff& N^{1-\frac{3\alpha}{8}-\frac{3\theta}{2}}\lesssim 1\\
\iff& \alpha \ge \frac{8}{3}.
\end{split}
\end{equation*}

Thus the inequality \eqref{picard} does not satisfy if $0<\alpha\le 2$. We deduce that the IVP associated to the fKP-I is ill-posed for $0<\alpha\le2$ 
in $H^{s_1,s_2}(\R)$ for any $s_1, s_2\in\R$.
\end{proof}

\begin{remark}
As aforementioned the results above prove that distinguishing between {\it semilinear} and {\it quasilinear} is not obvious for nonlinear dispersive equations since that depends on a subtle interaction between the linear and nonlinear parts. Recall also that the Benjamin-Ono, equation is {\it quasilinear} (\cite{MST}) while the modified Benjamin-Ono (that is with the nonlinearity $u^2u_x$) is {\it semilinear} (\cite{KPV3}).
\end{remark}

\section{Improved well-posedness}

We use here the dispersive properties of the free group to improve the \lq\lq standard\rq\rq \, local well-posedness theory of fKP when $\alpha>0.$ We follow the strategy used by Kenig in \cite{K} for the classical KP-I equation (see also \cite{LPS2} for fractional KdV equations).

We consider the IVP associated to the fKP equation \eqref{fKP}, without distinguishing th KP-II case ($\kappa=1$) from the KP-I case ($\kappa =-1$)
\medskip

Our main result states that for any $\alpha \in (0,2]$, the IVP \eqref{fKP} is locally well-posed in $X^s(\mathbb R^2)$ for $s>2-\frac{\alpha}4$.

\begin{theorem} \label{maintheo}
Let $0<\alpha\le 2$. Define $s_{\alpha}:=2-\frac{\alpha}{4}$ and assume that $s>s_{\alpha}$. Then,
for any $u_0 \in X^s(\mathbb R^2)$, there exist a positive time $T=T(\|u_0\|_{X^s})$ (which can be chosen as a nondecreasing function of its argument) and a unique solution $u$ to the IVP \eqref{fKP} in the class 
\begin{equation} \label{maintheo.1}
C([0,T] : X^s(\mathbb R^2)) \cap L^1((0,T) : W^{1,+\infty}_x(\mathbb R^2)) \,  .
\end{equation}

Moreover, for any $0<T'<T$, there exists a neighbourhood $\mathcal{U}$ of $u_0$ in $X^s(\mathbb R^2)$ such that the flow map data solution 
\begin{displaymath}
S_{T'}^s: \mathcal{U} \to C([0,T'] : X^s(\mathbb R^2)) , \quad v_0 \mapsto v \, ,
\end{displaymath}
is continuous.
\end{theorem}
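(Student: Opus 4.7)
The plan is to implement the refined--Strichartz strategy of Kenig~\cite{K} (already used in the fractional KdV setting in \cite{LPS2}), adapted to the $X^s$-scale natural to \eqref{fKP}. The goal is to combine, on some interval $[0,T]$ with $T=T(\|u_0\|_{X^s})>0$, an energy estimate
\begin{equation*}
\|u(t)\|_{X^s}^2\le \|u_0\|_{X^s}^2+C\int_0^t\|u(t')\|_{W^{1,\infty}_x}\,\|u(t')\|_{X^s}^2\,dt'
\end{equation*}
with a refined Strichartz control of the form
\begin{equation*}
\|u\|_{L^1_T W^{1,\infty}_x}\le C\,T^{\delta}\,\|u\|_{L^\infty_T X^s}, \quad \delta>0,
\end{equation*}
valid as soon as $s>s_\alpha=2-\alpha/4$, and to close a Gronwall-type bootstrap to yield a uniform bound on $\|u\|_{L^\infty_T X^s}$.

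\emph{Step 1 (energy estimate).} For the $\|J^s_x u\|_{L^2}^2$-part of the $X^s$-norm, I apply $J^s_x$ to \eqref{fKP} and pair with $J^s_x u$: the linear dispersive contributions vanish by skew-adjointness and the nonlinear one, after writing $J^s_x(uu_x)=u\,J^s_x u_x+[J^s_x,u]u_x$, is estimated by integrating by parts the first piece (which costs $\|\partial_x u\|_{L^\infty_{xy}}\|J^s_x u\|_{L^2}^2$) and applying a Kato--Ponce commutator estimate to the second. For the $\|\partial_x^{-1}\partial_y u\|_{L^2}^2$-part, apply $L:=\partial_x^{-1}\partial_y$ to \eqref{fKP} (it commutes with the linear symbol) and pair with $Lu$: the only contribution comes from $L(uu_x)$ and, after integration by parts and using $Lu_x=u_y$, is bounded by $\|u\|_{W^{1,\infty}_x}\|u\|_{X^s}^2$.

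\emph{Step 2 (refined Strichartz).} Decompose $u=\sum_N P_N u$ via Littlewood--Paley projectors in the $x$-frequency, and partition $[0,T]$ into subintervals $I_k$ of length $|I_k|\sim N^{-\alpha/2}$ (tuned so that on each one the nonlinear flow is well-approximated by the linear one via Duhamel). On each $I_k$, use an $L^2_t L^\infty_{xy}$-Strichartz estimate for $U_\alpha(t)P_N f$ obtained from the dispersive decay of the KP fundamental solution (cf.\ Theorem~\ref{thm2.1}) by a $TT^\ast$ argument, combined with Bernstein/Sobolev embedding in $y$. Summing the short-time estimates together with the Duhamel remainders over $k$ and over dyadic frequencies $N$, the accumulated loss exactly matches the threshold $2-\alpha/4$, yielding the required $L^1_T W^{1,\infty}_x$-bound. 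This is the heart of the proof and, in my view, the main obstacle: one has to control both the anisotropy of the KP symbol $|\xi|^\alpha\xi-\kappa\eta^2/\xi$ and, in the KP-I case ($\kappa=-1$), possible resonances near $\{\xi=0\}$, so the frequency decomposition may need to be adjusted at low frequencies.

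\emph{Step 3 (existence, uniqueness, continuous dependence).} Regularize the datum via a Fourier cut-off as in the proof of Theorem~\ref{triv}, producing $u_0^{\epsilon}\in X^\infty$ for which \eqref{fKP} admits a smooth solution $u^\epsilon$ on some interval by Theorem~\ref{triv}. The uniform $X^s$-bound from Steps~1--2 gives a common existence time $T$ independent of $\epsilon$, and a weak-$\ast$ limit $u\in L^\infty_T X^s\cap L^1_T W^{1,\infty}_x$. Uniqueness within this class is standard: writing the equation for $w=u_1-u_2$ gives
\begin{equation*}
\tfrac{d}{dt}\|w\|_{L^2}^2\lesssim \bigl(\|u_1\|_{W^{1,\infty}_x}+\|u_2\|_{W^{1,\infty}_x}\bigr)\|w\|_{L^2}^2,
\end{equation*}
and Gronwall applies thanks to the $L^1_T W^{1,\infty}_x$-integrability. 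Finally, the strong continuity $u\in C([0,T];X^s)$ and the continuity of the flow map are obtained through a Bona--Smith-type argument: one compares $u$ with the smoothed solutions $u^\epsilon$ at a lower regularity $s'<s$, where uniqueness/continuity is immediate, and then upgrades to the $X^s$-topology by interpolating against the uniform higher-order bounds on $u^\epsilon$ that the Strichartz machinery provides.
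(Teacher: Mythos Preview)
Your outline follows essentially the paper's strategy (energy estimate + refined Strichartz à la Kenig + Bona--Smith), but there are a few places where your sketch underestimates the work.

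First, the refined Strichartz bound in Step~2 is not linear in $\|u\|_{L^\infty_T X^s}$: because the Duhamel remainder is the nonlinearity $u\partial_x u$, the correct estimate (Lemma~\ref{apriorilemma}) reads $f(T)\le C_s T^{\kappa_s}(1+f(T))\|u\|_{L^\infty_T X^s}$ with $f(T)=\|u\|_{L^1_T W^{1,\infty}_x}$, and one closes by a bootstrap choosing $T$ small (Lemma~\ref{a priori}). Second, and this is the main technical point you do not address: the Strichartz estimate (Corollary~\ref{cor-se}, Lemma~\ref{l3}) unavoidably costs a small $D^\delta_y$, but the $X^s$-norm controls only $J^s_x u$ and $\partial_x^{-1}\partial_y u$, not any $y$-derivative directly. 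The paper converts all such terms back into the $X^s$-scale via the anisotropic Young inequality $(1+|\xi|)^{a}|\eta|^{\delta}\lesssim (1+|\xi|)^{(a+\delta)/(1-\delta)}+|\eta|/|\xi|$ (see \eqref{rhs2a}, \eqref{refinedYoung}), and this device, together with the biparameter Leibniz rule \eqref{leibniz-2d} and the interpolation estimates of Lemma~\ref{interpolated}, is what makes the nonlinear estimate close; ``Bernstein/Sobolev in $y$'' does not capture it. Also, the paper's time subdivision uses intervals of length $T/N$ (not $N^{-\alpha/2}$), giving the count $\tfrac32+\tfrac12(1-\tfrac\alpha2)=s_\alpha$; check that your choice reproduces the same threshold.

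In Step~3 the paper does not take a weak-$\ast$ limit: it shows directly that the regularized solutions $\{u_n\}$ are Cauchy in $L^\infty_T X^s\cap L^1_T W^{1,\infty}_x$ (Proposition~\ref{prop_exist}, Lemma~\ref{lem_exist}), which requires an extra estimate on $\|\partial_x^2 u_n\|_{L^1_T L^\infty}$ (Lemma~\ref{apriori_lemma2}) and control of $\|\partial_y u_n\|_{L^2}$ and $\|J^{s+1}_x u_n\|_{L^2}$ growing like $n$, compensated by $o(n^{-1})$ factors in the differences. A weak-$\ast$ limit by itself would give neither $u\in C([0,T];X^s)$ nor the $L^1_T W^{1,\infty}_x$ membership of the limit, so if you go that route you must still run the full Bona--Smith argument afterwards. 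Finally, your worry about KP-I resonances near $\{\xi=0\}$ is unwarranted: the sign $\kappa$ plays no role in the dispersive estimates, and the paper treats both cases uniformly.
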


\begin{remark} The result in the case $\alpha=2$ corresponding to the KPI equation was already obtained by Kenig in \cite{K}. It is a challenging problem to lower the exponent $s_\alpha.$ Recall that for instance, the standard KP-I equation is well posed in the corresponding energy space (\cite{IKT}).
\end{remark}

Actually, it turns out that our proof also works for more general non-homogeneous  KP type equations. Let us describe those equations more precisely.

\begin{definition} \label{def_symb}
For $\alpha>0$, let $\mathcal{L}_{\alpha+1}$ be the Fourier multiplier defined by 
\begin{displaymath}
\mathcal{F}(\mathcal{L}_{\alpha+1}f)(\xi,\eta)=iw_{\alpha+1}(\xi)\mathcal{F}(f)(\xi,\eta) \, ,
\end{displaymath}
where  $w_{\alpha+1}$  is an odd real-valued function belonging to $C^1(\mathbb R) \cap C^{\infty}(\mathbb R \setminus \{0\})$  satisfying 
\begin{equation} \label{hypothesis_w1}
|w_{\alpha+1}(\xi)| \lesssim 1 \, , \quad \forall \, |\xi| \le \xi_0 \, ,
\end{equation}
and
\begin{equation} \label{hypothesis_w2}
|\partial^{\beta} w_{\alpha+1}(\xi)| \sim |\xi|^{\alpha+1-\beta}, \quad \forall \, |\xi| \ge \xi_0, \quad \forall \, \beta=0,1,2 \, ,
\end{equation}
for some fixed $\xi_0 >0$.
\end{definition}

\begin{remark} The following symbols satisfy the conditions \eqref{hypothesis_w1} and \eqref{hypothesis_w2}: 
\begin{enumerate}
\item{} the \emph{pure power symbol} $w_{\alpha+1}(\xi)=|\xi|^{\alpha}\xi$ corresponding to the fractional dispersive operator $\mathcal{L}_{\alpha+1}=D_x^{\alpha}\partial_x$ with $\alpha>0$.

\item{} the \emph{surface tension Whitham symbol} $\left( \frac{\tanh{\xi}}{\xi} \right)^{\frac12}(1+b \xi^2)^{\frac12}\xi$, with $b>0$ corresponding to $\alpha=\frac12$.

\item{} the \emph{intermediate long wave symbol} $\coth(\xi)|\xi|\xi$ corresponding to $\alpha=1$.
\end{enumerate}

Our result however does not include the Full Dispersion KP equation \eqref{FDbis}. A first difficulty arises there when proving the $L^1-L^\infty$ estimate because the study of the underlying oscillatory integral (see Lemma 4.7 below) cannot be reduced to a one-dimensional one as it is the case for the fKP equations.
\end{remark}

We are interested in the Cauchy problem associated to non-homogeneous KP type equations of the form
\begin{equation} \label{nh_fKP}
u_t+uu_x-\mathcal{L}_{\alpha+1}u \pm\partial_x^{-1}u_{yy}=0 \, .
\end{equation}

\begin{theorem} \label{nh_theo}
Let $0<\alpha \le 2$ and $\mathcal{L}_{\alpha+1}$ be a Fourier multiplier as in Definition \ref{def_symb}. Define $s_{\alpha}:=2-\frac{\alpha}{4}$ and assume that $s>s_{\alpha}$. Then the Cauchy problem associated to \eqref{nh_fKP} is locally well-posed in $X^s(\mathbb R^2)$ in the sense of Theorem \ref{maintheo}.
\end{theorem}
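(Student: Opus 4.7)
The plan is to replay the proof of Theorem \ref{maintheo}, substituting the general Fourier multiplier $\mathcal{L}_{\alpha+1}$ for the pure-power operator $D_x^\alpha\partial_x$, and to check that the hypotheses \eqref{hypothesis_w1}--\eqref{hypothesis_w2} suffice at each step. The argument rests on three ingredients: an $X^s$ energy estimate for \eqref{nh_fKP}; a dispersive $L^1$--$L^\infty$ decay estimate on the linear propagator $U_\alpha(t)=\exp(t(\mathcal{L}_{\alpha+1}-\partial_x^{-1}\partial_y^2))$ frequency-localized to $|\xi|\gtrsim 1$; and refined Strichartz estimates obtained from the dispersive estimate via time-localization in the spirit of \cite{K} (see also \cite{KoTz3, LPS2}).

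The energy estimate is essentially insensitive to the precise form of $w_{\alpha+1}$. Because $w_{\alpha+1}$ is real-valued and odd, $\mathcal{L}_{\alpha+1}$ is skew-adjoint on $L^2$, so its contribution vanishes when \eqref{nh_fKP} is paired with $J_x^{2s}u + \partial_x^{-2}\partial_y^2 u$. The Kato-Ponce-type commutator estimates controlling the Burgers nonlinearity $uu_x$ depend only on the norm structure of $X^s$ and on $\|u\|_{L^1_T W^{1,\infty}_x}$, the latter to be supplied by the refined Strichartz estimate. Thus the energy argument from Theorem \ref{maintheo} carries over verbatim.

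For the dispersive estimate one splits the Fourier space at $|\xi|=\xi_0$. On $|\xi|\ge\xi_0$, integrating out $\eta$ by completing the square in the Gaussian integral $\int_\R e^{iy\eta-it\eta^2/\xi}d\eta$ reduces the kernel to a one-dimensional oscillatory integral with phase $\xi\mapsto x\xi+tw_{\alpha+1}(\xi)+y^2\xi/(4t)$; since $|w''_{\alpha+1}(\xi)|\sim|\xi|^{\alpha-1}$ by \eqref{hypothesis_w2} and $|w'_{\alpha+1}(\xi)|\sim|\xi|^{\alpha}$ locates the stationary points, van der Corput's lemma yields the same decay rate as in the pure-power case. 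On $|\xi|\le\xi_0$, where $|w_{\alpha+1}|\lesssim 1$ by \eqref{hypothesis_w1}, the propagator is a bounded perturbation of a transport-type semigroup, supported in a compact $\xi$-region; the ensuing weak decay is harmless because the region is fixed and the contribution is absorbed through Sobolev embedding at the working regularity $s>2-\alpha/4>3/2$.

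The main obstacle I anticipate is the low-frequency regime, which breaks the Littlewood-Paley scale invariance exploited in the pure-power proof. I would treat the low-frequency block of $u$ as a bounded $W^{1,\infty}_x$ perturbation using $H^s\hookrightarrow W^{1,\infty}_x$ and absorb its contribution directly into the energy inequality, while applying the refined Strichartz estimate only to the high-frequency block. Once one has the refined Strichartz bound $\|u\|_{L^1_T W^{1,\infty}_x}\lesssim T^{\theta}\|u\|_{L^\infty_T X^s}$ for some $\theta>0$, the construction of solutions by parabolic regularization, their uniqueness, and the Bona-Smith continuity of the flow proceed exactly as in the homogeneous case of Theorem \ref{maintheo}.
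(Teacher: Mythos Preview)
Your overall strategy matches the paper's: the authors state explicitly that they prove Theorem~\ref{maintheo} and only indicate the modifications needed for Theorem~\ref{nh_theo}. The energy estimate (Lemma~\ref{l01}) indeed carries over because $\mathcal{L}_{\alpha+1}$ is skew-adjoint, and for high $x$-frequencies the dispersive estimate and the refined Strichartz machinery go through exactly as you describe, via the one-dimensional oscillatory integral and the hypotheses \eqref{hypothesis_w2} (this is Lemma~\ref{nh_decay}, estimate \eqref{nh_decay.2}, and Lemma~\ref{nh_l3}).

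Where you diverge from the paper is in the treatment of the low-$x$-frequency block $Q_{\le\xi_0}u$. The paper does \emph{not} absorb it by Sobolev embedding; instead it observes that the $\eta$-integration alone already yields $|t|^{-1/2}$ decay (estimate \eqref{nh_decay.1}), which produces an $L^4_TL^\infty_{xy}$ Strichartz estimate \eqref{nh_strichartz.1} and hence a $T^{3/4}$ factor in the low-frequency piece of the refined Strichartz Lemma~\ref{nh_l3}. Your alternative --- controlling $Q_{\le\xi_0}u$ in $W^{1,\infty}_x$ directly --- can be made to work and is arguably more elementary, but your stated justification ``$H^s\hookrightarrow W^{1,\infty}_x$ at regularity $s>3/2$'' is not correct as written: the isotropic embedding $H^s(\R^2)\hookrightarrow W^{1,\infty}(\R^2)$ requires $s>2$, and the working space is $X^s$, not $H^s$. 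What actually saves you is the anisotropic structure: on the support $|\xi|\le 2\xi_0$ one has $|\eta|\lesssim |\eta|/|\xi|$, so the $\partial_x^{-1}\partial_y$ component of the $X^s$ norm provides the needed $y$-integrability to place $Q_{\le\xi_0}u$ and $\partial_xQ_{\le\xi_0}u$ in $L^\infty_{xy}$ (compare the proof of Lemma~\ref{anisSobo}). Two further minor discrepancies: the refined Strichartz control in Lemma~\ref{apriorilemma} is not the clean bound $\|u\|_{L^1_TW^{1,\infty}_x}\lesssim T^\theta\|u\|_{L^\infty_TX^s}$ you quote but rather an implicit estimate $f(T)\le CT^{\kappa}(1+f(T))\|u\|_{L^\infty_TX^s}$ that closes for small $T$; and the paper constructs solutions by Bona--Smith frequency truncation of the data, not by parabolic regularization.
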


In the following we will give the proof of Theorem \ref{maintheo} and will only indicate the changes to prove Theorem \ref{nh_theo} when necessary.

\bigskip

\subsection{Commutator and interpolation estimates}
To obtain estimates for the nonlinear terms, the following Leibniz rules for fractional derivatives will be needed in Subsection \ref{nonlinest}. 

\begin{lemma} \label{Kato-Ponce}
For $s >1$, it holds that 
\begin{equation} \label{Kato-Ponce.1}
\big\| [J^s_x,f]g \big\|_{L^2(\mathbb R)} \lesssim \|\partial_xf\|_{L^{q_1}(\mathbb R)}\|J^{s-1}_xg\|_{L^{p_1}(\mathbb R)}
+\|J^s_xf\|_{L^{p_2}(\mathbb R)}\|g\|_{L^{q_2}(\mathbb R)} \, ,
\end{equation}
where $\;\frac{1}{p_i}+\frac{1}{q_i}=\frac12$, $1<p_1, p_2< \infty$ and $1< q_1, q_2\le \infty$ and 
\begin{displaymath}
[J^s_x,f]g = J^s_x(fg)-fJ^s_xg \, .
\end{displaymath}
\end{lemma}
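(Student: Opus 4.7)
The plan is to prove this via a Bony paraproduct decomposition in the $x$ variable. I would fix an inhomogeneous Littlewood-Paley family $\{P_j\}_{j\ge 0}$ acting in $x$ and set $S_k = \sum_{j\le k} P_j$. Writing
\[
fg = T_f g + T_g f + R(f,g), \qquad T_f g := \sum_k S_{k-3}f \cdot P_k g, \quad R(f,g) := \sum_{|j-k|\le 2} P_jf \cdot P_kg,
\]
and similarly $f \cdot J_x^s g = T_f (J_x^s g) + T_{J_x^s g} f + R(f, J_x^s g)$, the commutator splits as
\[
[J_x^s,f]g = \bigl(J_x^s T_f g - T_f J_x^s g\bigr) + \bigl(J_x^s T_g f - T_{J_x^s g} f\bigr) + \bigl(J_x^s R(f,g) - R(f,J_x^s g)\bigr).
\]

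The crux of the argument lies in the first (low-high) bracket, which is where the cancellation happens. For each $k$-summand $J_x^s(S_{k-3}f \cdot P_k g) - S_{k-3}f \cdot J_x^s P_k g$, the multiplier difference $\langle\xi_1+\xi_2\rangle^s - \langle\xi_2\rangle^s$ on the support $|\xi_1| \lesssim 2^{k-3} \ll 2^k \sim |\xi_2|$ factors, via the mean value theorem, as $\xi_1 \cdot m_k(\xi_1,\xi_2)$ with $|m_k| \lesssim \langle\xi_2\rangle^{s-1}$, and the rescaled family $(2^{-k(s-1)} m_k)_k$ is uniformly of Coifman-Meyer type. I would then invoke the Littlewood-Paley square-function characterization of $L^2$, the Coifman-Meyer bilinear multiplier theorem, and the Fefferman-Stein vector-valued maximal inequality to conclude that this first bracket is bounded by $\|\partial_x f\|_{L^{q_1}} \|J_x^{s-1} g\|_{L^{p_1}}$ under $\tfrac{1}{p_1} + \tfrac{1}{q_1} = \tfrac{1}{2}$.

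The other two brackets require no cancellation. In the high-low bracket one has $\langle\xi_1+\xi_2\rangle^s \sim \langle\xi_1\rangle^s$ on the support of $\widehat{P_jf} \cdot \widehat{S_{j-3} g}$ (since $|\xi_1| \gg |\xi_2|$ there), so both $J_x^s T_g f$ and $T_{J_x^s g} f$ are standard high-low paraproducts dominated by $\|J_x^s f\|_{L^{p_2}} \|g\|_{L^{q_2}}$ through Coifman-Meyer; for the resonant bracket, the bound $\langle\xi\rangle^s \lesssim \langle\xi_1\rangle^s + \langle\xi_2\rangle^s$ on the support of $\widehat{P_jf} \cdot \widehat{P_k g}$ reduces the estimate to the same kind of paraproduct inequality. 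Summing the three contributions yields the lemma.

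The hard part will be a clean identification of the $m_k$ as a uniform Coifman-Meyer family, and especially the endpoint case $q_i = \infty$: the Littlewood-Paley square function fails on $L^\infty$, so one must reformulate the step via the Fefferman-Stein maximal inequality applied pointwise to the sequence $(|P_k h|)_k$, or else replace the Coifman-Meyer bound by direct kernel estimates on the bilinear multipliers. Everything else reduces to Plancherel and H\"older's inequality.
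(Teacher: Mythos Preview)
Your paraproduct approach is a correct and standard modern route to the Kato--Ponce commutator estimate, but you should be aware that the paper does not actually prove this lemma: immediately after the statement it simply writes ``Lemma~\ref{Kato-Ponce} was proved by Kato and Ponce in~\cite{kp}'' and moves on. So there is no proof in the paper to compare against; the result is imported as a black box.

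That said, your outline is sound. The Bony decomposition into low--high, high--low, and resonant pieces, with the commutator cancellation isolated in the first piece via the mean value factorization $\langle\xi_1+\xi_2\rangle^s-\langle\xi_2\rangle^s=\xi_1\,m_k(\xi_1,\xi_2)$, is exactly how one proves this nowadays (see e.g.\ the presentations following Coifman--Meyer, or the later refinements by Grafakos and collaborators). Your identification of the endpoint $q_i=\infty$ as the delicate point is also correct: the clean workaround is to use the pointwise bound $|S_{k-3}f|\lesssim Mf$ together with Fefferman--Stein rather than trying to push a square function through $L^\infty$. The original 1988 argument of Kato and Ponce proceeds somewhat differently, going through the Coifman--Meyer multilinear theory more directly via a symbolic-calculus expansion of $(1+|\xi|^2)^{s/2}$, but the two approaches are close cousins and yield the same range of exponents.
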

Lemma \ref{Kato-Ponce} was proved by Kato and Ponce in \cite{kp}.

\begin{lemma}\label{frac-deriv} \hskip10pt
\begin{itemize}
\item[(a)] For $\sigma \in (0,1)$, it holds that
\begin{equation}\label{Leibniz-1d}
\|D^{\sigma}_x(fg)\|_{L^2(\R)} \lesssim  \|D^{\sigma}_xf\|_{L^{p_1}(\R)}\|g\|_{L^{q_1}(\R)}+\|D^{\sigma}_xg\|_{L^{p_2}(\R)}\|f\|_{L^{q_2}(\R)}
\end{equation}
with $\;\frac{1}{p_i}+\frac{1}{q_i}=\frac12$, $1<p_1, p_2\le  \infty$ and $1< q_1, q_2\le \infty$.\\
\item[(b)] For $\sigma, \beta\in (0,1)$, it holds that
\begin{equation}\label{leibniz-2d}
\begin{split}
\|D^{\sigma}_xD^{\beta}_y (fg)\|_{L^2(\R^2)} &\lesssim \|f\|_{L^{p_1}(\R^2)}\|D^{\sigma}_xD^{\beta}_yg\|_{L^{q_1}(\R^2)}+
 \|D^{\sigma}_xD^{\beta}_y f\|_{L^{p_2}(\R^2)}\|g\|_{L^{q_2}(\R^2)}\\
&\hskip15pt+ \|D^{\beta}_y f\|_{L^{p_3}(\R^2)}\|D^{\sigma}_x g\|_{L^{q_3}(\R^2)}+\|D^{\sigma}_x f\|_{L^{p_4}(\R^2)}\|D^{\beta}_y g\|_{L^{q_4}(\R^2)} \, ,
\end{split}
\end{equation}
where $\;\frac{1}{p_i}+\frac{1}{q_i}=\frac12$, $1<p_i\le \infty$,  $1<q_i\le \infty$, $i=1, 2, 3, 4$.
\end{itemize}
\end{lemma}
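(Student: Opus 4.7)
The plan is to obtain (a) by a standard Littlewood-Paley/paraproduct decomposition in the $x$ variable and then to deduce (b) by tensorizing (a) via a biparameter decomposition. Neither part requires anything beyond one-parameter multiplier theory in the end.

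For part (a), which is the classical Kenig-Ponce-Vega one-dimensional fractional Leibniz rule, I would introduce Littlewood-Paley projectors $P_j$ in $x$ and split
$$fg=\sum_{j\gg j'}P_jf\cdot P_{j'}g+\sum_{j'\gg j}P_jf\cdot P_{j'}g+\sum_{|j-j'|\le 2}P_jf\cdot P_{j'}g.$$
On the high-low paraproduct the bilinear symbol $|\xi_1+\xi_2|^{\sigma}/|\xi_1|^{\sigma}$ is a bounded Coifman-Meyer multiplier once restricted to $|\xi_1|\gg|\xi_2|$, so $D_x^{\sigma}$ is transferred onto the high-frequency factor at no cost; H\"older's inequality and the $L^p$-boundedness of the Littlewood-Paley square function ($1<p<\infty$) then yield the bound $\|D_x^{\sigma}f\|_{L^{p_1}}\|g\|_{L^{q_1}}$. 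The low-high paraproduct is symmetric and the resonant high-high piece is absorbed into either term.

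For part (b), the plan is to run the same argument simultaneously in both variables. Using tensor-product projectors $P_j^xP_k^y$, I would write
$$fg=\sum_{(j,k),(j',k')}(P_j^xP_k^yf)(P_{j'}^xP_{k'}^yg)$$
and split the double sum into four regions according to the signs of $j-j'$ and $k-k'$. In the region $\{j\ge j',\ k\ge k'\}$, both $D_x^{\sigma}$ and $D_y^{\beta}$ transfer onto $f$ via bounded biparameter Coifman-Meyer symbols, producing $\|D_x^{\sigma}D_y^{\beta}f\|_{L^{p_2}}\|g\|_{L^{q_2}}$. The opposite region produces the symmetric term on $g$, and the two mixed regions yield the cross terms $\|D_y^{\beta}f\|_{L^{p_3}}\|D_x^{\sigma}g\|_{L^{q_3}}$ and $\|D_x^{\sigma}f\|_{L^{p_4}}\|D_y^{\beta}g\|_{L^{q_4}}$.

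The main obstacle is the diagonal contribution where $|j-j'|\le 1$ or $|k-k'|\le 1$, in which cancellations can force $|\xi_1+\xi_2|\ll|\xi_1|\sim|\xi_2|$ so that the transference of a derivative to a single factor is not automatic. In the one-parameter setting this is handled by Kenig-Ponce-Vega through a further frequency sub-decomposition together with Bernstein's inequality; in the biparameter case the same maneuver must be carried out simultaneously in $\xi$ and $\eta$, which is a known technicality covered by Muscalu-Pipher-Tao-Thiele biparameter paraproduct theory. An alternative route, bypassing any biparameter multiplier theorem, is to use the Christ-Weinstein pointwise representation
$$D_x^{\sigma}u(x,y)=c_{\sigma}\int_{\R}\frac{u(x-t,y)-u(x,y)}{|t|^{1+\sigma}}\,dt$$
together with its analogue for $D_y^{\beta}$: this representation tensorizes cleanly and reduces \eqref{leibniz-2d} to H\"older-type estimates applied to double differences of $f$ and $g$, which is how I would actually execute the argument to keep the proof as elementary as possible.
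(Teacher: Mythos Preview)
The paper does not give its own proof of this lemma: immediately after the statement it simply cites Kenig--Ponce--Vega \cite{KPV} for part (a) and Muscalu--Pipher--Tao--Thiele \cite{mptt} for part (b), referring to Kenig \cite{K} for further details. Your paraproduct outline is precisely the strategy underlying those references---one-parameter Littlewood--Paley for (a) and the biparameter Coifman--Meyer theory of \cite{mptt} for (b)---so in substance you are reproducing what the paper invokes rather than diverging from it.

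One remark on your alternative route for (b) via the tensorized singular-integral representation of $D_x^{\sigma}$ and $D_y^{\beta}$: while this is an attractive way to avoid biparameter multiplier machinery, the double-difference expansion of $(fg)(x-s,y-t)-(fg)(x,y-t)-(fg)(x-s,y)+(fg)(x,y)$ does not split cleanly into a sum of four H\"older-friendly products; the cross terms genuinely couple differences of $f$ in one variable with differences of $g$ in the other, and controlling them in $L^2$ with the stated exponents ultimately requires the same square-function or vector-valued maximal inequalities that underlie the biparameter paraproduct bounds. So this route is not really more elementary than the \cite{mptt} argument the paper cites, and you should not expect it to bypass the known technicalities.
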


The proof of (a) can be seen in Kenig, Ponce and Vega \cite{KPV}. The estimate (b) was proved by Muscalu, Pipher, Tao and Thiele in \cite{mptt}. For more details see \cite{K}.

\medskip
To close the argument some \lq\lq interpolated\rq\rq \, estimates will be useful. 
\begin{lemma}\label{interpolated}
Let $0<\alpha \le 2$ and $s_{\alpha}:=2-\frac{\alpha}4$.
\begin{itemize}
\item[(a)] For $0<\delta<\frac{\alpha}4$, we have
\begin{equation}\label{kenig-(2.5)}
\|D^{s_{\alpha}-1+\delta}_xu\|_{L^{\infty}_{xy}}\lesssim \|u\|_{L^{\infty}_{xy}}+\|\partial_xu\|_{L^{\infty}_{xy}}.
\end{equation}
\item[(b)]  If $\delta_0$ is a positive constant chosen small enough, then the following holds true. There exist 
\begin{displaymath}
\left\{\begin{array}{l}  2<p_1,q_1<\infty \\ 1<r_1,s_1 <\infty \end{array}\right. \quad \text{with} \quad  \frac1{p_1}+\frac1{q_1}=\frac12, \quad \frac1{r_1}+\frac1{s_1}=1 \, ,
\end{displaymath}
$0<\theta<1$ and $0<\delta_1=\delta_1(\delta_0,\theta) \ll 1$ such that 
\begin{equation}\label{kenig-(2.6)}
\|D^{s_{\alpha}-1+\delta}_x\partial_x u\|_{L^{s_1}_TL^{q_1}_{xy}}\lesssim \|\partial_xu\|_{L^1_TL^{\infty}_{xy}}^{\theta}
\|J^{s_{\alpha}+\delta_0}_xu\|_{L^{\infty}_TL^2_{xy}}^{1-\theta},
\end{equation}
and
\begin{equation}\label{kenig-(2.7)}
\|D^{\delta}_yu\|_{L^{r_1}_TL^{p_1}_{xy}}\lesssim \|u\|_{L^1_TL^{\infty}_{xy}}^{1-\theta}\big(\|D^{\frac12}_yu\|_{L^{\infty}_TL^2_{xy}}
+\|u\|_{L^{\infty}_TL^2_{xy}}\big)^{\theta} ,
\end{equation}
for all $0<\delta<\delta_1$.
\end{itemize}
\end{lemma}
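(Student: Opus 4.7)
The plan is to prove (a) through a direct pointwise representation of the one-dimensional fractional derivative, and (b) through a Littlewood--Paley decomposition followed by Hölder in time. Before starting, I would fix $\theta\in(0,1)$ and set
$$
q_1:=\frac{2}{1-\theta},\quad p_1:=\frac{2}{\theta},\quad s_1:=\frac{1}{\theta},\quad r_1:=\frac{1}{1-\theta};
$$
then $\frac{1}{p_1}+\frac{1}{q_1}=\frac{1}{2}$ and $\frac{1}{r_1}+\frac{1}{s_1}=1$ hold by construction, and all four exponents lie in the admissible ranges because $\theta\in(0,1)$.

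For (a), since $\sigma:=s_\alpha-1+\delta = 1-\tfrac{\alpha}{4}+\delta \in (0,1)$, I would use the singular-integral representation of the one-dimensional fractional Laplacian in the $x$ variable,
$$
D^{\sigma}_x u(x,y)=c_\sigma\,\mathrm{P.V.}\int_\R \frac{u(x,y)-u(x-h,y)}{|h|^{1+\sigma}}\,dh,
$$
split at $|h|=1$, and estimate the inner region by $|h|\,\|\partial_x u\|_{L^\infty_{xy}}$ from the mean value theorem and the outer region by $2\|u\|_{L^\infty_{xy}}$. The integrals $\int_{|h|\le 1}|h|^{-\sigma}\,dh$ and $\int_{|h|\ge 1}|h|^{-1-\sigma}\,dh$ both converge precisely because $\sigma\in(0,1)$, which yields \eqref{kenig-(2.5)}.

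For (b), I would perform a Littlewood--Paley decomposition $u=\sum_k P_k u$ in $x$ for the first estimate and $u=\sum_j Q_j u$ in $y$ for the second. At fixed time, the pointwise interpolation $\|f\|_{L^q}\le\|f\|_{L^\infty}^{1-2/q}\|f\|_{L^2}^{2/q}$ (valid for $q\geq 2$) applied to each block gives
$$
\|P_ku\|_{L^{q_1}_{xy}}\lesssim \|P_ku\|_{L^\infty_{xy}}^{\theta}\|P_ku\|_{L^2_{xy}}^{1-\theta},\qquad \|Q_ju\|_{L^{p_1}_{xy}}\lesssim \|Q_ju\|_{L^\infty_{xy}}^{1-\theta}\|Q_ju\|_{L^2_{xy}}^{\theta}.
$$
Combine with the Bernstein-type bounds $\|P_ku\|_{L^\infty_{xy}}\lesssim 2^{-k}\|\partial_xu\|_{L^\infty_{xy}}$, $\|P_ku\|_{L^2_{xy}}\lesssim 2^{-k(s_\alpha+\delta_0)}\|J_x^{s_\alpha+\delta_0}u\|_{L^2_{xy}}$ for $k\ge 1$, and $\|Q_ju\|_{L^2_{xy}}\lesssim 2^{-j/2}\|D^{1/2}_yu\|_{L^2_{xy}}$ for $j\ge 1$. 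After writing $D^{s_\alpha-1+\delta}_x\partial_x=D^{s_\alpha+\delta}_x\circ\mathcal H_x$, with the Hilbert transform $\mathcal H_x$ bounded on $L^{q_1}$, multiplying by $2^{k(s_\alpha+\delta)}$ (resp.\ $2^{\delta j}$) and summing reduces both left-hand sides to geometric series with common ratios $2^{\theta(s_\alpha-1)+\delta-\delta_0(1-\theta)}$ and $2^{\delta-\theta/2}$. These converge under the conditions $\delta_0(1-\theta)>\theta(s_\alpha-1)+\delta$ and $\delta<\theta/2$, while the low-frequency blocks $P_0u$ and $Q_0u$ contribute harmless terms controlled by $\|u\|_{L^\infty_TL^2_{xy}}$. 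A final Hölder in time, invoking $\|f^{\lambda}g^{1-\lambda}\|_{L^{1/\lambda}_T}\le\|f\|_{L^1_T}^{\lambda}\|g\|_{L^\infty_T}^{1-\lambda}$ with $\lambda=\theta$ for the first estimate and $\lambda=1-\theta$ for the second, produces exactly \eqref{kenig-(2.6)} and \eqref{kenig-(2.7)}.

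The main obstacle is the bookkeeping of the several small parameters rather than any analytic subtlety. One must fix $\delta_0$ first (as it is the auxiliary loss prescribed by Theorem \ref{maintheo}), then choose $\theta\in(0,1)$ small enough that $\theta(s_\alpha-1)<\tfrac12\delta_0(1-\theta)$ (possible since $s_\alpha\le 2-\tfrac{\alpha}{4}<2$), and only then declare
$$
\delta_1:=\min\!\Bigl(\tfrac{\theta}{2},\;\delta_0(1-\theta)-\theta(s_\alpha-1),\;\tfrac{\alpha}{4}\Bigr),
$$
so that every $\delta<\delta_1$ simultaneously satisfies the three constraints from (a), (b.1), and (b.2). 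Once this hierarchy $\delta_0\gg\theta\gg\delta_1$ is respected, the Littlewood--Paley sums all converge and the three estimates close at once.
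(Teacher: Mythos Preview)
Your proposal is correct. Part (b) follows essentially the same route as the paper: a Littlewood--Paley decomposition (in $x$ for \eqref{kenig-(2.6)}, in $y$ for \eqref{kenig-(2.7)}), blockwise $L^{\infty}$--$L^2$ interpolation giving the exponents $(s_1,q_1)$ and $(r_1,p_1)$, Bernstein to absorb the frequency weights, and then a geometric sum under the constraints $\theta(s_\alpha-1)+\delta<\delta_0(1-\theta)$ and $\delta<\theta/2$. The only cosmetic difference is that the paper interpolates $\tilde Q_k(\partial_x u)$ directly, whereas you interpolate $P_k u$ and recover the $\partial_x$ through $D_x^{s_\alpha+\delta}\mathcal H_x$; the two computations produce the same exponent and the same summability condition.

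Part (a) is where you genuinely diverge. The paper again uses a Littlewood--Paley split: the low-frequency piece is handled because $|\xi|^{s_\alpha-1+\delta}\chi(\xi)$ has an $L^1$ inverse Fourier transform, and each high-frequency block satisfies $\|D_x^{s_\alpha-1+\delta}Q_k u\|_{L^\infty_{xy}}\lesssim 2^{k(s_\alpha-2+\delta)}\|\partial_x u\|_{L^\infty_{xy}}$, summable because $s_\alpha-2+\delta=-\tfrac{\alpha}{4}+\delta<0$. Your singular-integral argument is more direct and avoids the dyadic machinery entirely; it exploits exactly the same numerology (convergence at $|h|=0$ needs $\sigma<1$, at $|h|=\infty$ needs $\sigma>0$), and is arguably the more transparent proof of this particular inequality. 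The Littlewood--Paley version, on the other hand, unifies (a) with the machinery already set up for (b) and extends more readily to non-pointwise multipliers. Either way the estimate closes.
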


These estimates were proved by Kenig in \cite{K} (see estimates (2.5), (2.6), and (2.7)). For the sake of completeness, we will give the proofs in the appendix.

\subsection{Linear Estimates}

Consider the linear IVP
\begin{equation}\label{linearfKP}
\begin{cases}
\partial_t u -D^{\alpha}_x\partial_x u+\kappa\partial_x^{-1}\partial_y^2u=0,\;\;\;(x,y)\in\R^2, t>0,\\
u(x,y,0)=u_0(x,y)
\end{cases}
\end{equation}
where $\kappa=\pm1$ and whose solution is given by
\begin{equation}
u(x,y,t)=U_{\alpha}(t)u_0(x,y):= \big( e^{it(|\xi|^{\alpha}\xi-\kappa\frac{\eta^2}{\xi})}\widehat{u}_0(\xi,\eta)\big)^{\vee}(x,y).
\end{equation}

\smallskip
Solutions of the linear problem \eqref{linearfKP} satisfy the following decay estimate
\begin{lemma} \label{decay} For $\alpha\in (0,2] $, it holds that
\begin{equation}
\|D^{\frac{\alpha}{2}-1}_x U_{\alpha}(t)\phi\|_{L^{\infty}(\R^2)}\le c\,|t|^{-1}\|\phi\|_{L^1}.
\end{equation}
\end{lemma}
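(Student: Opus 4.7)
Since $U_\alpha(t)\phi = K_t \ast \phi$ where $K_t$ is the Schwartz-class distribution with Fourier transform $e^{it(|\xi|^\alpha\xi-\kappa\eta^2/\xi)}$, by Young's inequality it is enough to show that the kernel
\begin{equation*}
L_t(x,y) := c\int_{\R^2} |\xi|^{\frac{\alpha}{2}-1} e^{i(x\xi+y\eta)+it(|\xi|^\alpha\xi-\kappa\eta^2/\xi)}\,d\xi\, d\eta
\end{equation*}
satisfies $\|L_t\|_{L^\infty_{x,y}} \lesssim |t|^{-1}$. The plan is to peel off the variable $\eta$ by an exact Gaussian integration, reducing to a one-dimensional oscillatory integral whose amplitude has a mild power-type behavior and whose phase is of the form $\tilde x \xi + t\,\sgn(\xi)|\xi|^{\alpha+1}$; a parabolic rescaling then extracts a factor $|t|^{-1/2}$ from each of the two integrations.

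For the $\eta$-integral, the phase $y\eta - \kappa t\eta^2/\xi$ is purely quadratic in $\eta$, so the Fresnel formula
\begin{equation*}
\int_{\R} e^{i(y\eta - \kappa t \eta^2/\xi)}\, d\eta = \sqrt{\pi|\xi|/|t|}\; e^{-i\frac{\pi}{4}\sgn(\kappa t \xi)}\; e^{iy^2\xi/(4\kappa t)}
\end{equation*}
applies for every $\xi\ne 0$. Substituting into the expression for $L_t$ and setting $\tilde x := x + y^2/(4\kappa t)$, one obtains
\begin{equation*}
L_t(x,y) = c\,|t|^{-1/2}\int_{\R} |\xi|^{\frac{\alpha-1}{2}}\,e^{-i\frac{\pi}{4}\sgn(\kappa t \xi)}\,e^{i(\tilde x \xi + t\,\sgn(\xi)|\xi|^{\alpha+1})}\,d\xi.
\end{equation*}
Thus the claim reduces to the uniform bound
\begin{equation*}
\sup_{\tilde x \in \R}\left|\int_{\R} |\xi|^{\frac{\alpha-1}{2}}\, e^{-i\frac{\pi}{4}\sgn(\kappa t \xi)}\,e^{i(\tilde x \xi + t\,\sgn(\xi)|\xi|^{\alpha+1})}\,d\xi\right| \lesssim |t|^{-1/2}.
\end{equation*}

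To see this, split the $\xi$-integral into $\{\xi>0\}$ and $\{\xi<0\}$ (symmetric under $\xi\to-\xi$ and conjugation) and perform the change of variables $\xi = |t|^{-1/(\alpha+1)}v$. A direct computation of the exponents shows that the factor $|t|^{-1/2}$ comes out and what remains is
\begin{equation*}
F(a) := \int_0^\infty v^{\frac{\alpha-1}{2}}\, e^{i(av + \sgn(t) v^{\alpha+1})}\,dv,\qquad a := \tilde x\, |t|^{-1/(\alpha+1)}\in\R,
\end{equation*}
(times a harmless unimodular constant from the $\sgn$-factor, which is constant on each half-line). The uniform bound $|F(a)|\lesssim 1$ is obtained by a standard oscillatory integral analysis of the phase $\psi(v) = av + \sgn(t)v^{\alpha+1}$, whose critical point exists only when $a\,\sgn(t)<0$ and is located at $v_0 = (|a|/(\alpha+1))^{1/\alpha}$, with $\psi''(v) = \sgn(t)\alpha(\alpha+1)v^{\alpha-1}$. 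Away from a neighborhood of $v_0$ one integrates by parts using $|\psi'(v)|\gtrsim\max(|a|,v^\alpha)$; this is integrable at infinity since $\alpha>0$, and integrable at the origin because $\frac{\alpha-1}{2}>-1$. On a neighborhood of $v_0$ of length $\sim v_0$, the van der Corput lemma with $|\psi''|\gtrsim v_0^{\alpha-1}$ and amplitude of size $v_0^{(\alpha-1)/2}$ yields a contribution of order $v_0^{(\alpha-1)/2}\cdot v_0^{-(\alpha-1)/2}= 1$.

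The main technical point is the uniform control of $F(a)$ in the stationary-phase regime $a\,\sgn(t)<0$; the amplitude $v^{(\alpha-1)/2}$ is only H\"older continuous near the origin when $\alpha<1$, so one must be careful to place the van der Corput window away from $v=0$ and treat the small-$v$ region by absolute integrability. For the non-homogeneous symbols considered in Theorem \ref{nh_theo} the reduction to a one-dimensional oscillatory integral is identical, and hypotheses \eqref{hypothesis_w1}--\eqref{hypothesis_w2} give the same lower bound $|\psi''(v)|\gtrsim v^{\alpha-1}$ for $v\geq v_0$ large enough, so the same argument applies with only cosmetic changes.
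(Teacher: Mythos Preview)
Your proof is correct and follows essentially the same route as the paper: integrate out $\eta$ via the Fresnel formula to gain a factor $|t|^{-1/2}$, then rescale $\xi\mapsto |t|^{-1/(\alpha+1)}\xi$ to extract the remaining $|t|^{-1/2}$ and reduce to the uniform boundedness of the one-dimensional oscillatory integral $J(\lambda)=\int_{\R}|\xi|^{(\alpha-1)/2}e^{i\sgn(\xi)\pi/4}e^{i\lambda\xi}e^{i\xi|\xi|^{\alpha}}\,d\xi$. The only difference is that the paper invokes the results of Kenig--Ponce--Vega \cite{KPV1} for this last step, whereas you sketch the stationary-phase/van der Corput argument directly; your sketch is adequate and correctly identifies the only delicate point (keeping the van der Corput window away from the origin when $\alpha<1$).
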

\begin{proof}  We can always assume without loss of generality that $\kappa=-1$ and $t>0$. We observe that
\begin{equation*} 
U_{\alpha}u_0(x,y)= I(\cdot,\cdot,t)\ast u_0(x,y) \, ,
\end{equation*}
where
\begin{equation*}
 I(x,y,t)=(2\pi)^{-1} \int_{\R^2} e^{it(|\xi|^{\alpha}\xi+\frac{\eta^2}{\xi})} e^{i(x\xi+y\eta)}\,d\xi d\eta \, .
 \end{equation*}
 
Next we will study the decay properties of the following oscillatory integral,
\begin{equation}\label{oi1}
D^{\frac{\alpha}{2}-1}_x I(x,y,t)=(2\pi)^{-1} \int_{\R^2}|\xi|^{\frac{\alpha}{2}-1} e^{it(|\xi|^{\alpha}\xi+\frac{\eta^2}{\xi})} e^{i(x\xi+y\eta)}\,d\xi d\eta \, .
\end{equation}

Performing the change of variables  $\eta'=t^{\frac12}|\xi|^{-\frac{1}{2}}\,\eta$ yields
\begin{displaymath}
D^{\frac{\alpha}{2}-1}_x I(x,y,t)=\frac{c}{t^{\frac12}}
\int_{\R_{\xi}}|\xi|^{\frac{(\alpha-1)}{2}} \Big (\int_{\R_{\eta}} e^{it^{-\frac{1}{2}}|\xi|^{\frac12}y\eta+i\sgn(\xi)\eta^2)} d\eta \Big) e^{i(x\xi+t|\xi|^{\alpha}\xi)}\,d\xi \, .
\end{displaymath}
It follows then by using the formula $\big(e^{i\delta \eta^2} \big)^{\vee}(x)=c|\delta|^{-\frac12}e^{i\sgn(\delta)\frac\pi4}e^{i\delta^{-1}x^2}$, for any $\delta \in \mathbb R$, $\delta \neq 0$, that
\begin{align}\label{oi2}
D^{\frac{\alpha}{2}-1}_x I(x,y,t)&=\frac{c}{t^{\frac12}} \int_{\R} |\xi|^{\frac{(\alpha-1)}{2}} \,e^{i\sgn(\xi)\frac{\pi}{4}} e^{i\frac{y^2\xi}{4t}}e^{i(x\xi+t|\xi|^{\alpha}\xi)}\,d\xi \nonumber \\
&=\frac{c}{t}\int_{\R} |\xi|^{\frac{(\alpha-1)}{2}}  \, e^{i\sgn(\xi)\frac{\pi}{4}}  
\,\exp\Big(i\xi\Big(\frac{x}{t^{\frac{1}{\alpha+1}}}+\frac{y^2}{4t^{\frac{\alpha+2}{\alpha+1}}}\Big)\Big) e^{i\xi|\xi|^{\alpha}}\,d\xi \, .
\end{align}
Define
\begin{equation}
J(\l)= \int_{\R}  |\xi|^{\frac{(\alpha-1)}{2}} \, e^{i\sgn(\xi)\frac{\pi}{4}}  
\,e^{i\l\xi} e^{i\xi|\xi|^{\alpha}}\,d\xi.
\end{equation}

To complete the proof we need to establish that $J(\lambda)$ is bounded for any $\lambda\in\R$, as soon as $\alpha>0$. This can be done using Kenig, Ponce,
Vega theory in \cite{KPV1} where they showed that the decay result  is sharp (see also \cite{LPS2}). This gives us the desired result.
\end{proof}

\begin{remark}
Lemma \ref{decay} generalizes the decay estimate for the KP equations (corresponding to $\alpha=2$) obtained by Saut in \cite{S}.
\end{remark}

We explain how to obtain a similar decay estimate for the linear solutions associated to \eqref{nh_fKP}. Since the symbol $w_{\alpha+1}$ is non-homogeneous, we need to distinguish the low and high frequencies in $x$.
\begin{lemma} \label{nh_decay} 
For $\alpha\in (0,2] $, let $e^{t(\mathcal{L}_{\alpha+1}\mp \partial_x^{-1}\partial_y^2)}$ be the unitary group associated to \eqref{nh_fKP} and let $\chi \in C_0^{\infty}(|\xi| <2)$. We define by $Q_{\le \xi_0}$ and $Q_{\ge  \xi_0}$ the Fourier multipliers of respective symbols $\chi(\xi/\xi_0)$ and $\psi_0(\xi)=1-\chi(\xi/\xi_0)$. Then,
\begin{equation} \label{nh_decay.1}
\| Q_{ \le \xi_0}e^{t(\mathcal{L}_{\alpha+1}\mp \partial_x^{-1}\partial_y^2)}\phi\|_{L^{\infty}(\R^2)}\le c\,|t|^{-\frac12}\|\phi\|_{L^1}
\end{equation}
and
\begin{equation} \label{nh_decay.2}
\|D^{\frac{\alpha}{2}-1}_x Q_{ \ge \xi_0}e^{t(\mathcal{L}_{\alpha+1}\mp \partial_x^{-1}\partial_y^2)}\phi\|_{L^{\infty}(\R^2)}\le c\,|t|^{-1}\|\phi\|_{L^1}.
\end{equation}
\end{lemma}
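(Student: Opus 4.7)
The plan is to represent each operator as convolution with an explicit kernel
\[
K_{\le}(\cdot,\cdot,t) = \mathcal F^{-1}\!\Bigl(\chi(\xi/\xi_0)\, e^{it(w_{\alpha+1}(\xi)\mp\eta^2/\xi)}\Bigr), \quad K_{\ge}(\cdot,\cdot,t) = \mathcal F^{-1}\!\Bigl(\psi_0(\xi)|\xi|^{\alpha/2-1} e^{it(w_{\alpha+1}(\xi)\mp\eta^2/\xi)}\Bigr),
\]
so that by Young's inequality it suffices to establish $\|K_{\le}(\cdot,\cdot,t)\|_{L^\infty}\lesssim t^{-1/2}$ and $\|K_{\ge}(\cdot,\cdot,t)\|_{L^\infty}\lesssim t^{-1}$. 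In both kernels the $\eta$-integration is a Fresnel integral of the form $\int e^{\pm it\eta^2/\xi+iy\eta}d\eta = c|\xi/t|^{1/2}e^{i\mathrm{sgn}(\pm\xi)\pi/4}e^{\mp iy^2\xi/(4t)}$, which yields the universal factor $t^{-1/2}$ and only modifies the $\xi$-phase by a translation of $x$ that plays no role in an $L^\infty$ bound.

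For \eqref{nh_decay.1}, what remains after $\eta$-integration is the one-dimensional integral
\[
K_{\le}(x,y,t)=\frac{c}{t^{1/2}}\int_{|\xi|\le 2\xi_0}\chi(\xi/\xi_0)|\xi|^{1/2}e^{i\mathrm{sgn}(\pm\xi)\pi/4}e^{i(tw_{\alpha+1}(\xi)+\lambda\xi)}d\xi,
\]
with $\lambda=x\mp y^2/(4t)$. The $\xi$-integral extends over a bounded set, the integrand is dominated by $|\xi|^{1/2}\mathbbm 1_{[-2\xi_0,2\xi_0]}$, and by hypothesis \eqref{hypothesis_w1} the phase is finite; hence $|K_{\le}(x,y,t)|\lesssim t^{-1/2}$ uniformly in $(x,y)$, which is the desired estimate.

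For \eqref{nh_decay.2}, the analogous computation yields
\[
K_{\ge}(x,y,t)=\frac{c}{t^{1/2}}\int_{|\xi|\ge\xi_0}\psi_0(\xi)|\xi|^{(\alpha-1)/2}e^{i\mathrm{sgn}(\pm\xi)\pi/4}e^{i(tw_{\alpha+1}(\xi)+\lambda\xi)}d\xi,
\]
and the task reduces to showing that the $\xi$-integral is $O(t^{-1/2})$ uniformly in $\lambda$. I would perform the change of variable $\mu=t^{1/(\alpha+1)}\xi$, so that the phase becomes $t^{-1/(\alpha+1)}\lambda\mu+tw_{\alpha+1}(t^{-1/(\alpha+1)}\mu)$. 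In the region where $|\mu|$ is large (corresponding to the asymptotic regime of $w_{\alpha+1}$) the homogeneous approximation $tw_{\alpha+1}(t^{-1/(\alpha+1)}\mu)\approx \mu|\mu|^\alpha$ reduces the analysis to the Kenig–Ponce–Vega oscillatory integral $J(\tilde\lambda)=\int|\mu|^{(\alpha-1)/2}e^{i(\tilde\lambda\mu+\mu|\mu|^\alpha)}d\mu$ already invoked in the proof of Lemma \ref{decay}, which is bounded uniformly in $\tilde\lambda$ by \cite{KPV1}. The correction $tw_{\alpha+1}(t^{-1/(\alpha+1)}\mu)-\mu|\mu|^\alpha$ has bounded derivatives thanks to \eqref{hypothesis_w2}, so it only perturbs the phase in a controlled way and does not spoil the van der Corput / stationary-phase bounds.

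The main obstacle is the handling of $K_{\ge}$ in the transition region where $|\mu|$ is of order one after rescaling: here $w_{\alpha+1}$ is not yet in its power-law regime, and the Kenig–Ponce–Vega bound does not apply verbatim. One would treat this region by a direct cutoff argument: on a bounded $\mu$-interval the integrand is controlled in $L^1_\mu$, while on the complementary large-$\mu$ region one applies the homogeneous bound together with a van der Corput estimate based on $|\partial_\xi^2 w_{\alpha+1}(\xi)|\sim|\xi|^{\alpha-1}$ for $|\xi|\ge\xi_0$. Verifying uniformity in both $\lambda$ and $t$ across these regimes (in particular as $t\to 0^+$, where the rescaling pushes most of the support into the non-asymptotic region) is the delicate part of the argument.
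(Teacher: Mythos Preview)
Your treatment of the low-frequency kernel $K_{\le}$ is essentially the paper's argument: after the Fresnel integral in $\eta$, the remaining $\xi$-integral has a compactly supported, bounded integrand, so the trivial bound gives $t^{-1/2}$.

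For the high-frequency kernel $K_{\ge}$, however, you take an unnecessary detour that also contains an error. The rescaling $\mu=t^{1/(\alpha+1)}\xi$ is the natural move for the \emph{homogeneous} symbol $|\xi|^{\alpha}\xi$ (as in Lemma~\ref{decay}), but here it creates the very ``transition region'' you then struggle with. Moreover, your claim that the correction $tw_{\alpha+1}(t^{-1/(\alpha+1)}\mu)-\mu|\mu|^{\alpha}$ has bounded derivatives is not justified by hypothesis~\eqref{hypothesis_w2}: that hypothesis only says $|\partial^{\beta}w_{\alpha+1}(\xi)|\sim|\xi|^{\alpha+1-\beta}$, not that $w_{\alpha+1}(\xi)-|\xi|^{\alpha}\xi$ is of lower order. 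For instance $w_{\alpha+1}(\xi)=2|\xi|^{\alpha}\xi$ satisfies the hypothesis, yet the correction equals $\mu|\mu|^{\alpha}$ and is not a bounded perturbation.

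The paper avoids all of this by \emph{not} rescaling. After the Fresnel step one has
\[
J_{\alpha+1}(\lambda)=\int_{\R}\psi_0(\xi)\,|\xi|^{(\alpha-1)/2}\,e^{i\sgn(\xi)\pi/4}\,e^{i\lambda\xi}\,e^{itw_{\alpha+1}(\xi)}\,d\xi,
\]
and one appeals directly to Theorem~2.2 of \cite{KPV1}, which is stated for \emph{general} phases $\phi$ satisfying $|\phi''(\xi)|\sim|\xi|^{\alpha-1}$ (not just pure powers). Since $\psi_0$ restricts to $|\xi|\ge\xi_0$, hypothesis~\eqref{hypothesis_w2} guarantees exactly this for $\phi(\xi)=tw_{\alpha+1}(\xi)$, and the theorem yields $|J_{\alpha+1}(\lambda)|\lesssim|t|^{-1/2}$ uniformly in $\lambda$. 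There is no transition region because the cutoff $\psi_0$ already places you entirely in the regime where the symbol bounds hold. Your fallback van~der~Corput suggestion is essentially this, but you should invoke it from the start rather than after a failed comparison to the homogeneous case.
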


\begin{proof}
The beginning of the proof of \eqref{nh_decay.2} is similar to the one of Lemma \ref{decay}. If one defines 
\begin{equation*}
 D^{\frac{\alpha}{2}-1}_xI_{\alpha+1}(x,y,t)=(2\pi)^{-1} \int_{\R^2} |\xi|^{\frac{\alpha}{2}-1}\psi_0(\xi)e^{it(w_{\alpha+1}(\xi)+\frac{\eta^2}{\xi})} e^{i(x\xi+y\eta)}\,d\xi d\eta \, ,
 \end{equation*}
 one gets arguing as in \eqref{oi2} that
\begin{equation*}
D^{\frac{\alpha}{2}-1}_xI_{\alpha+1}(x,y,t)=\frac{c}{t^{\frac12}}\int_{\R} |\xi|^{\frac{(\alpha-1)}{2}} \psi_0(\xi)  \, e^{i\sgn(\xi)\frac{\pi}{4}}  
e^{i\xi\big(x+\frac{y^2}{4t}\big)} e^{itw_{\alpha+1}(\xi)}\,d\xi \, .
\end{equation*}
Define 
\begin{equation*}
J_{\alpha+1}(\l)= \int_{\R}  \psi_0(\xi)|\xi|^{\frac{(\alpha-1)}{2}} \, e^{i\sgn(\xi)\frac{\pi}{4}}  
\,e^{i\l\xi} e^{itw_{\alpha+1}(\xi)}\,d\xi.
\end{equation*}
To conclude the proof of estimate \eqref{nh_decay.2}, it is then enough to prove that 
\begin{equation*}
\big|J_{\alpha+1}(\l)\big|\le c|t|^{-\frac12} \, .
\end{equation*}
But this is a consequence of Theorem 2.2 in \cite{KPV1} by using the hypotheses on the symbol $w_{\alpha+1}$ in Definition \ref{def_symb}. 

The proof of estimate \eqref{nh_decay.1} is actually easier. If one defines 
\begin{equation*}
 \widetilde{I}_{\alpha+1}(x,y,t)=(2\pi)^{-1} \int_{\R^2} \chi(\xi/\xi_0)e^{it(w_{\alpha+1}(\xi)+\frac{\eta^2}{\xi})} e^{i(x\xi+y\eta)}\,d\xi d\eta \, ,
 \end{equation*}
one gets arguing as in \eqref{oi2} that
\begin{equation*}
\widetilde{I}_{\alpha+1}(x,y,t)=\frac{c}{t^{\frac12}}\int_{\R}  \chi(\xi/\xi_0)  \, e^{i\sgn(\xi)\frac{\pi}{4}}  
e^{i\xi\big(x+\frac{y^2}{4t}\big)} e^{itw_{\alpha+1}(\xi)}\,d\xi \, .
\end{equation*}
Thus, we deduce trivially that 
\begin{equation*}
\big|\widetilde{I}_{\alpha+1}(x,y,t)\big|\lesssim {t^{-\frac12}}\, ,
\end{equation*}
which yields estimate \eqref{nh_decay.1}.
\end{proof}

By using the Stein-Thomas argument, we deduce the Strichartz estimates for solutions of \eqref{linearfKP}.
\begin{proposition}\label{strichartz}
Let $0<\alpha \le 2$. Then, the following  estimates hold
\begin{equation}\label{strichartz1}
\|D^{\frac{1}{q}(\frac{\alpha}{2}-1)}_xU_{\alpha}(t)\phi\|_{L^q_tL^r_{xy}} \le c\,\|\phi\|_{L^2_{xy}}
\end{equation}
and 
\begin{equation}
\|\int_0^t D^{\frac{2}{q}(\frac{\alpha}{2}-1)}_xU_{\alpha}(t-t') F(t')\,dt'\|_{L^q_TL^r_{xy}}\le c\,\|F\|_{L^{q'}_TL^{r'}_{xy}}
\end{equation}
for 
\begin{equation*}
\left\{\begin{array}{l}
2\le r <\infty \\ 2\le q\le \infty
\end{array} \right.
\hskip10pt\text{satisfying}\hskip10pt   \dfrac{1}{r}+\dfrac{1}{q}=\dfrac{1}{2} \hskip10pt\text{and}\hskip10pt \dfrac{1}{q}+\dfrac{1}{q'}=\dfrac{1}{r}+\dfrac{1}{r'}=1 \, .
\end{equation*}
\end{proposition}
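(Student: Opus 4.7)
The plan is to apply the classical $TT^*$/Stein--Thomas argument, using as the only inputs the unitarity of $U_\alpha(t)$ on $L^2(\R^2)$ and the dispersive estimate of Lemma~\ref{decay}. First I interpolate between these two bounds: regarding $D_x^{z(\alpha/2-1)}U_\alpha(t)$ as an analytic family of operators in $0\le \Re z\le 1$, the line $\Re z=0$ gives an $L^2$-isometry (because $|\xi|^{iy(\alpha/2-1)}$ is a unitary $L^2$-multiplier), while the line $\Re z=1$ gives an $L^1\to L^\infty$ bound with operator norm $\lesssim|t|^{-1}$ by Lemma~\ref{decay} (with at worst polynomial growth in the imaginary part, which is admissible). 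Stein interpolation therefore yields, for every $\theta\in[0,1]$,
\begin{equation*}
\|D_x^{\theta(\alpha/2-1)}U_\alpha(t)\phi\|_{L^r_{xy}}\le c\,|t|^{-\theta}\|\phi\|_{L^{r'}_{xy}},\qquad \frac{1}{r}=\frac{1-\theta}{2}.
\end{equation*}

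Next, I specialize to $\theta = 2/q$, which is exactly the relation $\frac{1}{q}+\frac{1}{r}=\frac{1}{2}$ appearing in the statement. Setting $\sigma:=\frac{1}{q}(\alpha/2-1)$ and $T\phi(t):=D_x^\sigma U_\alpha(t)\phi$, a direct calculation gives
\begin{equation*}
TT^*F(t)=\int_{\R}D_x^{2\sigma}U_\alpha(t-t')F(t')\,dt',
\end{equation*}
and the interpolated decay above, combined with Minkowski's inequality in the spatial variables, produces the pointwise-in-$t$ bound
\begin{equation*}
\|TT^*F(t)\|_{L^r_{xy}}\le c\int_{\R}|t-t'|^{-2/q}\|F(t')\|_{L^{r'}_{xy}}\,dt'.
\end{equation*}
When $2<q<\infty$ one has $0<2/q<1$, so the Hardy--Littlewood--Sobolev inequality in the $t$ variable (with the one-dimensional Riesz kernel $|t|^{-2/q}$ and the scaling identity $1+\tfrac{1}{q}=\tfrac{1}{q'}+\tfrac{2}{q}$) gives $\|TT^*F\|_{L^q_tL^r_{xy}}\le c\|F\|_{L^{q'}_tL^{r'}_{xy}}$, i.e.\ the non-retarded inhomogeneous Strichartz estimate. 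The identity $\|T\|^2=\|TT^*\|$ then simultaneously delivers the homogeneous estimate $\|T\phi\|_{L^q_tL^r_{xy}}\le c\|\phi\|_{L^2}$.

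To upgrade the non-retarded inhomogeneous bound to the retarded one with $\int_0^t$, I would invoke the Christ--Kiselev lemma, applicable since $q'<q$ whenever $q>2$. The endpoint $q=\infty$, $r=2$ reduces to the $L^2$-isometry of $U_\alpha$, while the opposite endpoint $q=2$, $r=\infty$ is excluded by the hypothesis $r<\infty$. No substantive obstacle remains in this proof: the entire dispersive content of the argument is packaged into Lemma~\ref{decay} (which in turn relies on the one-dimensional van der Corput-type estimates of \cite{KPV1}), and what sits on top of it is the standard Stein--Thomas/$TT^*$ machinery together with Hardy--Littlewood--Sobolev.
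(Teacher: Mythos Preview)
Your proof is correct and follows exactly the approach the paper indicates: the paper does not spell out any details here, merely stating before the proposition that the Strichartz estimates follow from Lemma~\ref{decay} ``by using the Stein--Thomas argument.'' Your write-up is a faithful and accurate unpacking of that standard machinery (Stein complex interpolation between unitarity and the $L^1\!\to\! L^\infty$ decay, $TT^*$, Hardy--Littlewood--Sobolev, Christ--Kiselev), and the care you take with the imaginary powers and the excluded endpoint $(q,r)=(2,\infty)$ is appropriate.
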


\smallskip
Since the endpoint Strichartz estimate corresponding to $(q,r)=(2,+\infty)$ is not known, we need to lose a little bit of regularity in both space directions $x$ and $y$ in order to control this norm.
\begin{corollary}\label{cor-se} Let $\alpha \in (0,2]$. For each $T>0$, $\delta>0$, there exists $\widetilde{\kappa}_{\delta}>0$ such that 
\begin{equation}
\|U_{\alpha}(t)\phi\|_{L^2_TL^{\infty}_{xy}} \le c_{\delta}T^{\widetilde{\kappa}_{\delta}}\big(\|\phi\|_{L^2}+\|D^{\frac{1}{2}(1-\frac{\alpha}{2})+\delta}_x\phi\|_{L^2}+\|D^{\frac12(1-\frac{\alpha}{2})}_xD^{\delta}_y\phi\|_{L^2}\big).
\end{equation}
\end{corollary}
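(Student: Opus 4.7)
Since the endpoint Strichartz pair $(2,\infty)$ is not admissible in Proposition \ref{strichartz}, the idea is to use Hölder in time to trade it for a non-endpoint pair $(q,r)$ with $q>2$ close to $2$, and then recover $L^\infty_{xy}$ from $L^r_{xy}$ at the cost of a small amount of regularity in $x$ and in $y$ via 2D Sobolev embedding. Concretely, fix $\nu>0$ small (to be chosen as a function of $\delta$) and set $q=2+\nu$, $r$ such that $\frac1q+\frac1r=\frac12$, so $r\to\infty$ as $\nu\to 0$. Hölder's inequality on $(0,T)$ gives
\begin{displaymath}
\|U_\alpha(t)\phi\|_{L^2_TL^\infty_{xy}}\le T^{\frac12-\frac1q}\|U_\alpha(t)\phi\|_{L^q_TL^\infty_{xy}},
\end{displaymath}
which determines $\widetilde\kappa_\delta=\frac12-\frac1q=\frac{\nu}{2(2+\nu)}>0$.

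Next, for $s>\frac2r$ small, use the isotropic 2D Sobolev embedding $H^{s,r}(\R^2)\hookrightarrow L^\infty(\R^2)$ together with the elementary norm equivalence $\|J^s f\|_{L^r}\lesssim \|f\|_{L^r}+\|D_x^s f\|_{L^r}+\|D_y^s f\|_{L^r}$ to obtain, pointwise in $t$,
\begin{displaymath}
\|U_\alpha(t)\phi\|_{L^\infty_{xy}}\lesssim \|U_\alpha(t)\phi\|_{L^r_{xy}}+\|U_\alpha(t)D_x^s\phi\|_{L^r_{xy}}+\|U_\alpha(t)D_y^s\phi\|_{L^r_{xy}}
\end{displaymath}
(using that $U_\alpha(t)$ commutes with $D_x^s$ and $D_y^s$). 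Taking $L^q_T$ and applying the Strichartz estimate \eqref{strichartz1} of Proposition \ref{strichartz} to each of the three terms yields, with $\mu:=\frac{1}{q}(1-\frac{\alpha}{2})$,
\begin{displaymath}
\|U_\alpha(t)\phi\|_{L^q_TL^\infty_{xy}}\lesssim \|D_x^{\mu}\phi\|_{L^2}+\|D_x^{\mu+s}\phi\|_{L^2}+\|D_x^\mu D_y^s\phi\|_{L^2}.
\end{displaymath}

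It then remains to absorb these three norms into the right-hand side of the corollary. Since $\mu<\frac12(1-\frac\alpha2)$, monotonicity of $L^2$-based Sobolev norms (via Plancherel and Young's inequality) handles the first term and also the second, provided $\mu+s\le \frac12(1-\frac\alpha2)+\delta$. For the mixed term I would use the bi-parameter interpolation
\begin{displaymath}
\big\||\xi|^\mu|\eta|^s\widehat\phi\big\|_{L^2}\le \big\||\xi|^{\mu'}|\eta|^{\delta}\widehat\phi\big\|_{L^2}^\theta\|\widehat\phi\|_{L^2}^{1-\theta},\quad \mu'=\tfrac12(1-\tfrac\alpha2),
\end{displaymath}
which is valid as soon as $\mu=\theta\mu'$ and $s=\theta\delta$, forcing $\theta=\frac{2}{q}$ and $s=\frac{2\delta}{q}$. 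Since $\frac2r=1-\frac2q<\frac{2\delta}{q}$ whenever $\nu<2\delta$, the constraint $s>\frac2r$ is compatible, and a final Young inequality produces $\|D_x^{\mu'}D_y^\delta\phi\|_{L^2}+\|\phi\|_{L^2}$.

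\textbf{Main obstacle.} The only real difficulty is the simultaneous juggling of exponents: $q,s$ must be chosen so that $s>\frac2r$ (Sobolev), $\mu+s\le \mu'+\delta$ (pure $D_x$ absorption), and $s=\frac{2\delta}{q}$ together with $\mu=\frac{2}{q}\mu'$ (mixed interpolation). When $\alpha<2$ all these strict inequalities have margin and are satisfied for $\nu=\nu(\delta)>0$ sufficiently small; the case $\alpha=2$ degenerates ($\mu,\mu'\to 0$) but in that regime the $D_x$-loss disappears altogether and only the $D_y^\delta$ control is needed, so the scheme still closes.
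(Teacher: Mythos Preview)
Your proposal is correct and follows essentially the same strategy as the paper: H\"older in time to pass from $L^2_T$ to $L^q_T$ with $q>2$ close to $2$, Sobolev embedding to trade $L^\infty_{xy}$ for $L^r_{xy}$ at the cost of $\delta$ derivatives, and then the Strichartz estimate \eqref{strichartz1}. The paper's proof is a bit more streamlined in one respect: rather than splitting $J^s$ into $1+D_x^s+D_y^s$ at the $L^r$ level (which requires a Mikhlin-type multiplier argument) and then carefully matching the exponents via the bi-parameter interpolation you describe, the paper simply keeps the isotropic Bessel potential $J^\delta_{x,y}$ through the Strichartz step, obtaining $\|J^\delta_{x,y}D_x^{\frac12(1-\frac\alpha2)}\phi\|_{L^2}$, and only then splits the symbol $\langle(\xi,\eta)\rangle^\delta|\xi|^{\frac12(1-\frac\alpha2)}\lesssim 1+|\xi|^{\frac12(1-\frac\alpha2)+\delta}+|\xi|^{\frac12(1-\frac\alpha2)}|\eta|^\delta$ via Plancherel. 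This avoids your exponent juggling entirely: one just needs $\delta>2/r$, which is arranged by taking $r$ large. Your route works, but the extra constraints you track (the forced choice $s=\frac{2\delta}{q}$, the separate treatment of $\alpha=2$) are artifacts of splitting too early.
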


\begin{proof}  Using Sobolev embedding, $\delta >\frac{2}{r}$,  for $r$ sufficiently large satisfying the conditions in Proposition \ref{strichartz}
and applying  \eqref{strichartz1} we have
\begin{equation}
\|U_{\alpha}(t)\phi\|_{L^2_TL^{\infty}_{xy}}\le c_{\delta} T^{\frac{q-2}{2q}}\,\|U_{\alpha}(t) J^{\delta}_{x,y}\phi\|_{L^q_TL^r_{xy}}\le c_{\delta}T^{\widetilde{\kappa}_{\delta}}\| J^{\delta}_{x,y}D_x^{\frac12(1-\frac{\alpha}2)}\phi\|_{L^2}.
\end{equation}
The proof of Corollary \ref{cor-se} follows.
\end{proof}

\begin{remark}  \label{nh_strichartz}
The estimates in Proposition \ref{strichartz} and Corollary \ref{cor-se} also hold if one replaces $U_{\alpha}(t)$ by $Q_{\ge \xi_0}e^{t(\mathcal{L}_{\alpha+1}\mp \partial_x^{-1}\partial_y^2)}$. In order to handle the low frequencies in $x$, we deduce directly from \eqref{nh_decay.1}, the Stein-Thomas argument and the Hardy-Littlewood-Sobolev estimate that 
\begin{equation} \label{nh_strichartz.1}
\|Q_{\le \xi_0}e^{t(\mathcal{L}_{\alpha+1}\mp \partial_x^{-1}\partial_y^2)}\phi\|_{L^4_TL^{\infty}_{xy}} \lesssim \|\phi\|_{L^2_{xy}}.
\end{equation}
\end{remark}

As in \cite{K} the main ingredient in our analysis is a refined Strichartz estimates for solutions of the linear equation
\begin{equation}\label{lp2}
\partial_t u -D^{\alpha}_x\partial_x u+\partial_x^{-1}\partial_y^2u=F.
\end{equation}

More precisely,

\begin{lemma}\label{l3}
Let $\alpha\in(0,2]$, $s_{\alpha}:= \frac32+\frac12(1-\frac{\alpha}{2})$, $0<\delta\le 1$ and $T>0$. Suppose that $w$ is a solution of the linear problem \eqref{lp2}.
Then, there exists $\frac12<\kappa_{\delta}<1$ and $c_{\delta}>0$ such that
\begin{align}\label{refstrichartz}
\|\partial_x w\|_{L^1_TL^{\infty}_{xy}}\le &\;c_{\delta} T^{\kappa_{\delta}} \Big( \underset{[0,T]}{\sup}\| J^{s_{\alpha}+2\delta}_xw\|_{L^2_{xy}}+
\underset{[0,T]}{\sup}\| J^{s_{\alpha}+\delta}_xD^{\delta}_yw\|_{L^2_{xy}} \nonumber \\
& + \int_0^T (\|J^{s_{\alpha}-1+2\delta}_xF(\cdot,t)\|_{L^2_{xy}}+ \|J^{s_{\alpha}-1+\delta}_xD^{\delta}_yF(\cdot,t)\|_{L^2_{xy}})dt \Big).
\end{align}
\end{lemma}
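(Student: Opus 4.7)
The plan is to adapt the refined Strichartz strategy of Kenig \cite{K} for KP-I, now based on the Strichartz estimate with loss provided by Corollary \ref{cor-se}. The main idea is to chop the time interval $[0,T]$ into subintervals suitably tuned to the $x$-frequency of the piece being estimated, apply Corollary \ref{cor-se} on each subinterval together with a H\"older inequality in time, and then sum.

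I would first perform a Littlewood--Paley decomposition in $x$, $w=P_{\le 1}w+\sum_{N\ge 1,\,\text{dyadic}}P_Nw$; the low-frequency piece is controlled by Bernstein and elementary energy-type inequalities. For each high-frequency block $P_Nw$, divide $[0,T]$ into $K\simeq TN^{\beta}$ consecutive subintervals $\{I_j\}_{j=1}^{K}$ of length $|I_j|\simeq N^{-\beta}$, with the exponent $\beta$ of order one to be tuned below. On each $I_j$, start Duhamel at its left endpoint $t_j$:
\begin{equation*}
P_Nw(t)=U_\alpha(t-t_j)P_Nw(t_j)+\int_{t_j}^{t}U_\alpha(t-s)P_NF(s)\,ds,\qquad t\in I_j.
\end{equation*}
Apply Corollary \ref{cor-se} on $I_j$ to the homogeneous piece (and the same estimate combined with Minkowski to the Duhamel piece), then pass to an $L^1$-in-time norm by H\"older, $\|\cdot\|_{L^1_{I_j}}\le|I_j|^{1/2}\|\cdot\|_{L^2_{I_j}}$. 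Writing $\tau=\tfrac12(1-\tfrac{\alpha}{2})$ and $\widetilde{\kappa}_\delta=\tfrac12-\tfrac1q$ for the H\"older exponent of Corollary~\ref{cor-se}, this yields
\begin{align*}
\|P_N\partial_xw\|_{L^1_{I_j}L^\infty_{xy}}&\lesssim|I_j|^{1/2+\widetilde{\kappa}_\delta}\bigl(N^{1+\tau+\delta}\|P_Nw(t_j)\|_{L^2}+N^{1+\tau}\|D^\delta_yP_Nw(t_j)\|_{L^2}\bigr)\\
&\quad+|I_j|^{1/2+\widetilde{\kappa}_\delta}\int_{I_j}\bigl(N^{1+\tau+\delta}\|P_NF(s)\|_{L^2}+N^{1+\tau}\|D^\delta_yP_NF(s)\|_{L^2}\bigr)ds.
\end{align*}

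Summing over $j$ produces a factor $T\,|I_j|^{\widetilde{\kappa}_\delta-1/2}$ in front of the sup-norm data terms (because the number of subintervals is $T/|I_j|$) and the single prefactor $|I_j|^{1/2+\widetilde{\kappa}_\delta}$ in front of $\int_0^T$ for the forcing terms. Since $|I_j|\le T$, one has the elementary extraction $|I_j|^{1/2+\widetilde{\kappa}_\delta}\le T^{\kappa_\delta}|I_j|^{1/2+\widetilde{\kappa}_\delta-\kappa_\delta}$ for any $\kappa_\delta\in[0,\tfrac12+\widetilde{\kappa}_\delta]$, which trades part of the $N$-gain for a power $T^{\kappa_\delta}$. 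The parameters $\beta$, $q$ and $\kappa_\delta$ are then tuned so that the resulting $N$-exponents match the targets $s_\alpha+2\delta$ on the data and $s_\alpha-1+2\delta$ on the forcing, and so that $\kappa_\delta$ is strictly greater than $\tfrac12$; this forces $\beta$ of order one and the upper bound $\kappa_\delta\le\tfrac12+\widetilde{\kappa}_\delta-(\tfrac12-\delta)/\beta$. The dyadic sum over $N$ is then closed by Cauchy--Schwarz, absorbing an arbitrarily small piece of the $\delta$ surplus between the actual $N$-exponent achieved and the one stated in the lemma.

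The main obstacle is precisely this balancing of $T$- and $N$-exponents. A direct Strichartz+H\"older without subintervals produces the clean factor $T^{1/2+\widetilde{\kappa}_\delta}$ but with $N$-exponent $1+\tau+\delta=s_\alpha-\tfrac12+\delta$, which is fine for the data term but exceeds the target $s_\alpha-1+2\delta$ on the forcing whenever $\delta<\tfrac12$. Subintervals of length $\sim N^{-\beta}$ fix the $N$-count on the forcing but yield no explicit power of $T$; the extraction $|I_j|^{1/2+\widetilde{\kappa}_\delta}\le T^{\kappa_\delta}N^{-\beta(1/2+\widetilde{\kappa}_\delta-\kappa_\delta)}$ is what allows a good $N$-exponent and the required $T$-power to coexist. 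The remaining technical work is to verify simultaneously all admissibility and Sobolev-embedding conditions inherited from Proposition~\ref{strichartz} and Corollary~\ref{cor-se} (in particular the constraint $\delta>2/r$ used in the proof of the corollary) remain compatible with the chosen $\beta$, $q$ and $\kappa_\delta$ in the full range $\alpha\in(0,2]$.
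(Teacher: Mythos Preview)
Your overall strategy is the right one and matches the paper's: Littlewood--Paley in $x$, frequency-adapted subdivision of $[0,T]$, Duhamel on each subinterval, Corollary~\ref{cor-se}, and a H\"older step to pass from $L^2_t$ to $L^1_t$. However, your specific choice of subinterval length $|I_j|\simeq N^{-\beta}$, \emph{independent of $T$}, together with the extraction $|I_j|^{1/2+\widetilde\kappa_\delta}\le T^{\kappa_\delta}|I_j|^{1/2+\widetilde\kappa_\delta-\kappa_\delta}$, does not close for small $\delta$. Recall from the proof of Corollary~\ref{cor-se} that the Sobolev loss $\delta>2/r$ forces $\widetilde\kappa_\delta=\tfrac12-\tfrac1q<\tfrac{\delta}{2}$. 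Your own constraints then read: from the forcing (to get $\kappa_\delta>\tfrac12$) one needs $\widetilde\kappa_\delta>(\tfrac12-\delta)/\beta$, while from the data one needs $\beta(\tfrac12-\widetilde\kappa_\delta)<\tfrac12+\delta$, i.e.\ $\beta<(1+2\delta)/(1-\delta)$. Combining, $\widetilde\kappa_\delta>(\tfrac12-\delta)(1-\delta)/(1+2\delta)$, which for small $\delta$ forces $\widetilde\kappa_\delta$ close to $\tfrac12$; this is incompatible with $\widetilde\kappa_\delta<\tfrac{\delta}{2}$. Since the lemma must hold for arbitrarily small $\delta$ (and is applied precisely in that regime in Lemma~\ref{apriorilemma}), this is a genuine gap.

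The fix is exactly what the paper does: take the subinterval length to be $|I_j|\simeq T/\Lambda$ (so $\Lambda$ subintervals in total). Then the H\"older factor $|I_j|^{1/2}=(T/\Lambda)^{1/2}$ already carries a $T^{1/2}$, and bounding $|I_j|^{\widetilde\kappa_\delta}\le T^{\widetilde\kappa_\delta}$ gives the full prefactor $T^{1/2+\widetilde\kappa_\delta}=T^{\kappa_\delta}$ with $\kappa_\delta\in(\tfrac12,1)$ for free, with no extraction and no competing $N$-constraint. The $\Lambda$-count then matches directly: $\Lambda\cdot(T/\Lambda)^{1/2}\cdot\Lambda=\Lambda^{3/2}$ on the data (giving $J_x^{s_\alpha+\delta}$) and $\Lambda\cdot(T/\Lambda)^{1/2}=\Lambda^{1/2}$ on the forcing (giving $J_x^{s_\alpha-1+\delta}$), after which the dyadic sum costs one extra $\delta$.
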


\begin{proof} To establish the estimate \eqref{refstrichartz} we use the argument in \cite{K}. 

First, we use a Littlewood-Paley decomposition of $w$ in the $\xi$ variable.  That is, let $\varphi \in C^{\infty}_0(\frac12<|\xi|<2)$ and $\chi\in C_0^{\infty}(|\xi|<2)$ such that 
$\underset{k=1}{\overset{\infty}{\sum}} \varphi(2^{-k}\xi) +\chi(\xi)=1.$

For $\lg=2^k$, $k\ge 1$, define $w_{\lg}= Q_k w$, where $\widehat{Q_kw}(\xi,\eta)= \varphi(2^{-k}\xi)\widehat{w}(\xi,\eta)$,
$w_0= Q_0 w$, and $\widehat{Q_0w}(\xi,\eta)= \chi(\xi)\widehat{w}(\xi,\eta)$. 

We decompose $\tilde{w}=\underset{k\ge1}{\sum} Q_k(w)$, then $w=\tilde{w}+w_0$. We first estimate $\|\partial_x w_0\|_{L^1_TL^{\infty}_{xy}}$. Noticing that $w_0$ is solution to the integral equation
\begin{equation} \label{se_ie}
\partial_xw_0(t)=U_{\alpha}(t)\partial_xQ_0w(0)+\int_0^tU_{\alpha}(t-t')\partial_xQ_0F(\cdot,t')dt' \, ,
\end{equation}
we deduce combining H\"older's inequality in time, Corollary \ref{cor-se} and Bernstein's inequalities that
\begin{align} \label{rse5}
\|\partial_x w_0\|_{L^1_TL^{\infty}_{xy}} &\lesssim T^{\frac12} \Big(\|U_{\alpha}(t)Q_0w(0)\|_{L^2_TL^{\infty}_{xy}}+\int_0^T\|U_{\alpha}(t-t')Q_0F(\cdot,t')\|_{L^{2}_TL^{\infty}_{xy}}dt'\Big)
\nonumber \\&\lesssim T^{\frac12+\widetilde{\kappa}_{\delta}}\Big(\| J^{\delta}_yw(0)\|_{L^2}+\int_0^T  \|J^{\delta}_yF(\cdot,t)\|_{L^2_{xy}}dt \Big).
\end{align}

Next we estimate $\|\partial_x w_{\Lambda}\|_{L^1_TL^{\infty}_{xy}}$ when $\Lambda=2^{k}$, $k \ge 1$. To do so, we split $[0,T]=\underset{j}{\cup} I_j$, where
$I_j=[a_j, b_j]$ and $b_j-a_j=cT/\lg$, $j=1, \dots, \lg$. 

Since  $\xi\eta( 2^{-k}\xi)$ has Fourier inverse whose $L^1$ norm in $x$ is bounded by $C\lg$ and using the Cauchy-Schwarz
inequality it follows that 
\begin{equation}\label{rs1}
\begin{split}
\|\partial_x w_{\lg}\|_{L^1_TL^{\infty}_{xy}}&\le \underset{j}{\sum} \|\partial_x w_{\lg}\|_{L^1_{I_j}L^{\infty}_{xy}}
\lesssim \lg \underset{j}{\sum} \|w_{\lg}\|_{L^1_{I_j}L^{\infty}_{xy}}\lesssim  (T\lg)^{\frac12} \, \underset{j}{\sum} \|w_{\lg}\|_{L^2_{I_j}L^{\infty}_{xy}} \, .
\end{split}
\end{equation}

Next employing Duhamel's formula, in each $I_j$, we obtain, for $t\in I_j$,
\begin{equation}\label{rs2}
w_{\lg}(t)= U_{\alpha}(t-a_j) w_{\lg}(\cdot, a_j)+\int_{a_j}^t U_{\alpha}(t-t')F_{\lg}(t') dt'. 
\end{equation}

Thus combining \eqref{rs2} with \eqref{rs1}, we deduce that $\|\partial_x w_{\lg}\|_{L^1_TL^{\infty}_{xy}}$ can be bounded by
\begin{displaymath}
c(T\lg)^{\frac12} \underset{j}{\sum} \Big(\|U_{\alpha}(t-a_j) w_{\lg}(a_j)\|_{L^2_{I_j}L^{\infty}_{xy}}
+  \big\|\int_{a_j}^t U_{\alpha}(t-t') F_{\lg}(t')\,dt'\big\|_{L^2_{I_j}L^{\infty}_{xy}}\Big) \, .
\end{displaymath}
Then, it follows from Corollary \ref{cor-se} that
\begin{align}
\|\partial_x w_{\lg}\|_{L^1_TL^{\infty}_{xy}} &
\lesssim \lg^{\frac12}T^{\frac12+\widetilde{\kappa}_{\delta}} \, \underset{j}{\sum} \Big(\| J^{\frac12(1-\frac{\alpha}2)+\delta}_x \,w_{\lg}(a_j)\|_{L^2_{xy}}
+ \|D^{\frac12(1-\frac{\alpha}2)}_xD^{\delta}_y\, w_{\lg}(a_j)\|_{L^2_{xy}}\Big) \nonumber\\
&\hskip10pt + \lg^{\frac12}T^{\kappa_{\delta}}\, \underset{j}{\sum} \int_{I_j}\Big(\|J^{\frac12(1-\frac{\alpha}2)+\delta}_x \, F_{\lg}(t')\|_{L^2_{xy}}+\|D^{\frac12(1-\frac{\alpha}2)}_xD^{\delta}_y \, F_{\lg}(t')\|_{L^2_{xy}}\Big)\,dt \nonumber\\
&\lesssim T^{\kappa_{\delta}}\underset{t\in [0,T]}{\sup}\,\big(\| J^{s_{\alpha}+\delta}_x \,w_{\lg}(t)\|_{L^2_{xy}}+\| J^{s_{\alpha}}_xD^{\delta}_y\,w_{\lg}(t)\|_{L^2_{xy}}\big)\nonumber\\
&\hskip15pt + T^{\kappa_{\delta}}\int_0^T \Big(\|J^{s_{\alpha}-1+\delta}_x \, F_{\lg}(t')\|_{L^2_{xy}}+\|J^{s_{\alpha}-1}_xD^{\delta}_y \, F_{\lg}(t')\|_{L^2_{xy}}\Big)\,dt' \, , \label{rs3}
 \end{align}
 where $\kappa_{\delta} :=\frac12+\widetilde{\kappa}_{\delta} \in (\frac12,1)$.
 
 Thus,
 \begin{equation}\label{rse4}
 \begin{split}
 \|\partial_x \tilde{w}\|_{L^1_TL^{\infty}_{xy}} 
 &\lesssim  T^{\kappa_{\delta}}\underset{k \ge 1}{\sum}\; \Big(\underset{[0,T]}{\sup} \big(\|J^{s_{\alpha}+\delta}_xQ_k(w)\|_{L^2_{xy}}
 +\|J^{s_{\alpha}}_xD_y^{\delta} Q_k(w)\|_{L^2_{xy}})\\
 &\hskip25pt +\int_0^T \big(\|J_x^{s_{\alpha}-1+\delta} Q_k(F)\|_{L^2_{xy}}+\|J_x^{s_{\alpha}-1+\delta}D^{\delta}_y Q_k(F)\|_{L^2_{xy}}\big)\,dt \Big)\\
 &\le  T^{\kappa_{\delta}}\underset{k \ge 1}{\sum}\; 2^{-k\delta}\Big(\sup_{[0,T]}\big( \|J^{s_{\alpha}+2\delta}_xQ_k(w)\|_{L^2_{xy}} +\|J^{s_{\alpha}+\delta}_xD_y^{\delta} Q_k(w)\|_{L^2_{xy}}\big)\\
 &\hskip25pt +\int_0^T\big( \|J^{s_{\alpha}-1+2\delta}_x Q_k(F)\|_{L^2_{xy}}+\|J_x^{s_{\alpha}-1+\delta}D^{\delta}_y Q_k(F)\|_{L^2_{xy}}\big)dt\Big).
 \end{split}
\end{equation}

Therefore estimate \eqref{refstrichartz} follows gathering \eqref{rse5} and \eqref{rse4}.
\end{proof}

A similar refined Strichartz estimate also holds for the linear version of \eqref{nh_fKP}
\begin{equation} \label{nh_lp2}
\partial_tu-\mathcal{L}_{\alpha+1}u \pm\partial_x^{-1}\partial_y^2u=F \, .
\end{equation}
\begin{lemma}\label{nh_l3}
Let $\alpha\in(0,2]$, $s_{\alpha}:= \frac32+\frac12(1-\frac{\alpha}{2})$, $0<\delta\le 1$ and $T>0$. Suppose that $w$ is a solution of the linear problem \eqref{nh_lp2}.
Then, there exists $\frac12<\kappa_{\delta}<1$ and $c_{\delta}>0$ such that
\begin{align}\label{nh_refstrichartz}
\|\partial_x w\|_{L^1_TL^{\infty}_{xy}}\le &\;c_{\delta} \big(T^{\kappa_{\delta}}+T^{\frac34}\big) \Big( \underset{[0,T]}{\sup}\| J^{s_{\alpha}+2\delta}_xw\|_{L^2_{xy}}+
\underset{[0,T]}{\sup}\| J^{s_{\alpha}+\delta}_xD^{\delta}_yw\|_{L^2_{xy}} \nonumber \\
& + \int_0^T (\|J^{s_{\alpha}-1+2\delta}_xF(\cdot,t)\|_{L^2_{xy}}+ \|J^{s_{\alpha}-1+\delta}_xD^{\delta}_yF(\cdot,t)\|_{L^2_{xy}})dt \Big).
\end{align}
\end{lemma}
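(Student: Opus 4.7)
The plan is to imitate the proof of Lemma \ref{l3}, adapting it to account for the different decay rates of Lemma \ref{nh_decay} at low and high frequencies in $\xi$. I would first perform a Littlewood--Paley decomposition $w = w_0 + \tilde w$ in the $\xi$-variable, where $w_0 := Q_{\le \xi_0} w$ collects the low frequencies and $\tilde w := \sum_{k\ge 1} Q_k w$ collects the high frequencies, choosing the dyadic pieces $Q_k$ so that they are supported in $|\xi| \gtrsim \xi_0$ and $Q_{\le \xi_0} + \sum_{k\ge 1} Q_k = 1$.

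For the high-frequency part $\tilde w$, the argument of Lemma \ref{l3} carries over essentially verbatim. By Remark \ref{nh_strichartz}, Proposition \ref{strichartz} and Corollary \ref{cor-se} remain valid once $U_\alpha(t)$ is replaced by $Q_{\ge \xi_0} e^{t(\mathcal{L}_{\alpha+1} \mp \partial_x^{-1}\partial_y^2)}$, precisely because the high-frequency decay \eqref{nh_decay.2} is of the same form $t^{-1}$ with the same $(\alpha/2-1)$-derivative loss as in the homogeneous situation. Splitting $[0,T]$ into $\Lambda$ subintervals of length $T/\Lambda$, using Duhamel on each and applying Bernstein in $\xi$ as in \eqref{rs1}--\eqref{rse4}, one recovers the contribution
\begin{equation*}
\|\partial_x \tilde w\|_{L^1_T L^\infty_{xy}} \lesssim T^{\kappa_\delta} \Big( \sup_{[0,T]}\|J_x^{s_\alpha+2\delta} w\|_{L^2_{xy}} + \sup_{[0,T]}\|J_x^{s_\alpha+\delta} D_y^\delta w\|_{L^2_{xy}} + \cdots\Big),
\end{equation*}
with the same Duhamel counterparts for $F$.

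The new ingredient concerns the low-frequency part $w_0$. Here Lemma \ref{nh_decay} only provides the $t^{-1/2}$ decay \eqref{nh_decay.1}, which by Stein--Thomas plus Hardy--Littlewood--Sobolev produces the weaker Strichartz bound \eqref{nh_strichartz.1}, i.e. an $L^4_T L^\infty_{xy}$ estimate instead of the $L^2_T L^\infty_{xy}$ one exploited for the low-frequency piece in Lemma \ref{l3}. Starting from the analogue of \eqref{se_ie} with $Q_0$ replaced by $Q_{\le \xi_0}$, applying Hölder in time to convert $L^4_T$ to $L^1_T$ at the cost of $T^{3/4}$, and using Bernstein (which absorbs the factor $\partial_x$ on frequencies $|\xi| \le 2\xi_0$), one gets
\begin{equation*}
\|\partial_x w_0\|_{L^1_T L^\infty_{xy}} \lesssim T^{3/4} \Big( \|w(0)\|_{L^2_{xy}} + \int_0^T \|F(t)\|_{L^2_{xy}}\,dt \Big),
\end{equation*}
and both terms on the right are trivially dominated by the norms appearing in the right-hand side of \eqref{nh_refstrichartz}. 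Summing this with the high-frequency contribution gives the announced $T^{\kappa_\delta} + T^{3/4}$ prefactor.

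The main obstacle is precisely this asymmetry in the decay rates: one cannot treat low and high frequencies on equal footing, and the weaker $t^{-1/2}$ low-frequency decay is what forces the extra $T^{3/4}$ term in \eqref{nh_refstrichartz}. A minor but necessary bookkeeping point is to verify that all Bernstein losses at the lowest scale (where $\partial_x$ is absorbed on $Q_{\le \xi_0}$) are harmless, which is immediate since $\xi_0$ is a fixed constant and the relevant Fourier multipliers have bounded symbols there.
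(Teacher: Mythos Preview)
Your proposal is correct and follows essentially the same route as the paper: decompose $w=w_0+\tilde w$ at the scale $\xi_0$, reuse the proof of Lemma~\ref{l3} verbatim on $\tilde w$ via Remark~\ref{nh_strichartz}, and handle $w_0$ through the $L^4_TL^\infty_{xy}$ Strichartz estimate \eqref{nh_strichartz.1} together with H\"older in time (producing the $T^{3/4}$) and Bernstein to absorb $\partial_x$. The paper's proof is exactly this, stated more tersely.
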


\begin{proof} The proof is similar to the one of Lemma \ref{l3}. With the same notations, we make the decomposition $w=\widetilde{w}+w_0$, where $$\widetilde{w}=\sum_{k \ge k_0}Q_k(w) \quad  \text{and}  \quad w_0=\big(\chi(\xi/\xi_0)\widehat{w}(\xi,\eta) \big)^{\vee} \, ,$$
and $\xi_0$ is given in Definition \ref{def_symb}. The estimate for $\widetilde{w}$ follows exactly in the same way than the one in Lemma \ref{l3} since the Strichartz estimates are similar.

To handle the low frequency part $w_0$, we apply the Strichartz estimate \eqref{nh_strichartz.1} on the integral equation \eqref{se_ie} to get
\begin{align*}
\|\partial_x w_0\|_{L^1_TL^{\infty}_{xy}} &\lesssim T^{\frac34} \Big(\|U_{\alpha}(t)Q_0w(0)\|_{L^4_TL^{\infty}_{xy}}+\int_0^T\|U_{\alpha}(t-t')Q_0F(\cdot,t')\|_{L^{4}_TL^{\infty}_{xy}}dt'\Big)
\nonumber \\&\lesssim T^{\frac34}\Big(\| w(0)\|_{L^2}+\int_0^T  \|F(\cdot,t)\|_{L^2_{xy}}dt \Big).
\end{align*}
This concludes the proof of Lemma \ref{nh_l3}.
\end{proof}

%\subsection{Nonlinear estimates} \label{nonlinest}

\subsection{Energy estimates}
Without loss of generality, we will work in the rest of the paper with the equation \eqref{fKP} with the constant $\kappa=-1$. 
The proofs in the case $\kappa=1$ or for the non-homogenous equation \eqref{nh_fKP} follow similarly.

\medskip
Using the definition \eqref{Xnorm}, energy estimates and Kato-Ponce commutators estimates (Lemma \ref{Kato-Ponce}) we obtain the
following {\it a priori} estimate.

\begin{lemma}\label{l01} Let $\alpha \in (0,2]$ and $T>0$.  Let $u \in C([0,T] ; X^{\infty}(\mathbb R^2))$ be a solution of the IVP \eqref{fKP}.  Then, there exists a positive constant $C_0$ such that
\begin{equation} \label{enest}
\|u\|_{L^{\infty}_TX^{\sigma}_{xy}}^2 \le \| u_0\|_{X^{\sigma}}^2+C_0\|\partial_xu\|_{L^1_TL^{\infty}_{xy}}\|u\|_{L^{\infty}_TX^{\sigma}_{xy}}^2 \, ,
\end{equation}
for any $\sigma \in [1,5]$.
\end{lemma}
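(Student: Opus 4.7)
The plan is to close an energy estimate for the $X^\sigma$ norm by treating its two components $\|J_x^\sigma u\|_{L^2}$ and $\|\partial_x^{-1}\partial_y u\|_{L^2}$ separately. For each component, I apply the corresponding Fourier multiplier ($J_x^\sigma$ or $\partial_x^{-1}\partial_y$) to equation \eqref{fKP} and pair the result with $J_x^\sigma u$, resp.\ $\partial_x^{-1}\partial_y u$, in $L^2(\mathbb R^2)$. In both cases the dispersive operators $D_x^\alpha\partial_x$ and $\partial_x^{-1}\partial_y^2$ are skew-adjoint on $L^2(\mathbb R^2)$, since their Fourier symbols $i|\xi|^\alpha\xi$ and $i\eta^2/\xi$ are purely imaginary and odd in $\xi$, and moreover they commute with both multipliers, so their contributions vanish after integration.

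For the $J_x^\sigma$ piece, the only task is to control $\int J_x^\sigma(uu_x)\,J_x^\sigma u\,dxdy$. I would split
$J_x^\sigma(uu_x)=u\,\partial_x(J_x^\sigma u)+[J_x^\sigma,u]u_x$. The first piece, after integration by parts in $x$, produces $-\tfrac12\int(\partial_x u)(J_x^\sigma u)^2\,dxdy$, which is bounded by $\tfrac12\|\partial_x u\|_{L^\infty_{xy}}\|J_x^\sigma u\|_{L^2}^2$. For the commutator, I would apply the one-dimensional Kato--Ponce estimate (Lemma \ref{Kato-Ponce}) slice-wise in $y$, with $p_1=p_2=2$ and $q_1=q_2=\infty$, and then integrate in $y$; together with the trivial bound $\|J_x^{\sigma-1}u_x\|_{L^2_x}\le\|J_x^\sigma u\|_{L^2_x}$, this yields
$$\|[J_x^\sigma,u]u_x\|_{L^2_{xy}}\lesssim \|\partial_x u\|_{L^\infty_{xy}}\|J_x^\sigma u\|_{L^2_{xy}}.$$
The restriction $\sigma\ge 1$ is what allows the invocation of Lemma \ref{Kato-Ponce}, while $\sigma\le 5$ is just a convenient upper bound that will amply suffice in the sequel.

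For the $\partial_x^{-1}\partial_y$ piece, setting $v=\partial_x^{-1}\partial_y u$, I would exploit the key algebraic identity $\partial_x^{-1}\partial_y(uu_x)=\partial_x^{-1}\partial_x\partial_y(u^2/2)=uu_y=u\,\partial_x v$. Applying $\partial_x^{-1}\partial_y$ to \eqref{fKP} therefore yields $v_t+u\partial_x v-D_x^\alpha\partial_x v-\partial_x^{-1}\partial_y^2 v=0$. Pairing with $v$ in $L^2$, the dispersive terms again vanish by skew-adjointness, and the nonlinear contribution reduces via integration by parts in $x$ to
$$\int u(\partial_x v)v\,dxdy=-\tfrac12\int(\partial_x u)v^2\,dxdy,$$
which is bounded by $\tfrac12\|\partial_x u\|_{L^\infty_{xy}}\|v\|_{L^2}^2$. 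Summing the two resulting differential inequalities for $\tfrac{d}{dt}\|J_x^\sigma u\|_{L^2}^2$ and $\tfrac{d}{dt}\|\partial_x^{-1}\partial_y u\|_{L^2}^2$, integrating on $[0,t]$, and taking the supremum over $t\in[0,T]$ produces the desired estimate \eqref{enest}.

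The one delicate point is the slice-wise application of Kato--Ponce and its integration in $y$; the rest is integration by parts exploiting skew-adjointness. The hypothesis $u\in C([0,T];X^\infty(\mathbb R^2))$ ensures that every pointwise operation (in particular the action of $\partial_x^{-1}$) and every integration by parts is unconditionally justified, so that the estimate can then be transported to rougher data later by a standard Bona--Smith-type regularization argument.
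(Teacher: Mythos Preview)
Your argument is correct and follows essentially the same route as the paper: energy estimates on the two components of the $X^\sigma$ norm, with the Kato--Ponce commutator estimate (applied in $x$ and integrated in $y$) handling the $J_x^\sigma$ piece, and the identity $\partial_x^{-1}\partial_y(uu_x)=u\,\partial_y u=u\,\partial_x(\partial_x^{-1}\partial_y u)$ followed by an integration by parts handling the $\partial_x^{-1}\partial_y$ piece. The only cosmetic difference is that you package the second computation via the auxiliary variable $v=\partial_x^{-1}\partial_y u$, whereas the paper writes the same integrations by parts directly.
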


\begin{proof}
We first deal with the $\|J^s_x(\cdot)\|_{L^2_{xy}}$ part of the $X^s$ norm. Applying $J^s_x$ to the equation in \eqref{fKP}, multiplying by $J^s_xu$ and integrating in space leads to 
\begin{displaymath}
\frac12 \frac{d}{dt} \int_{\mathbb R^2} (J^s_xu)^2 \, dxdy=-\int_{\mathbb R^2} [J^s_x,u]\partial_xu \, J^s_xu \, dxdy-\int_{\mathbb R^2}uJ^s_x\partial_xu J^s_xu dxdy \, .
\end{displaymath}
We use the commutator estimate \eqref{Kato-Ponce.1} and H\"older's inequality in the $y$ variable to deal the first term on the right-hand side and integrate by parts in $x$ and use H\"older's inequality in both $x$ and $y$ to deal with the second term, which implies 
\begin{equation} \label{enest.1}
 \frac{d}{dt}\|J^s_xu\|_{L^2_{xy}}^2 \lesssim \|\partial_xu\|_{L^{\infty}_{xy}} \|J^s_xu\|_{L^2_{xy}}^2 \, .
\end{equation}

To deal with the $\|\partial_x^{-1}\partial_y(\cdot)\|_{L^2_{xy}}$ part of the $X^s$ norm, we apply $\partial_x^{-1}\partial_y$ to the equation in \eqref{fKP}, multiply by $\partial_x^{-1}\partial_yu$, integrate in space and integrate by parts to deduce that 
\begin{displaymath}
\frac12 \frac{d}{dt} \int_{\mathbb R^2} (\partial_x^{-1}\partial_yu)^2 \, dxdy=-\int_{\mathbb R^2}u\partial_yu \partial_x^{-1}\partial_yu dxdy=\frac12\int_{\mathbb R^2}\partial_xu( \partial_x^{-1}\partial_yu)^2 dxdy
\end{displaymath} 
Hence, it follows by applying H\"older's inequality that 
\begin{equation} \label{enest.2}
\frac12 \frac{d}{dt}\|\partial_x^{-1}\partial_yu\|_{L^2_{xy}}^2 \lesssim \|\partial_xu\|_{L^{\infty}_{xy}} \|\partial_x^{-1}\partial_yu\|_{L^2_{xy}}^2 \, .
\end{equation}

Therefore, we conclude the proof of estimate \eqref{enest} gathering \eqref{enest.1} and \eqref{enest.2} and integrating in time. 
\end{proof}

\subsection{Estimates for the Strichartz norm}\label{nonlinest}
First, we derive an \textit{a priori} estimate for the norm $\|\partial_xu\|_{L^1_TL^{\infty}_{xy}}$ based on the refined Strichartz estimate derived in Lemma \ref{l3}. Note that we also need to control $\|u\|_{L^1_TL^{\infty}_{xy}}$ in the argument. This is the main result of this section. 
\begin{lemma} \label{apriorilemma} Let $\alpha \in (0,2]$, $s_{\alpha}=2-\frac{\alpha}4$ and $T>0$.  Let $u \in C([0,T] ; X^{\infty}(\mathbb R^2))$ be a solution of the IVP \eqref{fKP}.  Then, for any $s>s_{\alpha}$, there exist $\kappa_s \in (\frac12,1)$ and $C_s>0$ such that 
\begin{equation}\label{fT}
f(T):=\int_0^T \|\partial_xu(t)\|_{L^{\infty}_{xy}}\,dt + \int_0^T \|u(t)\|_{L^{\infty}_{xy}}\,dt 
\end{equation}
satisfies 
\begin{equation}\label{apriori}
f(T) \le C_{s}T^{\kappa_s}(1+f(T))\|u\|_{L^{\infty}_TX^s} \, .
\end{equation}
\end{lemma}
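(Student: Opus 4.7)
I would apply the refined Strichartz estimate of Lemma \ref{l3} to $u$, viewed as a solution of the linear equation \eqref{lp2} with forcing $F=-u\partial_x u$. Since the threshold $s_\alpha = 2-\tfrac\alpha4$ appearing in the present lemma coincides with the quantity $\tfrac32+\tfrac12(1-\tfrac\alpha2)$ from Lemma \ref{l3}, this gives, for any $\delta\in(0,1]$,
\begin{equation*}
\|\partial_x u\|_{L^1_TL^\infty_{xy}} \lesssim T^{\kappa_\delta}\Bigl(\sup_{[0,T]}\|J_x^{s_\alpha+2\delta}u\|_{L^2_{xy}} + \sup_{[0,T]}\|J_x^{s_\alpha+\delta}D_y^\delta u\|_{L^2_{xy}} + \mathcal A(F)\Bigr),
\end{equation*}
where $\mathcal A(F)$ denotes the corresponding $L^1_TL^2_{xy}$-integrals of $J_x^{s_\alpha-1+2\delta}F$ and $J_x^{s_\alpha-1+\delta}D_y^\delta F$.

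For the two homogeneous supremum terms, I would pick $\delta>0$ small, in particular $\delta \le (s-s_\alpha)/(s+2)$, so that $\|J_x^{s_\alpha+2\delta}u\|_{L^2_{xy}}\lesssim \|u\|_{X^s}$ holds trivially once $s_\alpha+2\delta\le s$, and $\|J_x^{s_\alpha+\delta}D_y^\delta u\|_{L^2_{xy}}\lesssim\|u\|_{X^s}$ follows via the Fourier-side Young inequality $|\xi|^{s_\alpha+\delta}|\eta|^\delta \lesssim |\xi|^s + |\eta|/|\xi|$, which matches precisely the definition \eqref{Xnorm} of $X^s$.

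For the forcing piece, I would write $F = -\tfrac12\partial_x(u^2)$ and apply the Leibniz rules of Lemma \ref{frac-deriv} (part (a) on $J_x^{s_\alpha-1+2\delta}(u\partial_x u)$, part (b) on $J_x^{s_\alpha-1+\delta}D_y^\delta(u\partial_x u)$). Each resulting product has one $L^\infty_{xy}$-factor, either $\|u\|_{L^\infty_{xy}}$ or $\|\partial_x u\|_{L^\infty_{xy}}$, and a second factor involving a fractional derivative of $u$ in an $L^{p,q}_{xy}$-space. Hölder in time converts the former into a piece of $f(T)$, while the latter, through either a direct $X^s$-bound (as above) or the interpolation estimates of Lemma \ref{interpolated} (in particular the mixed-norm bounds \eqref{kenig-(2.6)}--\eqref{kenig-(2.7)}), is absorbed into $\|u\|_{L^\infty_TX^s}$. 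The outcome is $\mathcal A(F)\lesssim f(T)\|u\|_{L^\infty_TX^s}$.

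Finally, the second part of $f(T)$, $\|u\|_{L^1_TL^\infty_{xy}}$, is estimated similarly but more directly: by Hölder in time $\|u\|_{L^1_TL^\infty_{xy}}\le T^{1/2}\|u\|_{L^2_TL^\infty_{xy}}$, and Corollary \ref{cor-se} applied to the Duhamel formula for $u$, together with the same treatment of the nonlinear forcing, yields a matching bound by $T^{\kappa'_s}(1+f(T))\|u\|_{L^\infty_TX^s}$. Summing the two contributions and collecting the smallest exponent as $\kappa_s\in(\tfrac12,1)$ proves \eqref{apriori}. The chief obstacle is the bookkeeping after each use of the Leibniz rule: one must check that every fractional derivative landing on $u$ inside the $L^2_{xy}$-norm stays below the $X^s$-threshold (i.e., $\le s_\alpha+2\delta$, or $\le s_\alpha+\delta$ when paired with $D_y^\delta$) and that no term ever produces a quadratic $f(T)^2$. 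The interpolation estimates \eqref{kenig-(2.5)}--\eqref{kenig-(2.7)} are tailored precisely to trade powers of $L^\infty$-type norms against powers of the $X^s$-norm so as to preserve the linear-in-$f(T)$ structure required by \eqref{apriori}.
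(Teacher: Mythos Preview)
Your proposal is correct and follows essentially the same approach as the paper: apply the refined Strichartz estimate of Lemma~\ref{l3} with $F=-u\partial_xu$, control the homogeneous terms via the Fourier-side Young inequality, estimate the forcing through the Leibniz rules of Lemma~\ref{frac-deriv} together with the interpolation estimates \eqref{kenig-(2.5)}--\eqref{kenig-(2.7)}, and handle $\|u\|_{L^1_TL^\infty_{xy}}$ by H\"older in time and Corollary~\ref{cor-se} on the Duhamel formula. The paper's proof differs only in spelling out the case-by-case bookkeeping (the terms $I_1,I_2,J_1,J_2,J_{21},\dots,J_{24}$) that you correctly flag as the main technical chore.
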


\begin{proof}
Let us fix a constant $\delta_0$ such that $0<\delta_0<s-s_{\alpha}$.

We first estimate $\|\partial_xu\|_{L^1_TL^{\infty}_{xy}}$. Writing $F=-u\partial_xu$ in equation \eqref{lp2} and using Lemma \ref{l3} we get
\begin{align}\label{l1-derivative}
\|\partial_x u\|_{L^1_TL^{\infty}_{xy}}&\le \;c_{\delta}T^{\kappa_{\delta}} \Big( \underset{(0,T)}{\sup}\| J^{s_{\alpha}+2\delta}_xu\|_{L^2_{xy}}+
\underset{(0,T)}{\sup}\| J^{s_{\alpha}+\delta}_xD^{\delta}_y u\|_{L^2_{xy}}\Big)\\
&\hskip10pt + c_{\delta} T^{\kappa_{\delta}} \int_0^T \|J^{s_{\alpha}-1+2\delta}_x(u\partial_xu)\|_{L^2_{xy}}\,dt 
 \\ & \quad + c_{\delta} T^{\kappa_{\delta}}\int_0^T \|J^{s_{\alpha}-1+\delta}_xD^{\delta}_y(u\partial_xu)\|_{L^2_{xy}}\,dt , \nonumber
\end{align}
where $0<\delta<\delta_0$ will be determined during the proof.

We will bound each of the terms on the right hand side of \eqref{l1-derivative}.

First, it is clear by choosing $0<\delta<\delta_0/2$ that
\begin{equation}\label{RHS1}
\underset{(0,T)}{\sup}\| J^{s_{\alpha}+2\delta}_xu\|_{L^2_{xy}}\le \|u\|_{L^{\infty}_TX^s} \, .
\end{equation}

To bound $\underset{(0,T)}{\sup}\| J^{s_{\alpha}+\delta}_xD^{\delta}_y u\|_{L^2_{xy}}$ we follow the same argument used in \cite{K}. More precisely,
we use Young's inequality to obtain
\begin{equation}\label{rhs2a}
(1+|\xi|)^{s_{\alpha}+\delta}\cdot |\eta|^{\delta}\le  (1+|\xi|)^{s_{\alpha}+2\delta}\Big(\frac{|\eta|}{|\xi|}\Big)^{\delta}
\lesssim (1+|\xi|)^{\frac{s_{\alpha}+2\delta}{1-\delta}}+\frac{|\eta|}{|\xi|}.
\end{equation}
Thus choosing $\delta$ sufficiently small such that $(s_{\alpha}+2\delta)/(1-\delta)\le s_{\alpha}+\delta_0$ we have after applying Plancherel's identity
that
\begin{equation}\label{RHS2}
\underset{(0,T)}{\sup}\| J^{s_{\alpha}+\delta}_xD^{\delta}_y u\|_{L^2_{xy}}\lesssim 
\|J^s_xu\|_{L^{\infty}_TL^2_{xy}}+\|\partial_x^{-1}\partial_yu\|_{L^{\infty}_TL^2_{xy}} 
\lesssim \|u\|_{L^{\infty}_TX^s} .
\end{equation}

Next we will bound $\;\int_0^T \|J^{s_{\alpha}-1+2\delta}(u\partial_xu)\|_{L^2_{xy}}\,dt\;$. To do so we observe that
\begin{align}\label{rhs3a}
\int_0^T \|J^{s_{\alpha}-1+2\delta}_x(u\partial_xu)\|_{L^2_{xy}}\,dt &\le \int_0^T \| u\partial_xu\|_{L^2_{xy}}\,dt
+\int_0^T \|D^{s_{\alpha}-1+2\delta}_x(u\partial_xu)\|_{L^2_{xy}}\,dt \nonumber \\
&=: I_1+I_2.
\end{align}
Then, H\"older's inequality give us
\begin{equation}\label{rhs3b}
I_1 \le \underset{[0,T]}{\sup} \|u(t)\|_{L^2_{xy}}  \int_0^T \|\partial_xu(t)\|_{L^{\infty}_{xy}}\,dt \le f(T)\|u\|_{L^{\infty}_TX^s} .
\end{equation}

Since $s_{\alpha}-1+2\delta\in (0,1)$, to bound $I_2$ we employ the fractional Leibniz rule \eqref{Leibniz-1d} to obtain
\begin{equation}\label{rhs3c}
\begin{split}
I_2&\lesssim \int_0^T (\|D^{s_{\alpha}-1+2\delta}_x\partial_xu\|_{L^2_{xy}}\|u\|_{L^{\infty}_{xy}}
+\|\partial_xu\|_{L^{\infty}_{xy}}\|D^{s_{\alpha}-1+2\delta}_xu\|_{L^2_{xy}})\,dt\\
&\lesssim  f(T)\|u\|_{L^{\infty}_TX^s} \, .
\end{split}
\end{equation}

Thus gathering together the estimates \eqref{rhs3a}, \eqref{rhs3b} and \eqref{rhs3c} yields
\begin{equation}\label{RHS3}
\int_0^T \|J^{s_{\alpha}-1+2\delta}_x  (u\partial_xu)(t)\|_{L^2_{xy}}\,dt \lesssim  f(T)\|u\|_{L^{\infty}_TX^s} \, .
\end{equation}

Finally we estimate the last term in \eqref{l1-derivative}. Hence
\begin{equation}\label{rhs4a}
\begin{split}
\int_0^T \|J^{s_{\alpha}-1+\delta}_xD^{\delta}_y(u\partial_xu)\|_{L^2_{xy}}\,dt &\le \int_0^T \| D^{\delta}_y(u\partial_xu)\|_{L^2_{xy}}\,dt\\ & \quad
+\int_0^T \|D^{s_{\alpha}-1+2\delta}_xD^{\delta}_y(u\partial_xu)\|_{L^2_{xy}}\,dt\\
&=: J_1+J_2 \, .
\end{split}
\end{equation}

We use the fractional Leibniz rule \eqref{Leibniz-1d} in the $y$ variable and H\"older's inequality in the $x$ variable to obtain
\begin{equation}\label{rhs4b}
\begin{split}
&J_1\lesssim \int_0^T (\|D^{\delta}_y\partial_xu\|_{L^2_{xy}}\|u\|_{L^{\infty}_{xy}}
+\|\partial_xu\|_{L^{\infty}_{xy}}\|D^{\delta}_yu\|_{L^2_{xy}})\,dt \, .
\end{split}
\end{equation}

An argument similar to the one given in \eqref{rhs2a} gives us that
\begin{equation}\label{rhs4ba}
|\eta|^{\delta}=|\xi|^{\delta}\Big(\frac{|\eta|}{|\xi|}\Big)^{\delta}\le (1+|\xi|)^{\delta}\Big(\frac{|\eta|}{|\xi|}\Big)^{\delta}
\lesssim (1+|\xi|)^{\frac{\delta}{1-\delta}}+\frac{|\eta|}{|\xi|},
\end{equation}
where $\delta$ is choosen so that $\;\frac{\delta}{1-\delta} \le s_{\alpha}\;$ and
\begin{equation}\label{rhs4bb}
|\xi| |\eta|^{\delta}=|\xi|^{\delta+1}\Big(\frac{|\eta|}{|\xi|}\Big)^{\delta}\le (1+ |\xi|)^{\delta+1}\Big(\frac{|\eta|}{|\xi|}\Big)^{\delta}
\lesssim (1+|\xi|)^{\frac{1+\delta}{1-\delta}}+\frac{|\eta|}{|\xi|},
\end{equation}
where $\delta$ is selected so that  $\frac{1+\delta}{(1-\delta)}\le s_{\alpha}$. Plancherel's identity implies then that
\begin{equation}\label{rhs4c}
J_1  \lesssim  f(T)\|u\|_{L^{\infty}_TX^s} \, .
\end{equation}

Next we use estimate \eqref{leibniz-2d} to obtain
\begin{equation}\label{lte1}
\begin{split}
J_2&=\int_0^T \|D^{s_{\alpha}-1+\delta}_xD^{\delta}_y(u\partial_xu)\|_{L^2_{xy}}\,dt\\
&\lesssim \int_0^T \|\partial_x u\|_{L^{\infty}_{xy}}\|D^{s_{\alpha}-1+\delta}_xD^{\delta}_yu\|_{L^2_{xy}}
+ \int_0^T \|\partial_xD^{s_{\alpha}-1+\delta}_xD^{\delta}_y u\|_{L^2_{xy}}\|u\|_{L^{\infty}_{xy}}\\
&\hskip10pt+\int_0^T \|D^{s_{\alpha}-1+\delta}_xu\|_{L^{\infty}_{xy}}\|D^{\delta}_y\partial_x u\|_{L^2_{xy}}
+ \int_0^T \|D^{\delta}_yu\|_{L^{p_1}_{xy}}\|D^{s_{\alpha}-1+\delta}_x\partial_xu\|_{L^{q_1}_{xy}}\\
&= J_{21}+J_{22}+J_{23}+J_{24} ,
\end{split}
\end{equation}
where $1<p_1,q_1<\infty$ satisfy $\frac1{p_1}+\frac1{q_1}=\frac12$.

Noticing that 
\begin{equation}
|\xi|^{s_{\alpha}-1+\delta}|\eta|^{\delta}\le (1+|\xi|)^{s_{\alpha}-1+2\delta}\Big(\frac{|\eta|}{|\xi|}\Big)^{\delta} \lesssim (1+|\xi|)^{\frac{s_{\alpha}-1+\delta}{1-\delta}}+ \frac{|\eta|}{|\xi|} \, ,
\end{equation}
it follows by choosing $\delta$ such that $\;\dfrac{s_{\alpha}-1+\delta}{1-\delta}\le s_{\alpha}$  that
\begin{equation}\label{j21}
J_{21}  \lesssim f(T)\|u\|_{L^{\infty}_TX^s} \, .
\end{equation}

To bound $J_{22}$ we proceed as before. Observing that 
\begin{equation}
\begin{split}
|\xi| |\xi|^{s_{\alpha}-1+\delta}|\eta|^{\delta}=|\xi| |\xi|^{s_{\alpha}-1+2\delta}\Big(\frac{|\eta|}{|\xi|}\Big)^{\delta}&\lesssim
(1+|\xi|)^{\frac{s_{\alpha}+2\delta}{1-\delta}}+ \frac{|\eta|}{|\xi|} \, ,
\end{split}
\end{equation}
and choosing $\delta$ such that $\;\dfrac{s_{\alpha}+2\delta}{(1-\delta)}\le s_{\alpha}+\delta_0$  yield
\begin{equation}\label{j22}
J_{22} \lesssim f(T)\|u\|_{L^{\infty}_TX^s} \, .
\end{equation}

Next we estimate $J_{23}$. We will use the estimate \eqref{kenig-(2.5)} in Lemma \ref{interpolated} to obtain
\begin{equation}
\begin{split}
J_{23}&\lesssim \int_0^T \|D^{s_{\alpha}-1+\delta}_x u\|_{L^{\infty}_{xy}}\|D^{\delta}_y\partial_xu\|_{L^2_{xy}}\\
&\lesssim \underset{[0,T]}{\sup}\|D^{\delta}_y\partial_xu\|_{L^2_{xy}} 
\int_0^T \Big(\|u\|_{L^{\infty}_{xy}}+\|\partial_x u\|_{L^{\infty}_{xy}}\Big)\,dt.
\end{split}
\end{equation}
On the other hand, noticing that
\begin{equation*}
|\xi||\eta|^{\delta}\lesssim (1+|\xi|)^{1+\delta}\,\Big(\frac{|\eta|}{|\xi|}\Big)^{\delta}
\lesssim (1+|\xi|)^{\frac{1+\delta}{1-\delta}}+\frac{|\eta|}{|\xi|}
\end{equation*}
whenever $\;\dfrac{1+\delta}{1-\delta}<s_{\alpha}$, Plancherel's identity and Lemma \ref{l01} yield that
\begin{equation}\label{j23}
J_{23}\lesssim f(T)\|u\|_{L^{\infty}_T X^s} \, .
\end{equation}

Finally, to bound $J_{24}$ we employ the inequalities \eqref{kenig-(2.6)} and \eqref{kenig-(2.7)} in Lemma \ref{interpolated}. Hence
we have that
\begin{equation}
\begin{split}
J_{24} &\le \|D^{s_{\alpha}-1+\delta}_x\partial_xu\|_{L^{s_1}_TL^{q_1}_{xy}} \|D^{\delta}_y u\|_{L^{r_1}_TL^{p_1}_{xy}}\\
&\lesssim  \|\partial_xu\|_{L^1_TL^{\infty}_{xy}}^{\theta} \|J^{s_{\alpha}+\delta_0}_x u\|_{L^{\infty}_TL^2_{xy}}^{1-\theta}
 \|u\|_{L^1_TL^{\infty}_{xy}}^{1-\theta}
 \Big( \|D^{\frac12}_yu\|_{L^{\infty}_TL^2_{xy}}+
  \|u\|_{L^{\infty}_TL^2_{xy}}\Big)^{\theta}
 \end{split}
 \end{equation}
 if $\delta>0$ is chosen small enough.
 Since
 \begin{equation*}
 |\eta|^{\frac12}\le \Big(\frac{|\eta|}{|\xi|}\Big)^{\frac12}(1+|\xi|)^{\frac12}\lesssim \frac{|\eta|}{|\xi|}+(1+|\xi|)\lesssim  \frac{|\eta|}{|\xi|}+(1+|\xi|)^{s_{\alpha}},
 \end{equation*}
 with $\delta_0$ as above,  Plancherel's identity yields
 \begin{equation*}
  \|D^{\frac12}_yu\|_{L^{\infty}_TL^2_{xy}}+ \|u\|_{L^{\infty}_TL^2_{xy}}\lesssim  \|u\|_{L^{\infty}_TX^s} \, .
 \end{equation*}
  Therefore,  it follows that
 \begin{equation}\label{j24}
 J_{24}\lesssim  \|\partial_xu\|_{L^1_TL^{\infty}_{xy}}^{\theta}
 \|u\|_{L^1_TL^{\infty}_{xy}}^{1-\theta}\|u\|_{L^{\infty}_TX^s} \lesssim f(T)\|u\|_{L^{\infty}_TX^s}  \, .
 \end{equation}

Gathering together the information in \eqref{RHS1}, \eqref{RHS2}, \eqref{RHS3}, \eqref{rhs4c}, \eqref{j21}, \eqref{j22},
\eqref{j23} and \eqref{j24}  we obtain
\begin{equation} \label{j25}
\|\partial_x u\|_{L^1_TL^{\infty}_{xy}} \le c_sT^{\kappa_s}(1+f(T))\|u\|_{L^{\infty}_TX^s} \, ,
\end{equation}
with $\kappa_s=\kappa_{\delta} \in (\frac12,1)$.

To estimate  $\|u\|_{L^1_TL^{\infty}_{xy}}$, observe by Duhamel's principle that $u$ is solution to the integral equation 
\begin{displaymath}
u(t)=U(t)u_0+\int_0^tU_{\alpha}(t-t')(u\partial_xu) dt' \, .
\end{displaymath}
Then it follows by using H\"older's inequality in time and Corollary \ref{cor-se} that 
\begin{displaymath}
\begin{split}
\|u\|_{L^1_TL^{\infty}_{xy}} &\lesssim T^{\kappa_s}\left(\|J_x^{\frac12(1-\frac{\alpha}2)+\delta}u_0\|_{L^2_{xy}}+\|D_x^{\frac12(1-\frac{\alpha}2)}D_y^{\delta}u_0\|_{L^2_{xy}} \right) \\
& \quad +T^{\kappa_s}\int_0^T\|J_x^{\frac12(1-\frac{\alpha}2)+\delta}(u\partial_xu)\|_{L^2_{xy}}dt\\ &\quad+T^{\kappa_s}\int_0^T\|D_x^{\frac12(1-\frac{\alpha}2)}D_y^{\delta}(u\partial_xu)\|_{L^2_{xy}}dt  \, .
\end{split}
\end{displaymath}
Noting that $\frac12(1-\frac{\alpha}2)+1<2-\frac{\alpha}4=s_{\alpha}$, it can be deduced arguing in the same lines as above (it is actually easier) that 
\begin{equation} \label{j26}
\|u\|_{L^1_TL^{\infty}_{xy}} \le c_sT^{\kappa_s}(1+f(T))\|u\|_{L^{\infty}_TX^s} \, ,
\end{equation}

Therefore, we conclude the proof of Lemma \ref{apriorilemma} combining estimates \eqref{j25} and \eqref{j26}.
\end{proof}

We also will need an \textit{a priori} estimate for $\|\partial_x^2u\|_{L^1_TL^{\infty}_{xy}}$ in order to run the Bona-Smith argument in Section \ref{section_existence}.

\begin{lemma} \label{apriori_lemma2} Let $\alpha \in (0,2]$, $s_{\alpha}=2-\frac{\alpha}4$ and  $T>0$.  Let $u \in C([0,T] ; X^{\infty}(\mathbb R^2))$ be a solution of the IVP \eqref{fKP}.  Then, for any $s>s_{\alpha}$, there exist $\kappa_s \in (\frac12,1)$ and $C_s>0$ such that
\begin{equation}\label{apriori_lemma2.1}
\|\partial_x^2u\|_{L^1_TL^{\infty}_{xy}} \le C_{s}T^{\kappa_s}(1+f(T))\|u\|_{L^{\infty}_TX^{s+1}_{xy}}
+C_{s}T^{\kappa_s}\|\partial_x^2u\|_{L^1_TL^{\infty}_{xy}}\|u\|_{L^{\infty}_TX^s} \, .
\end{equation}
where $f(T)$ is defined in \eqref{fT}.
\end{lemma}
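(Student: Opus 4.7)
The plan is to mimic the proof of Lemma \ref{apriorilemma}, applied now to $v:=\partial_xu$ rather than to $u$. Differentiating \eqref{fKP} once in $x$ shows that $v$ solves the linear equation \eqref{lp2} with forcing
\begin{displaymath}
F=-\partial_x(u\partial_xu)=-(\partial_xu)^2-u\,\partial_x^2u,
\end{displaymath}
so applying the refined Strichartz estimate of Lemma \ref{l3} to $v$ yields, for some small $0<\delta<\delta_0<s-s_\alpha$,
\begin{align*}
\|\partial_x^2u\|_{L^1_TL^\infty_{xy}}
\le c_\delta T^{\kappa_\delta}\Big(&\sup_{[0,T]}\|J^{s_\alpha+2\delta}_x\partial_xu\|_{L^2_{xy}}+\sup_{[0,T]}\|J^{s_\alpha+\delta}_xD^\delta_y\partial_xu\|_{L^2_{xy}}\\
&+\int_0^T\bigl(\|J^{s_\alpha-1+2\delta}_x\partial_x(u\partial_xu)\|_{L^2_{xy}}+\|J^{s_\alpha-1+\delta}_xD^\delta_y\partial_x(u\partial_xu)\|_{L^2_{xy}}\bigr)dt\Big).
\end{align*}
The two linear terms are bounded by $\|u\|_{L^\infty_TX^{s+1}}$, exactly as in \eqref{RHS1}--\eqref{RHS2}, using the Young-type decomposition \eqref{rhs2a} to trade a small power of $D_y^\delta$ for the extra $x$-regularity plus the $\partial_x^{-1}\partial_y$ control coming from the $X^{s+1}$ norm.

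For the nonlinear terms I would simply run the four estimates \eqref{RHS3}--\eqref{j24} from the previous lemma, but with $u\partial_xu$ replaced by $\partial_x(u\partial_xu)=(\partial_xu)^2+u\partial_x^2u$. Each occurrence of a Leibniz-type product that formerly put $\partial_xu$ in $L^\infty_{xy}$ against $J_x^{s_\alpha+O(\delta)}u$ in $L^2_{xy}$ now becomes either
\begin{itemize}
\item a term $\|\partial_xu\|_{L^\infty_{xy}}\|J^{s_\alpha+O(\delta)}_x\partial_xu\|_{L^2_{xy}}$ or $\|u\|_{L^\infty_{xy}}\|J^{s_\alpha+1+O(\delta)}_x u\|_{L^2_{xy}}$, which is controlled by $f(T)\|u\|_{L^\infty_TX^{s+1}}$ after absorbing the $D_y^\delta$ factors through the elementary inequalities \eqref{rhs4ba}--\eqref{rhs4bb} (now allowing one extra unit of $x$-regularity, which fits since $s+1>s_\alpha+1$);
\item or a term of the form $\|\partial_x^2u\|_{L^\infty_{xy}}\|u\|_{L^2_{xy}}$ (and its $D_y^\delta$-twisted cousins), coming from the $u\,\partial_x^2u$ piece, which is bounded by $\|\partial_x^2u\|_{L^1_TL^\infty_{xy}}\|u\|_{L^\infty_TX^s}$.
\end{itemize}
The interpolated product estimate \eqref{j24} is handled exactly as in Lemma \ref{apriorilemma}, applying \eqref{kenig-(2.5)}--\eqref{kenig-(2.7)} with $u$ replaced by $\partial_xu$ where appropriate; since $s+1>s_\alpha+1$, the $J^{s_\alpha+\delta_0}_x\partial_xu$ factor is absorbed in $\|u\|_{L^\infty_TX^{s+1}}$, and the $\|\partial_xu\|_{L^1_TL^\infty_{xy}}^\theta$ and $\|u\|_{L^1_TL^\infty_{xy}}^{1-\theta}$ factors are absorbed in $f(T)$.

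Assembling all the pieces yields exactly \eqref{apriori_lemma2.1}. The genuine difficulty is the single term produced by $u\,\partial_x^2u$: it must be kept on the right-hand side as $\|\partial_x^2u\|_{L^1_TL^\infty_{xy}}\|u\|_{L^\infty_TX^s}$ and later absorbed via a smallness assumption on $T$ (or on $\|u\|_{L^\infty_TX^s}$) when this lemma is used in the Bona--Smith step in Section \ref{section_existence}. Everything else is routine once the Leibniz rules and the Young-type trade between $D_y^\delta$ and $\partial_x$ from Lemma \ref{apriorilemma} have been set up.
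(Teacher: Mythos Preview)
Your proposal is correct and is exactly the approach the paper has in mind: the authors simply write that the proof ``follows the lines as the one of Lemma \ref{apriorilemma}'' and omit the details. You have correctly identified the one new feature, namely that the piece $u\,\partial_x^2u$ of $\partial_x(u\partial_xu)$ produces the extra term $\|\partial_x^2u\|_{L^1_TL^{\infty}_{xy}}\|u\|_{L^{\infty}_TX^s}$ on the right-hand side, to be absorbed later by choosing $T$ (equivalently $A_s$) small enough.
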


\begin{proof}
The proof of Lemma \ref{apriori_lemma2} follows the lines as the one of Lemma \ref{apriorilemma}. For the sake of brevity, we will omit the details.
\end{proof}
%%%%%%%%%%%%%%%%%%%%%%%%%%%%%%%%%%%%%%%%%%%%%%%%%%%%%%%%%%%%%%

\section{Proof of Theorem \ref{maintheo}}

First, we state a useful anisotropic Sobolev embedding that will be proved in the appendix. 
\begin{lemma} \label{anisSobo}
Let $s>4$. Then, it holds that 
\begin{equation} \label{anisSobo.1}
\|\partial_xu\|_{L^{\infty}_{xy}} \lesssim \|u\|_{X^s} \, ,
\end{equation}
for any $u \in X^s$.
\end{lemma}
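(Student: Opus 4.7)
The plan is to reduce the pointwise bound on $\partial_x u$ to an $L^2$ bound on its Fourier transform via the anisotropic Sobolev embedding
\[
\|f\|_{L^\infty_{xy}}\lesssim \Big(\int_{\R^2}\langle\xi\rangle^{2s_1}\langle\eta\rangle^{2s_2}|\widehat f(\xi,\eta)|^2\,d\xi d\eta\Big)^{1/2},\qquad s_1,s_2>\tfrac12,
\]
which itself follows from applying the one-dimensional Sobolev embedding successively in $x$ and $y$. Taking $f=\partial_x u$ and using $\widehat{\partial_xu}(\xi,\eta)=i\xi\widehat u(\xi,\eta)$, matters reduce to the pointwise symbol estimate
\begin{equation}\label{pws}
|\xi|\,\langle\xi\rangle^{s_1}\langle\eta\rangle^{s_2}\;\lesssim\;\langle\xi\rangle^{s}+\frac{|\eta|}{|\xi|},
\end{equation}
since the right-hand side squared is the symbol attached to the $X^s$ norm.

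To establish \eqref{pws}, I would split the frequency space into the three natural regions. When $|\eta|\le 1$ we have $\langle\eta\rangle^{s_2}\lesssim 1$ and the left-hand side is bounded by $\langle\xi\rangle^{s_1+1}\lesssim \langle\xi\rangle^s$ provided $s_1+1\le s$, which is fine for $s>4$ and $s_1$ close to $\tfrac12$. When $|\eta|\ge 1$ and $|\xi|\le 1$ the left-hand side is $\lesssim |\eta|^{s_2}$, while the right-hand side contains $|\eta|/|\xi|\ge |\eta|$; since $s_2<1$ this direction is trivially controlled.

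The main case is $|\xi|\ge 1$ and $|\eta|\ge 1$, where one writes $|\eta|^{s_2}=|\xi|^{s_2}(|\eta|/|\xi|)^{s_2}$ and applies Young's inequality (as in the derivation of \eqref{rhs2a} in the paper) with exponents $1/s_2$ and $1/(1-s_2)$:
\[
|\xi|^{s_1+1+s_2}\Big(\frac{|\eta|}{|\xi|}\Big)^{s_2}\;\lesssim\;|\xi|^{(s_1+1+s_2)/(1-s_2)}+\frac{|\eta|}{|\xi|}.
\]
Thus \eqref{pws} holds as soon as $(s_1+1+s_2)/(1-s_2)\le s$, and letting $s_1,s_2\downarrow \tfrac12$ this ratio tends to $(1/2+1+1/2)/(1/2)=4$. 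Hence for every $s>4$ one can choose $s_1,s_2>\tfrac12$ sufficiently close to $\tfrac12$ so that the inequality is verified.

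The one delicate point, and essentially the only one, is that the admissible exponent arising from Young's inequality approaches exactly $4$ as $s_1,s_2\to \tfrac12$, which is what forces (and matches) the hypothesis $s>4$. Everything else is a routine case split, so I do not expect further obstacles.
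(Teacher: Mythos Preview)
Your proposal is correct and is essentially the same argument as the paper's: the paper also bounds $\|\partial_x u\|_{L^\infty}$ by $\int |\xi|\,|\widehat u|\,d\xi d\eta$ and applies Cauchy--Schwarz against $(1+|\xi|)^{-(1+\delta)/2}(1+|\eta|)^{-(1+\delta)/2}$ (which is exactly your anisotropic Sobolev embedding with $s_1=s_2=(1+\delta)/2$), and then uses the same Young-inequality trick on the symbol to land on the $X^s$ weight, with the exponent degenerating to the critical value as $\delta\to 0$. The only cosmetic difference is that the paper handles all frequency regions at once by expanding the product $(1+|\xi|)^{1+\delta}(1+|\eta|)^{1+\delta}$ rather than splitting into cases.
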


Using this embedding and, for instance, the argument in \cite{IN},  we can establish the following local well-posedness result for the IVP \eqref{fKP}.
\begin{lemma}\label{l1}
Let $\alpha \in (0,2]$ and $\sigma \ge 5$. For any $u_0 \in X^{\sigma}(\mathbb R^2)$, there exist a positive time $T=T(\|u_0\|_{X^{5}})>0$ and a unique solution $u \in C([0,T] : X^{\sigma}(\mathbb R^2))$ to \eqref{fKP}. Moreover, for any $R>0$, the map $u_0 \mapsto u$ is continuous from the ball of $X^{\sigma}(\mathbb R^2)$ of radius $R$ containing $u_0$ into $C([0,T(R) : X^{\sigma}(\mathbb R^2)) .$
\end{lemma}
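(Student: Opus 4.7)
The plan is to establish Lemma \ref{l1} by a parabolic-regularization plus compactness argument, followed by the Bona-Smith trick for continuity of the flow. Concretely, I would consider the regularized equation
\begin{equation*}
u_t^\mu + u^\mu u_x^\mu - D_x^\alpha u_x^\mu + \kappa \partial_x^{-1} u_{yy}^\mu + \mu \Delta^2 u^\mu = 0, \quad u^\mu(0) = u_0,
\end{equation*}
for $\mu > 0$. The bi-Laplacian is smoothing and, together with the skew-adjoint dispersive part, allows one to solve this parabolic problem locally in $X^\sigma$ via a contraction mapping argument on the Duhamel formulation: the nonlinearity $u u_x$ is locally Lipschitz on $X^\sigma$ for $\sigma \ge 5$ thanks to the embedding $X^s \hookrightarrow W^{1,\infty}_x$ of Lemma \ref{anisSobo}. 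This gives, for each $\mu > 0$, a unique solution $u^\mu \in C([0,T_\mu] : X^\sigma)$.

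Next I would derive \emph{a priori} estimates that are uniform in $\mu$. Applying the energy estimate \eqref{enest} of Lemma \ref{l01} to $u^\mu$ (the added parabolic term only contributes a nonpositive dissipative term after integration by parts) yields
\begin{equation*}
\|u^\mu\|_{L^\infty_T X^\sigma}^2 \le \|u_0\|_{X^\sigma}^2 + C_0 \|\partial_x u^\mu\|_{L^1_T L^\infty_{xy}} \|u^\mu\|_{L^\infty_T X^\sigma}^2
\end{equation*}
for $\sigma \in [1,5]$. Using Lemma \ref{anisSobo} with $s = 5$ to dominate $\|\partial_x u^\mu\|_{L^\infty_{xy}}$ by $\|u^\mu\|_{X^5}$, then H\"older in time, a standard continuity argument in $T$ produces $T = T(\|u_0\|_{X^5}) > 0$ and $M = M(\|u_0\|_{X^5})$ such that $\|u^\mu\|_{L^\infty_T X^5} \le M$ uniformly in $\mu$. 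Returning to \eqref{enest} for $\sigma \in (5, \text{large}]$ (obtained by interpolation or by iterating the same derivation at higher regularity) and invoking Gronwall gives $\|u^\mu\|_{L^\infty_T X^\sigma} \le M_\sigma(\|u_0\|_{X^\sigma})$ on the \emph{same} time interval $T(\|u_0\|_{X^5})$. From the equation itself one then reads off a uniform bound on $\partial_t u^\mu$ in a weaker space such as $X^{\sigma-3}$.

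With these bounds in hand I would pass to the limit $\mu \to 0^+$ along a subsequence by Aubin--Lions compactness, producing a distributional solution $u \in L^\infty([0,T] : X^\sigma)$ of \eqref{fKP}. Upgrading to $u \in C([0,T] : X^\sigma)$ requires a standard weak-to-strong continuity argument: one first obtains $u \in C_w([0,T]: X^\sigma)$, then shows the norm $\|u(t)\|_{X^\sigma}$ is continuous by using the energy identity (which for a smooth approximation is a genuine equality, and then passing to the limit). Uniqueness in this class follows from a direct energy estimate on the difference $w = u_1 - u_2$ of two solutions with the same datum, controlled in $L^2$ by $\int_0^T \|\partial_x u_i\|_{L^\infty_{xy}} dt$ via Gronwall.

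The main obstacle, and the last step, is continuity of the flow map in the strong $X^\sigma$ topology: the difference of two solutions emanating from close initial data can only be controlled at one order of regularity below in a direct energy estimate. I would resolve this by the Bona-Smith trick. Given $u_{0,n} \to u_0$ in $X^\sigma$, mollify both data at scale $\varepsilon$ (e.g.\ by a smooth frequency cutoff compatible with the $X^\sigma$ norm) to obtain smoother data $u_{0,n}^\varepsilon, u_0^\varepsilon$, use the $X^\sigma$-bounds uniformly in $n$ to obtain a common existence time, quantify $\|u_n^\varepsilon - u_n\|_{L^\infty_T X^{\sigma-1}} = O(\varepsilon)$ and $\|u_n^\varepsilon - u^\varepsilon\|_{L^\infty_T X^\sigma}$ in terms of $\|u_{0,n} - u_0\|_{X^\sigma}$ plus $\varepsilon$-errors, and then interpolate/combine the two. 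Choosing $\varepsilon = \varepsilon(n) \to 0$ suitably slowly gives $u_n \to u$ in $C([0,T]:X^\sigma)$. This step is the delicate one because it requires careful tracking of how each $\varepsilon$-regularized quantity depends on the $X^\sigma$-norm of the data, exactly as in the original argument in \cite{IN}.
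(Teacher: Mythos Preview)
Your proposal is correct and follows essentially the same approach as the paper, which does not give a detailed proof but simply invokes the anisotropic embedding of Lemma~\ref{anisSobo} together with the standard compactness argument of \cite{IN} (the same scheme already used for Theorem~\ref{triv}, where the Bona--Smith trick is likewise cited for continuity of the flow). Your outline is in fact more explicit than what the paper provides; the only cosmetic point is that the restriction $\sigma\in[1,5]$ in Lemma~\ref{l01} is an artefact of how the statement is phrased --- the same energy computation goes through for all $\sigma\ge 1$, so your extension to $\sigma>5$ is immediate rather than requiring a separate interpolation step.
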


\subsection{\textit{A priori} estimates} \label{section_apriori}
Let $u_0 \in X^{\infty}(\mathbb R^2))$. From the above result, there exists a solution $u \in C([0,T^{\star}) : X^{\infty}(\mathbb R^2))$ to \eqref{fKP}, where $T^{\star}$ is the maximal time of existence of $u$ satisfying $T^{\star} \ge T(\|u_0\|_{X^5})$ and we have the blow-up alternative
\begin{equation} \label{blow-up_alt}
\lim_{t \nearrow  T^{\star}}\|u(t)\|_{X^5_{xy}}=+\infty \quad \text{if} \quad T^{\star}<+\infty \, .
\end{equation}

The following \textit{a priori} estimate holds true.
\begin{lemma} \label{a priori}
Let $\alpha \in (0,2]$, $s_{\alpha}=2-\frac{\alpha}4$ and $s \in (s_{\alpha},5)$. There exist $K_0>0$, $A_s>0$ and $\kappa_0 \in (\frac12,1)$ such that $T^{\star} > (A_s\|u_0\|_{X^s}+1)^{-2}$,
\begin{equation} \label{aprioest}
\|u\|_{L^{\infty}_TX^s} \le 2\|u_0\|_{X^s} \hskip7pt \text{and} \hskip7pt  f(T) \le K_0  \hskip7pt  \text{with}  \hskip7pt  T=(A_s\|u_0\|_{X^s}+1)^{-2}\, .
\end{equation}
We recall that 
\begin{displaymath}
f(T)=\|u\|_{L^1_TL^{\infty}_{xy}}+\|\partial_xu\|_{L^1_TL^{\infty}_{xy}} \, .
\end{displaymath}
\end{lemma}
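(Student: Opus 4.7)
The plan is to run a standard continuity/bootstrap argument based on the two a priori estimates already established: the energy estimate of Lemma~\ref{l01} (applied both with $\sigma=s$ and with $\sigma=5$), and the Strichartz-type estimate of Lemma~\ref{apriorilemma}. Since $u_0\in X^\infty(\mathbb R^2)$, Lemma~\ref{l1} provides a solution $u\in C([0,T^\star);X^\infty(\mathbb R^2))$, so the quantities $T\mapsto \|u\|_{L^\infty_TX^s_{xy}}$ and $T\mapsto f(T)$ are nondecreasing and continuous on $[0,T^\star)$, starting respectively at $\|u_0\|_{X^s}$ and $0$.

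First I would fix $K_0>0$ so small that $C_0K_0\le \tfrac12$ (with $C_0$ the constant from Lemma~\ref{l01}) and introduce the maximal bootstrap time
\[
T_0:=\sup\bigl\{T\in[0,T^\star):\ \|u\|_{L^\infty_TX^s_{xy}}\le 2\|u_0\|_{X^s}\ \text{and}\ f(T)\le K_0\bigr\},
\]
which is strictly positive by continuity. On $[0,T_0]$, the energy estimate \eqref{enest} with $\sigma=s$ combined with $f(T_0)\le K_0$ absorbs half of the nonlinear term into the left-hand side and yields $\|u\|_{L^\infty_{T_0}X^s_{xy}}\le \sqrt{2}\,\|u_0\|_{X^s}$, which is strictly better than the bootstrap hypothesis. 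Plugging this back into Lemma~\ref{apriorilemma} gives
\[
f(T_0)\le C_s(1+K_0)\sqrt{2}\,T_0^{\kappa_s}\,\|u_0\|_{X^s}.
\]

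Next I would pick $A_s>0$ large enough so that, setting $T_\ast:=(A_s\|u_0\|_{X^s}+1)^{-2}$, the right-hand side above is at most $K_0/2$ for every $T_0\le T_\ast$. This is the one quantitative step of the argument: because $\kappa_s>\tfrac12$ (which is exactly the conclusion of Lemma~\ref{l3}), one has $2\kappa_s>1$ and hence
\[
T_\ast^{\kappa_s}\,\|u_0\|_{X^s}=(A_s\|u_0\|_{X^s}+1)^{-2\kappa_s}\,\|u_0\|_{X^s}\le A_s^{-(2\kappa_s-1)},
\]
uniformly in the size of the datum, so any desired smallness can be reached by enlarging $A_s$. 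This is precisely what forces the exponent $2$ in the definition of $T_\ast$. If we had $T_0<T_\ast$, both bootstrap inequalities would be strict at $T_0$, contradicting its maximality by continuity; hence $T_0\ge T_\ast$ and \eqref{aprioest} holds on $[0,T_\ast]$.

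Finally I would upgrade these bounds to $X^5$ in order to activate the blow-up alternative \eqref{blow-up_alt}: applying Lemma~\ref{l01} with $\sigma=5$ on $[0,T_\ast]$ and using $f(T_\ast)\le K_0$ with $C_0K_0\le \tfrac12$ yields $\|u\|_{L^\infty_{T_\ast}X^5_{xy}}\le \sqrt{2}\,\|u_0\|_{X^5}<\infty$, which rules out blow-up up to $T_\ast$ and therefore forces $T^\star>T_\ast$. The main difficulty of the scheme is precisely the calibration carried out in the second paragraph: one must close the bootstrap by trading the time factor $T^{\kappa_s}$ coming from the refined Strichartz estimate against the $X^s$-norm of the datum, and this is possible \emph{uniformly in} $\|u_0\|_{X^s}$ only because $\kappa_s>\tfrac12$. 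Everything else reduces to continuity and to direct applications of Lemmas~\ref{l01} and~\ref{apriorilemma}.
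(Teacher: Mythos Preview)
Your approach is essentially the paper's: a continuity/bootstrap argument built on Lemmas~\ref{l01} and~\ref{apriorilemma}, with the $X^5$ energy bound used to handle the blow-up alternative. The paper bootstraps only on $\|u\|_{L^\infty_TX^s}$ (solving for $f(T)$ directly from \eqref{apriori} once the coefficient $C_sT^{\kappa_s}\|u\|_{L^\infty_TX^s}$ is small), whereas you bootstrap on both $\|u\|_{L^\infty_TX^s}$ and $f(T)$; this is a cosmetic difference.

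There is, however, a small ordering gap. When you write ``if $T_0<T_\ast$, both bootstrap inequalities would be strict at $T_0$, contradicting its maximality by continuity'', you are implicitly assuming $T_0<T^\star$: if $T_0=T^\star<T_\ast$, the solution cannot be continued past $T_0$ and there is no contradiction. The paper deals with this by applying the $\sigma=5$ energy estimate \emph{inside} the contradiction argument: once $f(T_0)$ is small, \eqref{enest} with $\sigma=5$ bounds $\|u\|_{L^\infty_{T_0}X^5}$, and the blow-up alternative \eqref{blow-up_alt} then forces $T_0<T^\star$, making the continuity step legitimate. Your final paragraph contains exactly this argument; you just need to move it before (or fold it into) the conclusion $T_0\ge T_\ast$, rather than after.
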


\begin{proof} For $s_{\alpha}<s \le 5$, let us define
\begin{displaymath}
T_0:=\sup\Big\{ T \in (0,T^{\star}) \ : \ \|u\|_{L^{\infty}_TX^s}^2 \le 4 \|u_0\|_{X^s}^2 \Big\} \, .
\end{displaymath}
Note that the above set is nonempty since $u \in C([0,T^{\star}) : X^{\infty}(\mathbb R^2))$, so that $T_0$ is well-defined. We argue by contradiction assuming that $0<T_0<(A_s\|u_0\|_{X^s}+1)^{-2}$.

By continuity, we have that $\|u\|_{L^{\infty}_{T_0}X^s}^2 \le 4 \|u_0\|_{X^s}^2$. Then estimate \eqref{apriori} yields 
\begin{displaymath}
f(T_0) \le 2C_sT_0^{\frac12}\|u_0\|_{X^s}(1+f(T_0)) \, .
\end{displaymath}
Thus if we fix $A_s=8(1+C_0)C_s$ (where $C_0$ and $C_s$ are respectively defined in  Lemmas \ref{l01} and \ref{apriorilemma}), it follows that 
\begin{displaymath}
f(T_0) \le \frac1{3C_0} \, .
\end{displaymath}
We deduce by using the energy estimate \eqref{enest} with $\sigma=5$ that
\begin{displaymath}
\|u\|_{L^{\infty}_{T_0}X^5_{xy}}^2 \le \frac32 \|u_0\|_{X^{5}}^2 \, .
\end{displaymath}
This implies in view of the blow-up alternative \eqref{blow-up_alt} that $T_0<T^{\star}$.

Now, the energy estimate \eqref{enest} with $\sigma=s$ yields $\|u\|_{L^{\infty}_{T_0}X^s}^2 \le \frac32 \|u_0\|_{X^s}^2$, so that by continuity, $\|u\|_{L^{\infty}_TX^s}^2 \le 3\|u_0\|_{X^s}^2$ for some $T_0<T<T^{\star}$. This contradicts the definition of $T_0$. Then, we argue as above to get the bound for $f(T)$, which concludes the proof of Lemma \ref{a priori}.

\end{proof}

\subsection{Uniqueness and $L^2$-Lipschitz bound of the flow} \label{section_uniq}
Let $u_1$ and $u_2$ be two solutions of the equation in \eqref{fKP} in the class \eqref{maintheo.1} for some positive $T$ with respective initial data $u_1(\cdot,0)=\varphi_1$ and $u_2(\cdot,0)=\varphi_2$. We define the positive number $K$ by 
\begin{equation} \label{K}
K=\max\big\{\|\partial_xu_1\|_{L^1_TL^{\infty}_{xy}} , \|\partial_xu_2\|_{L^1_TL^{\infty}_{xy}} \big\} \, .
\end{equation}
We set $v=u_1-u_2$. Then $v$ satisfies 
\begin{equation} \label{fKPdiff}
\partial_tv-D^{\alpha}_x\partial_xv-\partial_x^{-1}\partial_yv+\frac12\partial_x\big((u_1+u_2)v\big)=0 \, ,
\end{equation}
with initial datum $v(\cdot,0)=\varphi_1-\varphi_2$.

We want to estimate $v$ in $X^0$. First, we deal with the $L^2$ component of $X^0$. We multiply \eqref{fKPdiff} by $v$, integrate in space and integrate by parts to deduce that
\begin{displaymath}
\frac12\frac{d}{dt}\int_{\mathbb R^2} v^2 \, dxdy=-\frac12 \int_{\mathbb R^2} \partial_x\big((u_1+u_2)v\big) \, dxdy=
-\frac14\int_{\mathbb R^2}\partial_x(u_1+u_2)v^2\, dxdy \, .
\end{displaymath}
This implies from H\"older's inequality that 
\begin{displaymath}
\frac12\frac{d}{dt}\|v\|_{L^2_{xy}}^2 \lesssim \big(\|\partial_xu_1\|_{L^{\infty}_{xy}}+ \|\partial_xu_2\|_{L^{\infty}_{xy}}\big)\|v\|_{L^2_{xy}}^2 \, .
\end{displaymath}
Therefore, it follows from Gronwall's inequality that 
\begin{equation} \label{diffL2}
\sup_{t \in [0,T]}\|v(\cdot,t)\|_{L^2_{xy}}=\sup_{t \in [0,T]}\|u_1(\cdot,t)-u_2(\cdot,t)\|_{L^2_{xy}} \le e^{cK}\|\varphi_1-\varphi_2\|_{L^2} \, .
\end{equation}
Estimate \eqref{diffL2} provides the uniqueness result in Theorem \ref{maintheo} by choosing $\varphi_1=\varphi_2=u_0$. 

\subsection{Existence} \label{section_existence}
We will consider the most difficult case where $s_{\alpha}<s <2$. Fix an initial datum $u_0 \in X^s$. 
\smallskip

We will use the Bona-Smith argument \cite{BS}. We regularize the initial datum as follows.
For $\rho\in C^{\infty}_0(\R)$, $\rho(\xi)\in [0,1]$, $\rho(\xi)=0$ for $|\xi|>1$ and $\rho(\xi)=1$ for $|\xi|\le 1/2$, define
\begin{equation*}
	u_{0,n}= P_{\le n}u_0:=P_{\le n}u_0=\{\rho({\xi}/{n}) \widehat{u}_0(\xi, \eta)\}^{\vee} \, ,
\end{equation*}
for any $n \in \mathbb N$, $n \ge 1$.
\smallskip

First, we state some properties on the regularized initial data.
\begin{lemma} \label{BSreg}
Let $\sigma \ge 0$ and $n \ge 1$. Then,
\begin{equation} \label{BSreg.1}
\|P_{\le n}J^{s+\sigma}_x u_0\|_{L^2}\le n^{\sigma} \|J^s_xu_0\|_{L^2} \, , 
\end{equation}
\begin{equation} \label{BSreg.2}
\| P_{\le n}\p^{-1}_x\p_y u_0\|_{L^2}\le  \|\p^{-1}_x\p_yu_0\|_{L^2}
\end{equation}
and 
\begin{equation} \label{BSreg.3}
\| P_{\le n}\p_y u_0\|_{L^2}\le  n\|\p^{-1}_x\p_yu_0\|_{L^2} \, .
\end{equation}
\smallskip

Let $0 \le \sigma \le s$ and $m\ge n\ge 1$. Then, 
\begin{equation} \label{BSreg.4}
\|J^{s-\sigma}_x (u_{0,n}-u_{0,m})\|_{L^2} \underset{n \to +\infty}{=}o(n^{-\sigma}) 
\end{equation}
and
\begin{equation} \label{BSreg.5}
\|\p^{-1}_x\p_y (u_{0,n}-u_{0,m})\|_{L^2}  \underset{n \to +\infty}{=}o(1) \, .
\end{equation}
\end{lemma}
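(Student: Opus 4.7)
\medskip

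\noindent\textbf{Proof plan.} All five estimates are Plancherel computations on the Fourier side, using that $P_{\le n}$, $J^s_x$, $\partial_x^{-1}\partial_y$ and $\partial_y$ are simultaneously diagonalized by the Fourier transform. I will systematically write everything in the variable $(\xi,\eta)$ and exploit the frequency localization $|\xi|\le n$ on the support of $\rho(\xi/n)$.

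For \eqref{BSreg.1}, I would write $J^{s+\sigma}_x=J^{\sigma}_xJ^s_x$, so that on the support of $\rho(\xi/n)$ the multiplier $(1+|\xi|^2)^{\sigma/2}$ is bounded by $c\,n^{\sigma}$ (for $n\ge 1$), and Plancherel gives the claim. For \eqref{BSreg.2}, since $|\rho(\xi/n)|\le 1$, the result is immediate by Plancherel. For \eqref{BSreg.3}, I would factor $\eta=\xi\cdot(\eta/\xi)$ and observe that $|\rho(\xi/n)\,\xi|\le n$ on the support of $\rho(\xi/n)$, so the Fourier symbol of $P_{\le n}\partial_y$ is pointwise bounded by $n$ times the symbol of $\partial_x^{-1}\partial_y$.

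For \eqref{BSreg.4} and \eqref{BSreg.5}, which contain the actual analytic content, the key observation is that for $m\ge n\ge 1$ the symbol $\chi_n(\xi):=\rho(\xi/m)-\rho(\xi/n)$ is uniformly bounded by $2$ and is supported in $\{|\xi|\ge n/2\}$. For \eqref{BSreg.4} I would then write
\begin{equation*}
\|J^{s-\sigma}_x(u_{0,m}-u_{0,n})\|_{L^2}^2=\int_{\mathbb R^2}|\chi_n(\xi)|^2(1+|\xi|^2)^{-\sigma}(1+|\xi|^2)^s|\widehat{u_0}(\xi,\eta)|^2\,d\xi\,d\eta,
\end{equation*}
and bound $(1+|\xi|^2)^{-\sigma}\lesssim n^{-2\sigma}$ on the support of $\chi_n$, which yields
\begin{equation*}
n^{2\sigma}\|J^{s-\sigma}_x(u_{0,m}-u_{0,n})\|_{L^2}^2\lesssim \int_{|\xi|\ge n/2}(1+|\xi|^2)^s|\widehat{u_0}(\xi,\eta)|^2\,d\xi\,d\eta.
\end{equation*}
The right-hand side tends to $0$ as $n\to+\infty$ by Lebesgue's dominated convergence theorem, since $J^s_xu_0\in L^2(\mathbb R^2)$. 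For \eqref{BSreg.5} I would write similarly
\begin{equation*}
\|\partial_x^{-1}\partial_y(u_{0,m}-u_{0,n})\|_{L^2}^2=\int_{\mathbb R^2}|\chi_n(\xi)|^2\Big|\frac{\eta}{\xi}\Big|^2|\widehat{u_0}(\xi,\eta)|^2\,d\xi\,d\eta,
\end{equation*}
and conclude by dominated convergence, using that $|\chi_n|\le 2$ with $\chi_n\to 0$ pointwise on $\{\xi\neq 0\}$ and that $(\eta/\xi)\widehat{u_0}\in L^2(\mathbb R^2)$ because $u_0\in X^s$.

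There is no real obstacle; the only point requiring minor care is the uniformity of the convergence in \eqref{BSreg.4}--\eqref{BSreg.5} with respect to $m\ge n$, which is automatic because the bound in the right-hand side depends only on $n$ after using $|\chi_n|\lesssim 1$.
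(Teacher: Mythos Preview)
Your proposal is correct and follows essentially the same approach as the paper's proof: Plancherel's identity together with the frequency localization $|\xi|\le n$ on $\mathrm{supp}\,\rho(\cdot/n)$ for \eqref{BSreg.1}--\eqref{BSreg.3}, and the support property $|\xi|\ge n/2$ for the difference $\rho(\xi/m)-\rho(\xi/n)$ combined with dominated convergence for \eqref{BSreg.4}--\eqref{BSreg.5}. Your remark about uniformity in $m$ is a nice touch that the paper leaves implicit.
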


\begin{proof} Estimates \eqref{BSreg.1}, \eqref{BSreg.2} and \eqref{BSreg.3} are easily verified by using Plancherel's identity. For example, we explained how to obtain \eqref{BSreg.3}. Indeed,
\begin{displaymath}
\begin{split}
\| P_{\le n}\partial_y u_0\|_{L^2} & \le  \left(\int_{\eta}\int_{\xi \le n} \eta^2 |\widehat{u}_0(\xi,\eta)|^2 \, d\xi d\eta \right)^{\frac12} \\ &
=  \left(\int_{\eta}\int_{\xi \le n} \Big(\frac{\eta}{\xi}\Big)^2 \xi^2 |\widehat{u}_0(\xi,\eta)|^2 \, d\xi d\eta \right)^{\frac12}
\le n \| \partial_x^{-1}\partial_y u_0\|_{L^2} \, .
\end{split}
\end{displaymath}

Now, we show \eqref{BSreg.4} and \eqref{BSreg.5}. Observe that
\begin{displaymath} 
\|J^{s-\sigma}_x (u_{0,n}-u_{0,m})\|_{L^2}\le \left(\int_{\eta}\int_{|\xi|\ge \frac{n}2} \langle \xi\rangle^{2(s-\sigma)}| \widehat{u}_0|^2\,d\xi d\eta\right)^{\frac12} \underset{n \to +\infty}{=}o(n^{-\sigma}) 
\end{displaymath}
and
\begin{displaymath}
\|\p^{-1}_x\p_y (u_{0,n}-u_{0,m})\|_{L^2}\le \Big\{\int_{\eta}\int_{|\xi|\ge \frac{n}2} |\eta|^2|\xi|^{-2}|\widehat{u}_0|^2\,d\xi d\eta\Big\}^{\frac12} \underset{n \to +\infty}{\to} 0 \, ,
\end{displaymath}
by using the Lebesgue dominated convergence theorem, since $u_0 \in X^s$.
\end{proof}

Now, for each $n \in \mathbb N$, $n \ge 1$, we consider the solution $u_n$ emanating from $u_{0,n}$. In other words, $u_n$ is a solution to the Cauchy problem   
\begin{equation}\label{fKPun}
\begin{cases}
\partial_t u_n -D^{\alpha}_x\partial_x u_n-\kappa \partial_x^{-1}\partial_y^2u_n+ u_n\partial_x u_n=0,\;\;\;(x,y)\in\R^2, t>0, \\
u_n(x,y,0)=u_{0,n}(x,y)=P_{\le n}u_0 \, .
\end{cases}
\end{equation}
From Lemma \ref{a priori}, there exists a positive time 
\begin{equation} \label{defT}
T=(A_s\|u_0\|_{X^s}+1)^{-2} \, ,
\end{equation} 
(where $A_s$ is a positive constant), independent of $n$, such that $u_n \in C([0,T] : X^{\infty}(\mathbb R^2))$ is defined on the time interval $[0,T]$ and satisfies 
\begin{equation} \label{existence.1}
\|u_n\|_{L^{\infty}_TX^s} \le 2\|u_0\|_{X^s} 
\end{equation}
and 
\begin{equation} \label{existence.2}
K:=\sup_{n \ge 1}\big\{ \|u_n\|_{L^1_TL^{\infty}_{xy}}+ \|\partial_xu_n\|_{L^1_TL^{\infty}_{xy}} \big\} <+\infty\, .
\end{equation}
Moreover, we deduce combining \eqref{apriori_lemma2.1}, \eqref{existence.1} and \eqref{existence.2} and taking $A_s$ large enough that
\begin{equation} \label{existence.200}
\|\partial_x^2u_n\|_{L^1_TL^{\infty}_{xy}} \lesssim (1+K)\|u_n\|_{L^{\infty}_TX^{s+1}_{xy}} \, .
\end{equation}

Let $m \ge n \ge 1$. We set $v_{n,m} := u_n-u_m$. Then, $v_{n,m}$ satisfies
\begin{equation} \label{fKPdiffn}
\partial_tv_{n,m}-D^{\alpha}_x\partial_xv_{n,m}-\partial_x^{-1}\partial_yv_{n,m}+\frac12\partial_x\big((u_n+u_m)v_{n,m}\big)=0 \, ,
\end{equation}
with initial datum $v_{n,m}(\cdot,0)=u_{0,n}-u_{0,m}$. We will prove that $\{v_{n,m} \}$ is a Cauchy sequence in $X^s(\mathbb R^2)$. 

Arguing as in Subsection \ref{section_uniq}, we see from Gronwall's inequality and \eqref{BSreg.4} with $\sigma=s$ that
\begin{equation} \label{existence.3}
\| v_{n,m} \|_{L^{\infty}_TL^2_{xy}} \le e^{cK}\| u_{0,n}-u_{0,m} \|_{L^2} \underset{n \to +\infty}{=} o(n^{-s})
\end{equation}
which implies interpolating with \eqref{existence.1} that 
\begin{equation} \label{existence.4} 
\| J_x^{\sigma}v_{n,m} \|_{L^{\infty}_TL^2_{xy}} \le \|J^s_xv_{n,m}\|_{L^{\infty}_TL^2_{xy}}^{\frac{\sigma}s} \|v_{n,m}\|_{L^{\infty}_TL^2_{xy}}^{1-\frac{\sigma}s}\underset{n \to +\infty}{=}o(n^{-(s-\sigma)}) \, ,
\end{equation}
for all $0 \le \sigma <s$. 
\smallskip

Therefore, in order to conclude  that $\{u_n \}$ is a Cauchy sequence in $L^{\infty}([0,T] : X^s(\mathbb R^2)$, it remains to show that $\| J_x^sv_{n,m} \|_{L^{\infty}_TL^2_{xy}}$ and $\| \partial_x^{-1}\partial_y v_{n,m} \|_{L^{\infty}_TL^2_{xy}}$ tend to $0$ as $n$ tends to $+\infty$. This will be done in the next proposition. 

\begin{proposition} \label{prop_exist}
Let $n, \, m \in \mathbb N$ be such that $m \ge n \ge 1$. Then, 
\begin{equation} \label{prop_exist.1}
\| \partial_x^{-1}\partial_y v_{n,m} \|_{L^{\infty}_TL^2_{xy}} \underset{n \to +\infty}{\longrightarrow} 0
\end{equation}
and 
\begin{equation} \label{prop_exist.2}
\| J_x^sv_{n,m} \|_{L^{\infty}_TL^2_{xy}} \underset{n \to +\infty}{\longrightarrow} 0 \, .
\end{equation}
\end{proposition}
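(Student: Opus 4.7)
The plan is to combine the \textit{a priori} $L^2$-smallness of $V:=v_{n,m}=u_n-u_m$ from \eqref{existence.3}--\eqref{existence.4} with an auxiliary polynomial-in-$n$ bound at higher regularity for $u_n,u_m$, and then run two coupled energy estimates. Since $u_{0,n}\in X^\infty$, one may apply Lemma \ref{l01} together with the mechanism of Lemma \ref{a priori} at regularity $\sigma\in[1,5]$ on the common time interval $[0,T]$ to obtain $\|u_n\|_{L^\infty_T X^\sigma}\le 2\|u_{0,n}\|_{X^\sigma}$; estimate \eqref{BSreg.1} then yields, e.g., $\|V\|_{L^\infty_T X^{s+1}}\lesssim n$ and $\|V\|_{L^\infty_T X^5}\lesssim n^{5-s}$. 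Interpolating these polynomial-growth bounds against the $L^2$-smallness via Sobolev embeddings (in the spirit of Lemma \ref{anisSobo}) produces a decay $\|V\|_{L^\infty_T L^\infty_{xy}}=o(n^{-\rho})$ with $\rho$ that can be made as large as wanted by pushing the regularity high enough; this is the crucial quantitative input for the cross terms below.

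To prove \eqref{prop_exist.1}, set $W:=\partial_x^{-1}\partial_y V$ and $U:=u_n+u_m$. Applying $\partial_x^{-1}\partial_y$ to \eqref{fKPdiffn} and using that $D_x^\alpha\partial_x$ and $\partial_x^{-1}\partial_y^2$ are skew-adjoint, the energy identity reduces to $\frac{1}{2}\frac{d}{dt}\|W\|_{L^2}^2 = -\frac{1}{2}\int \partial_y(UV)\,W\,dxdy$. I would split the integrand as $U\partial_y V\cdot W + \partial_y U\cdot V W$. The first piece exploits the identity $\partial_y V = \partial_x W$ followed by integration by parts in $x$, producing the good term $\frac{1}{4}\int \partial_x U\,W^2\,dxdy \lesssim \|\partial_x U\|_{L^\infty}\|W\|_{L^2}^2$, absorbed by $f(T)$. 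The second piece is treated by writing $\partial_y U = \partial_x(\partial_x^{-1}\partial_y U)$ and integrating by parts in $x$; the resulting integrals are bounded using the uniform control $\|\partial_x^{-1}\partial_y U\|_{L^\infty_T L^2}\lesssim \|u_0\|_{X^s}$ together with the decay of $\|\partial_x V\|_{L^2}=o(n^{-(s-1)})$ from \eqref{existence.4} and of $\|V\|_{L^\infty_{xy}}$ from the first paragraph. Combined with the initial-datum smallness $\|W(0)\|_{L^2}=o(1)$ from \eqref{BSreg.5}, Gronwall's lemma yields \eqref{prop_exist.1}.

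For \eqref{prop_exist.2}, apply $J_x^s$ to \eqref{fKPdiffn} and take the $L^2$-scalar product with $J_x^s V$. After decomposing $J_x^s\partial_x(UV) = UJ_x^s\partial_x V + [J_x^s,U]\partial_x V + J_x^s(\partial_x U\cdot V)$, integration by parts on the first term yields a good $\|\partial_x U\|_{L^\infty}\|J_x^s V\|_{L^2}^2$ contribution. Applying Kato--Ponce (Lemma \ref{Kato-Ponce}) slicewise in $y$ to the commutator, and the fractional Leibniz rule \eqref{Leibniz-1d} to the remaining product, yields contributions of the form $\|\partial_x U\|_{L^\infty}\|J_x^s V\|_{L^2}$ (absorbable in a Gronwall loop thanks to $f(T)\le K_0$) together with $\|J_x^s U\|_{L^2}\|\partial_x V\|_{L^\infty}$- and $\|J_x^{s+1}U\|_{L^2}\|V\|_{L^\infty_{xy}}$-type cross terms.

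The main obstacle is precisely these cross terms, where an $n$-divergent norm of $U$ is paired with a norm of $V$: e.g.\ $\|J_x^{s+1}U\|_{L^2}\lesssim n$ against $\|V\|_{L^\infty_{xy}}$. The resolution lies in the interpolated decay $\|V\|_{L^\infty_{xy}}=o(n^{-\rho})$ from the first paragraph: since $s>s_\alpha\ge \frac{3}{2}$, one can arrange $\rho>1$, so that the product $n\cdot o(n^{-\rho})=o(1)$ vanishes as $n\to\infty$. Plugging these estimates into the two Gronwall-type inequalities derived above, and using the initial-data smallness from \eqref{BSreg.4}--\eqref{BSreg.5}, yields \eqref{prop_exist.1} and \eqref{prop_exist.2} simultaneously and completes the Bona--Smith step.
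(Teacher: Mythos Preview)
Your overall architecture (two energy identities plus Gronwall, with cross terms controlled by smallness of $V$ in $L^\infty_{xy}$) mirrors the paper, but the crucial quantitative input you rely on is not available. You claim that interpolating $\|V\|_{L^\infty_TL^2}=o(n^{-s})$ against $\|V\|_{L^\infty_T X^5}\lesssim n^{5-s}$ and using an embedding ``in the spirit of Lemma~\ref{anisSobo}'' yields $\|V\|_{L^\infty_T L^\infty_{xy}}=o(n^{-\rho})$ with $\rho$ as large as one wishes. This fails: any anisotropic embedding $X^\sigma\hookrightarrow L^\infty_{xy}$ requires \emph{both} pieces of the $X^\sigma$ norm, namely $\|J_x^\sigma V\|_{L^2}$ \emph{and} $\|\partial_x^{-1}\partial_y V\|_{L^2}$. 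The second piece is precisely the quantity \eqref{prop_exist.1} you are trying to prove tends to~$0$; a~priori it is only $O(1)$. Hence the embedding gives at best $\|V\|_{L^\infty_{xy}}\lesssim o(n^{-(s-\sigma)})+O(1)=O(1)$ (and even the first term is useless here, since the embedding needs $\sigma>s$ when $s_\alpha<s<2$). The same problem contaminates your treatment of the term $\int\partial_yU\cdot VW$: after your integration by parts you still need either $\|\partial_xV\|_{L^\infty_{xy}}$ or $\|W\|_{L^\infty_{xy}}$ small, neither of which follows from energy/interpolation alone.

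The missing ingredient is the refined Strichartz estimate (Lemma~\ref{l3}), applied to the difference equation \eqref{fKPdiffn}: this is the content of Lemma~\ref{lem_exist}, which yields $\|V\|_{L^1_TL^\infty_{xy}}=o(n^{-1})+O(n^{-1}\|\partial_x^{-1}\partial_yV\|_{L^\infty_TL^2_{xy}})$ and $\|\partial_xV\|_{L^1_TL^\infty_{xy}}=o(1)+O(\|\partial_x^{-1}\partial_yV\|_{L^\infty_TL^2_{xy}})$. Note this is an $L^1_T$ (not $L^\infty_T$) control, and the decay exponent is exactly $n^{-1}$, not arbitrarily large. The paper then bounds $A_1$ in \eqref{prop_exist.3} by $\|\partial_yU\|_{L^2}\|V\|_{L^\infty}\|W\|_{L^2}$ with $\|\partial_yu_n\|_{L^\infty_TL^2}\lesssim n$ (from \eqref{BSreg.3} and an energy estimate), so that after time integration $n\cdot\|V\|_{L^1_TL^\infty}=o(1)+O(\|W\|_{L^\infty_TL^2})$ and Gronwall closes \eqref{prop_exist.1}. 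For \eqref{prop_exist.2} the paper similarly uses $\|J_x^{s+1}u_n\|_{L^\infty_TL^2}\lesssim n$ and $\|\partial_x^2u_n\|_{L^1_TL^\infty}\lesssim n$ (the latter via Lemma~\ref{apriori_lemma2}) against $\|V\|_{L^1_TL^\infty}=o(n^{-1})$ and $\|\partial_xV\|_{L^1_TL^\infty}=o(1)$. Without the dispersive input of Lemma~\ref{l3} there is no way to obtain the needed $L^\infty_{xy}$ decay of $V$, so your argument as written does not close.
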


In this direction, we first derive a control of $v_{n,m}$ in $L^1([0,T] : W_x^{1,\infty}(\mathbb R^2))$. 
\begin{lemma} \label{lem_exist}
Let $n, \, m \in \mathbb N$ be such that $m \ge n \ge 1$. Then, 
\begin{equation} \label{lem_exist.1}
\|v_{n,m}\|_{L^1_TL^{\infty}_{xy}} \underset{n \to +\infty}{=} o(n^{-1}) +O(n^{-1}\|\partial_x^{-1}\partial_yv_{n,m}\|_{L^{\infty}_TL^2_{xy}})
\end{equation}
and 
\begin{equation} \label{lem_exist.2}
\|\partial_xv_{n,m}\|_{L^1_TL^{\infty}_{xy}} \underset{n \to +\infty}{=} o(1)+O(\|\partial_x^{-1}\partial_yv_{n,m}\|_{L^{\infty}_TL^2_{xy}}) \, .
\end{equation} 
if the positive constant $A_s$ is chosen large enough in the definition of $T$ in \eqref{defT}.
\end{lemma}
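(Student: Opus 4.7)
The plan is to mimic the proof of Lemma \ref{apriorilemma}, now applied to the difference equation \eqref{fKPdiffn} with source term $F = -\frac{1}{2}\partial_x((u_n+u_m)v_{n,m})$. For \eqref{lem_exist.2} I would feed $v_{n,m}$ directly into the refined Strichartz estimate \eqref{refstrichartz}, and for \eqref{lem_exist.1} I would use Corollary \ref{cor-se} on the Duhamel formula, exploiting the additional structural fact that the initial data $v_{n,m}(0)=u_{0,n}-u_{0,m}$ has Fourier support contained in $\{|\xi|\ge n/2\}$; this is the source of the $n^{-1}$ gain in \eqref{lem_exist.1}.

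For \eqref{lem_exist.2}, the linear term $\sup_{[0,T]}\|J^{s_\alpha+2\delta}_xv_{n,m}\|_{L^2_{xy}}$ in \eqref{refstrichartz} is controlled by interpolating \eqref{existence.3} against the $X^s$-bound $\|J^s_xv_{n,m}\|_{L^\infty_TL^2_{xy}}\lesssim\|u_0\|_{X^s}$, which yields $o(1)$ as soon as $\delta$ is small enough that $s_\alpha+2\delta<s$. The anisotropic term $\sup_{[0,T]}\|J^{s_\alpha+\delta}_xD^\delta_yv_{n,m}\|_{L^2_{xy}}$ splits via the Young inequality \eqref{rhs2a} into a pure-$x$ contribution, again $o(1)$ by interpolation, and a piece $\|\partial_x^{-1}\partial_yv_{n,m}\|_{L^2_{xy}}$ which produces the $O(\|\partial_x^{-1}\partial_yv_{n,m}\|_{L^\infty_TL^2_{xy}})$ term in \eqref{lem_exist.2}. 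The source-term integrals are then estimated as in the derivation of \eqref{j25}: fractional Leibniz (Lemma \ref{frac-deriv}) and the interpolation inequalities of Lemma \ref{interpolated} are applied to the product $(u_n+u_m)v_{n,m}$, with $u_n+u_m$ absorbing the derivatives via \eqref{existence.1}--\eqref{existence.2} and $v_{n,m}$ supplying the smallness.

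For \eqref{lem_exist.1} I would apply Corollary \ref{cor-se} to $v_{n,m}$ via Duhamel. The initial-data pieces $\|v_{n,m}(0)\|_{L^2}$ and $\|J^{1/2-\alpha/4+\delta}_xv_{n,m}(0)\|_{L^2}$ are $o(n^{-1})$ directly by \eqref{BSreg.4} and interpolation, using $s>s_\alpha\ge\frac{3}{2}$. For the mixed term $\|D^{1/2-\alpha/4}_xD^\delta_yv_{n,m}(0)\|_{L^2}$, the Fourier-support property of $v_{n,m}(0)$ gives the trade $\|D^{1/2-\alpha/4}_xD^\delta_yv_{n,m}(0)\|_{L^2}\le (2/n)\|D^{3/2-\alpha/4}_xD^\delta_yv_{n,m}(0)\|_{L^2}$, and the Young estimate $|\xi|^{3/2-\alpha/4}|\eta|^\delta\lesssim(1+|\xi|)^{(3/2-\alpha/4+\delta)/(1-\delta)}+|\eta|/|\xi|$ then produces exactly the structure $o(n^{-1})+O(n^{-1}\|\partial_x^{-1}\partial_yv_{n,m}\|_{L^\infty_TL^2_{xy}})$ required.

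The main obstacle is the nonlinear source-term contribution to \eqref{lem_exist.1}: unlike the initial data, $(u_n+u_m)v_{n,m}(t)$ has no frequency localization for $t>0$, so the $n^{-1}$ factor cannot be extracted from Fourier support and must come from the fast $L^2$ decay $\|v_{n,m}\|_{L^\infty_TL^2_{xy}}=o(n^{-s})$ of \eqref{existence.3}. The plan is to distribute fractional derivatives using Lemma \ref{frac-deriv} so that the resulting norm of $v_{n,m}$ sits strictly below $J^s_x$-regularity; interpolating against \eqref{existence.3} then produces a factor $o(n^{-\mu})$ for some $\mu>1$, which is possible because $s>s_\alpha\ge\frac{3}{2}$ leaves room. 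The companion factor of $u_n+u_m$ then carries only an $X^s$-norm bounded by $\|u_0\|_{X^s}$, and the resulting product $\|u_0\|_{X^s}T^{\kappa_\delta}$ is forced to be small by enlarging the constant $A_s$ in \eqref{defT}, which is precisely the hypothesis stated at the end of Lemma \ref{lem_exist}.
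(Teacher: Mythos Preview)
Your plan for \eqref{lem_exist.2} is essentially what the paper does. The problem is \eqref{lem_exist.1}.

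First, a structural difference: the paper does \emph{not} use Corollary~\ref{cor-se} on the Duhamel formula for \eqref{lem_exist.1}. It uses the refined Strichartz estimate \eqref{refstrichartz} (shifted by one derivative, i.e.\ applied to $w=v_{n,m}$ rather than $\partial_x w$), which places only $J_x^{s_\alpha-1+\delta}D_y^\delta$ on the product $(u_n+u_m)v_{n,m}$ after absorbing the $\partial_x$ from $F$. Your Cor-se route places $J_x^{s_\alpha-1/2+\delta}D_y^\delta$ on the same product --- half a derivative more. That matters: when all derivatives fall on $v_{n,m}$ you need, after Young, a pure-$x$ exponent $\sigma'$ with $s-\sigma'>1$ in order to extract $o(n^{-1})$ from \eqref{existence.4}. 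With the refined Strichartz $\sigma'\approx s_\alpha-1$ and $s>s_\alpha$ suffices; with Cor-se $\sigma'\approx s_\alpha-1/2$ and you would need $s>s_\alpha+1/2$, which is not assumed. Moreover $s_\alpha-1/2+\delta>1$ so the bi-parameter Leibniz rule \eqref{leibniz-2d} does not even apply directly.

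Second, and more seriously: your stated mechanism for the nonlinear source --- ``distribute derivatives so that the $v_{n,m}$ factor sits below $J_x^s$, then interpolate against \eqref{existence.3}'' --- only treats the pure-$x$ pieces. The Leibniz decomposition unavoidably produces terms of the form $\|D_x^{\sigma}D_y^{\delta}v_{n,m}\|_{L^2}$, and the (unweighted) Young split
\[
|\xi|^{\sigma}|\eta|^{\delta}\lesssim (1+|\xi|)^{(\sigma+\delta)/(1-\delta)}+\frac{|\eta|}{|\xi|}
\]
feeds $\|\partial_x^{-1}\partial_y v_{n,m}\|_{L^\infty_TL^2}$ back in \emph{without} any $n^{-1}$ factor. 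Your Fourier-support trick only helped at $t=0$; for the Duhamel integrand there is nothing to trade against. The paper's remedy is a weighted Young inequality (its ``refinedYoung''):
\[
(1+|\xi|)^{\sigma}|\eta|^{\delta}=\bigl(n^{\delta}(1+|\xi|)^{\sigma+\delta}\bigr)\bigl(n^{-1}\tfrac{|\eta|}{|\xi|}\bigr)^{\delta}\lesssim n^{\delta/(1-\delta)}(1+|\xi|)^{(\sigma+\delta)/(1-\delta)}+n^{-1}\frac{|\eta|}{|\xi|},
\]
which manufactures the $n^{-1}$ on the antiderivative piece at the cost of a harmless $n^{\delta/(1-\delta)}$ on the pure-$x$ piece (absorbed because $s-\sigma'>1$). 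This trick is used uniformly for the linear terms \emph{and} the nonlinear source, and is the actual reason the $n^{-1}$ appears in \eqref{lem_exist.1}; Fourier localisation of the initial datum is never invoked.
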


\begin{proof} 
The proofs of both estimates follow the lines of the one of Lemma \ref{apriorilemma}. For the sake of brevity, we only give the proof of estimate \eqref{lem_exist.1}. Let us fix $\delta_0$ such that $0<\delta_0<s-s_{\alpha}$.

Since $v_{n,m}$ is a solution to \eqref{fKPdiffn}, we will use the refined Strichartz estimate \eqref{refstrichartz}. Hence we have (recall here that $0<T \le 1$)
\begin{eqnarray} 
\|v_{n,m}\|_{L^1_TL^{\infty}_x}&\lesssim& T^{\frac12}\|J^{s_{\alpha}-1+2\delta}_xv_{n,m}\|_{L^{\infty}_TL^2_{xy}}+T^{\frac12} \|J^{s_{\alpha}+\delta-1}_xD^{\delta}_yv_{n,m}\|_{L^{\infty}_TL^2_{xy}} \nonumber \\
&& +T^{\frac12} \int_0^T \|J^{s_{\alpha}-2+2\delta}_x\p_x((u_n+u_m)v_{n,m})\|_{L^2_{xy}}\,dt
\nonumber \\ &&+  
T^{\frac12}\int_0^T \|J^{s_{\alpha}+\delta-2}_xD^{\delta}_y\p_x((u_n+u_m)v_{n,m})\|_{L^{2}_{xy}}\,dt \nonumber \\
&=:&T^{\frac12} \big(D_1+D_2+D_3+D_4 \big) \, , \label{lem_exist.3}
\end{eqnarray}
for some small $0<\delta<\delta_0$ which will be determined during the proof.

By choosing $0<\delta<\frac{\delta_0}2$, it is clear from \eqref{existence.4} that 
\begin{equation} \label{lem_exist.4}
D_1  \underset{n \to +\infty}{=} o(n^{-1}) \, .
\end{equation}

To control $D_2$, we use Young's inequality with $p=\frac1{1-\delta}$ and $p'=\frac1{\delta}$ to obtain
\begin{equation} \label{refinedYoung}
\begin{split}
(1+|\xi|)^{s_{\alpha}+\delta-1} |\eta|^{\delta}&=(n^{\delta}(1+|\xi|)^{s_{\alpha}-1+2\delta} ) (n^{-1}\frac{|\eta|}{|\xi|})^{\delta}\\
&\lesssim n^{\frac{\delta}{1-\delta}}(1+|\xi|)^{\frac{s_{\alpha}-1+2\delta}{1-\delta}}+n^{-1}\frac{|\eta|}{|\xi|} \, .
\end{split}
\end{equation}
Then, it follows by using Plancherel's identity that 
\begin{displaymath}
D_2 \lesssim n^{\frac{\delta}{1-\delta}}\|J_x^{\frac{s_{\alpha}-1+2\delta}{1-\delta}}v_{n,m}\|_{L^{\infty}_TL^2_{xy}}+n^{-1}\|\partial_x^{-1}\partial_yv_{n,m}\|_{L^{\infty}_TL^2_{xy}} \, .
\end{displaymath}
To control the first term on the right-hand side, we use \eqref{existence.4} to get
\begin{displaymath}
n^{\frac{\delta}{1-\delta}}\|J_x^{\frac{s_{\alpha}-1+2\delta}{1-\delta}}v_{n,m}\|_{L^{\infty}_TL^2_{xy}} 
\lesssim n^{\frac{\delta}{1-\delta}}n^{-s+\frac{s_{\alpha}-1+2\delta}{1-\delta}} \lesssim n^{-1}n^{-s+s_{\alpha}+\delta_0}\underset{n \to +\infty}{=}o(n^{-1}) \, ,
\end{displaymath}
if $\delta$ is chosen small enough such that $(s_{\alpha}+2\delta)/(1-\delta)< s_{\alpha}+\delta_0$ .
Thus, we deduce that 
\begin{equation} \label{lem_exist.5}
D_2 \underset{n \to +\infty}{=} o(n^{-1}) +O(n^{-1}\|\partial_x^{-1}\partial_yv_{n,m}\|_{L^{\infty}_TL^2_{xy}}) \, .
\end{equation} 

Next we estimate $D_3$. Observe that
\begin{displaymath} 
\begin{split}
D_3 &\lesssim  \int_0^T \|(u_n-u_m)v_{n,m}\|_{L^2_{xy}}\,dt +\int_0^T \|D^{s_{\alpha}-1+2\delta}_x((u_n-u_m)v_{n,m})\|_{L^2_{xy}}dt \\
&= : D_{31}+D_{32}.
\end{split}
\end{displaymath}
Then, it follows by using \eqref{existence.3}
\begin{displaymath} 
D_{31} \lesssim \big( \|u_n\|_{L^1_TL^{\infty}_{xy}}+ \|u_m\|_{L^1_TL^{\infty}_{xy}}\big)
\|v_{n,m}\|_{L^{\infty}_TL^2_{xy}}\lesssim e^{cK}n^{-s} \, .
\end{displaymath}
To treat $D_{32}$ we use the Leibniz rule for fractional derivatives \eqref{Leibniz-1d} to deduce
\begin{displaymath}
\begin{split}
D_{32} 
&\lesssim  \Big(\|D^{s_{\alpha}-1+2\delta}_x u_n\|_{L^{\infty}_TL^2_{xy}}
+\|D^{s_{\alpha}-1+2\delta}_xu_m\|_{L^{\infty}_TL^2_{xy}}\Big) \|v_{n,m}\|_{L^1_TL^{\infty}_{xy}}\\
&\;\;\;\; +\Big(\|u_n\|_{L^1_TL^{\infty}_{xy}}+\|u_m\|_{L^1_TL^{\infty}_{xy}}\Big)\|D^{s_{\alpha}-1+2\delta}_x v_{n,m}\|_{L^{\infty}_TL^2_{xy}}
\end{split}
\end{displaymath}
Hence we conclude from \eqref{existence.4}  that
\begin{displaymath}
D_{32} \underset{n \to +\infty}{=} o(n^{-1}) +O(\|u_0\|_{X^s}\|v_{n,m}\|_{L^1_TL^{\infty}_{xy}})\, .
\end{displaymath}
Thus, we obtain that
%%%%%%%%%%%%%%%%%%%%%%%%%%%%%%%%%%%%%%%%%%%%%%%%%%%%%%%%%%%%%%%%%%%
\begin{equation}  \label{lem_exist.6}
D_3 \underset{n \to +\infty}{=} o(n^{-1}) +O(\|u_0\|_{X^s}\|v_{n,m}\|_{L^1_TL^{\infty}_{xy}})\, .
\end{equation}

Finally, we estimate $D_4$. Observe that 
\begin{displaymath} 
\begin{split}
D_4 &\lesssim \int_0^T \|D^{\delta}_y((u_n+u_m)v_{n,m})\|_{L^{2}_{xy}}\,dt +\int_0^T \|D^{s_{\alpha}+\delta-1}_xD^{\delta}_y((u_n+u_m)v_{n,m})\|_{L^{2}_{xy}}\,dt \\ 
&=: D_{41}+D_{42} \, .
\end{split}
\end{displaymath}
By using the fractional Leibniz rule \eqref{Leibniz-1d} in the $y$ variable and H\"older's inequality in the $x$-variable we have that 
\begin{equation*}
\begin{split} 
D_{41} &\lesssim  \big( \|D_y^{\delta}u_n\|_{L^{\infty}_TL^2_{xy}}+ \|D_y^{\delta}u_m\|_{L^{\infty}_TL^2_{xy}}\big)
\|v_{n,m}\|_{L^1_TL^{\infty}_{xy}}\\
&\hskip10pt +\big( \|u_n\|_{L^1_TL^{\infty}_{xy}}+ \|u_m\|_{L^1_TL^{\infty}_{xy}}\big)
\|D^{\delta}_yv_{n,m}\|_{L^{\infty}_TL^2_{xy}} \, .
\end{split}
\end{equation*}
Then, we use \eqref{existence.1} and argue as in \eqref{refinedYoung} to deduce that 
\begin{displaymath}
D_{41} \underset{n \to +\infty}{=} O(\|u_0\|_{X^s}\|v_{n,m}\|_{L^1_TL^{\infty}_{xy}}) +o(n^{-1}) + O(n^{-1}\|\partial_x^{-1}\partial_yv_{n,m}\|_{L^{\infty}_TL^2_{xy}})\, .
\end{displaymath}
Now, we have using the $2$ dimensional Leibniz rule \eqref{leibniz-2d} that 
\begin{displaymath}
\begin{split}
D_{42} &\lesssim \big( \|u_n\|_{L^1_TL^{\infty}_{xy}}+ \|u_m\|_{L^1_TL^{\infty}_{xy}}\big)
\|D_x^{s_{\alpha}-1+\delta}D^{\delta}_yv_{n,m}\|_{L^{\infty}_TL^2_{xy}} 
\\ &\quad +\big( \|D_x^{s_{\alpha}-1+\delta}D^{\delta}_yu_n\|_{L^{\infty}_TL^2_{xy}}+ \|D_x^{s_{\alpha}-1+\delta}D^{\delta}_yu_m\|_{L^{\infty}_TL^2_{xy}}\big)
\|v_{n,m}\|_{L^1_TL^{\infty}_{xy}}
\\ & \quad + \big( \|D_x^{s_{\alpha}-1+\delta}u_n\|_{L^1_TL^{\infty}_{xy}}+ \|D_x^{s_{\alpha}-1+\delta}u_m\|_{L^1_TL^{\infty}_{xy}}\big)
\|D^{\delta}_yv_{n,m}\|_{L^{\infty}_TL^2_{xy}} 
\\ &\quad +\big( \|D^{\delta}_yu_n\|_{L^{r_1}_TL^{p_1}_{xy}}+ \|D^{\delta}_yu_m\|_{L^{r_1}_TL^{p_1}_{xy}}\big)
\|D_x^{s_{\alpha}-1+\delta}v_{n,m}\|_{L^{s_1}_TL^{q_1}_{xy}} 
\\ &:= D_{421}+D_{422}+D_{423}+D_{424} \, ,
\end{split}
\end{displaymath}
where $1<r_1,s_1<\infty$ and $2<p_1,q_1<\infty$ are chosen as in Lemma \ref{interpolated} b). We easily see by using \eqref{refinedYoung} that
\begin{displaymath}
D_{421} \underset{n \to +\infty}{=}o(n^{-1}) + O(n^{-1}\|\partial_x^{-1}\partial_yv_{n,m}\|_{L^{\infty}_TL^2_{xy}})\, .
\end{displaymath}
Now, Young's inequality, Plancherel's identity and \eqref{existence.1} yield
\begin{displaymath}
D_{422} \lesssim \|u_0\|_{X^s}\|v_{n,m}\|_{L^1_TL^{\infty}_{xy}} \, .
\end{displaymath}
Moreover, we get from \eqref{kenig-(2.5)}, \eqref{existence.2} and arguing as in \eqref{refinedYoung} that
\begin{displaymath}
D_{423}  \underset{n \to +\infty}{=}o(n^{-1}) + O(n^{-1}\|\partial_x^{-1}\partial_yv_{n,m}\|_{L^{\infty}_TL^2_{xy}})\, .
\end{displaymath}
Finally, we deduce from \eqref{kenig-(2.6)}-\eqref{kenig-(2.7)}, \eqref{existence.1} and \eqref{existence.2} 
\begin{displaymath}
D_{424} \lesssim K^{1-\theta}\|u_0\|_{X^s}^{\theta}\|v_{n,m}\|_{L^1_TL^{\infty}_{xy}}^{\theta}
\|J_x^{s_{\alpha}-1+\delta_0}v_{n,m}\|_{L^{\infty}_TL^2_{xy}}^{1-\theta} \, ,
\end{displaymath}
so that 
\begin{displaymath}
D_{424} \underset{n \to +\infty}{=}o(n^{-1})+O(\|u_0\|_{X^s}\|v_{n,m}\|_{L^1_TL^{\infty}_{xy}} ) \, .
\end{displaymath}
as a consequence of Young's inequality and \eqref{existence.4}. Hence, we obtain gathering all those estimates that 
%%%%%%%%%%%%%%%%%%%%%%%%%%%%%%%%%%%%%%%%%%%%%%%%%%%%%%%%%%%%%%%%%%%%%
\begin{equation} \label{lem_exist.6a}
D_4 \underset{n \to +\infty}{=} o(n^{-1}) +O(\|u_0\|_{X^s}\|v_{n,m}\|_{L^1_TL^{\infty}_{xy}})+O(n^{-1}\|\partial_x^{-1}\partial_yv_{n,m}\|_{L^{\infty}_TL^2_{xy}}) \, .
\end{equation}

Therefore, we conclude gathering \eqref{lem_exist.3}--\eqref{lem_exist.6a} that
\begin{displaymath}
\|v_{n,m}\|_{L^1_TL^{\infty}_x}\underset{n \to +\infty}{=} o(n^{-1}) +O(T^{\frac12}\|u_0\|_{X^s}\|v_{n,m}\|_{L^1_TL^{\infty}_{xy}})+O(n^{-1}\|\partial_x^{-1}\partial_yv_{n,m}\|_{L^{\infty}_TL^2_{xy}}) \, ,
\end{displaymath}
which yields \eqref{lem_exist.1} if the constant $A_s$ is chosen large enough in \eqref{defT}.
\end{proof}

\begin{proof}[Proof of Proposition \ref{prop_exist}] We first prove \eqref{prop_exist.1}. 
We apply $\partial_x^{-1}\partial_y$ to \eqref{fKPdiffn} multiply by $\partial_x^{-1}\partial_yv_{n,m}$, integrate in space to deduce that
\begin{equation} \label{prop_exist.3}
\begin{split}
\frac12\frac{d}{dt}\|\p_x^{-1}\p_y v_{n,m}\|_{L^2_{xy}}^2&=
-\frac12 \int_{\mathbb R^2} \p_y((u_n+u_m)v_{n,m})\p_x^{-1}\p_y v_{n,m} \\
%&=-\frac12\int \p_y(u_n+u_m) v_{n,m} \p_x^{-1}\p_yv_{n,m} -\frac12\int (u_n+u_m)p_x^{-1}\p_y\p_xv \p_x^{-1}\p_yv\\
&=-\frac12\int_{\mathbb R^2} \p_y(u_n+u_m) v_{n,m} \p_x^{-1}\p_yv_{n,m} \\
&\hskip10pt+\frac14\int_{\mathbb R^2} \p_x(u_n+u_m) (\p_x^{-1}\p_yv_{n,m})^2\nonumber \\
&=: A_1+A_2 \, .
\end{split}
\end{equation}

On the one hand, H\"older's inequality implies that
\begin{equation} \label{prop_exist.4}
|A_2|\le \big(\|\partial_xu_n\|_{L^{\infty}_{xy}}+\|\partial_xu_m\|_{L^{\infty}_{xy}}\big) \|\p_x^{-1}\p_y v_{n,m}\|_{L^2_{xy}}^2  .
\end{equation}

On the other hand, we have still using H\"older's inequality
\begin{equation} \label{prop_exist.5}
|A_1|\le \big(\|\p_y u_n\|_{L^2_{xy}}+\|\p_y u_m\|_{L^2_{xy}}\big) \|v_{n,m}\|_{L^{\infty}_{xy}}\|\p_x^{-1}\p_y v_{n,m}\|_{L^2_{xy}}.
\end{equation}
Since $u_n$ is a solution to \eqref{fKP} we obtain after some integration by parts that
\begin{equation*}
\frac12\frac{d}{dt}\|\p_y u_n\|_{L^2_{xy}}^2=- \int_{\mathbb R^2} \p_y(u_n\p_x u_n)\p_y u_n=-\frac12\int_{\mathbb R^2} \p_xu_n(\p_y u_n)^2.
\end{equation*}
The Gronwall inequality yields then
\begin{equation*}
\|\p_y u_n(t)\|_{L^2_{xy}}\le \exp \Big(c\int_0^T\|\p_xu_n(t)\|_{L^{\infty}_{xy}} \,dt\Big)\|\p_yu_{0,n}\|_{L^2_{xy}}\le e^{cK}\|\p_yu_{0,n}\|_{L^2_{xy}} \, ,
\end{equation*}
where $K$ is defined in \eqref{existence.2}.
Thus, it follows from \eqref{BSreg.3} that
\begin{equation} \label{prop_exist.6}
\|\p_y u_n\|_{L^{\infty}_TL^2_{xy}}\lesssim  n \, .
\end{equation}
Therefore, we conclude from Gronwall's inequality combined with \eqref{prop_exist.3}--\eqref{prop_exist.6},\eqref{lem_exist.1} and  \eqref{BSreg.5} that 
\begin{displaymath} 
\|\partial_x^{-1}\partial_yv_{n,m}\|_{L^{\infty}_TL^2_{xy}} \le e^{cK} \big( \|\partial_x^{-1}\partial_y(u_{0,n}-u_{0,m})\|_{L^2}+o(1)  \big) \underset{n \to +\infty}{\longrightarrow} 0 \, ,
\end{displaymath}
which proves \eqref{prop_exist.1}.

\smallskip
Now, we prove \eqref{prop_exist.2}. We apply $J_x^s$ to \eqref{fKPdiffn} multiply by $J^s_xv_{n,m}$, integrate in space to deduce that
\begin{equation}\label{prop_exist.7}
\begin{split}
-\frac12\frac{d}{dt}\|J_x^s v_{n,m}\|_{L^2_{xy}}^2&=
\int_{\mathbb R^2} J^s_x(\partial_x(u_n+u_m)v_{n,m})J_x^sv_{n,m} \\
&\hskip10pt+\int_{\mathbb R^2} J^s_x((u_n+u_m)\partial_xv_{n,m})J_x^sv_{n,m} \\
&=: B_1+B_2 \, . 
\end{split}
\end{equation}

Firstly, we obtain after integrating by parts that
\begin{displaymath} 
B_2 =\int_{\mathbb R^2} [J_x^sv_{n,m}, u_n+u_m]\partial_xv_{n,m} \, J^s_xv_{n,m}+\frac12\int_{\mathbb R^2}\partial_x(u_n+u_m) (J_x^sv_{n,m})^2 \, .
\end{displaymath}
This implies 
\begin{equation} \label{prop_exist.8}
|B_2| \lesssim \|\partial_x(u_n+u_m)\|_{L^{\infty}_{xy}}\|J_x^sv_{n,m}\|_{L^2_{xy}}^2
+\|J^s_x(u_n+u_m)\|_{L^2_{xy}} \|\partial_xv_{n,m}\|_{L^{\infty}_{xy}}\|J_x^sv_{n,m}\|_{L^2_{xy}} \, .
\end{equation}
due to estimate \eqref{Kato-Ponce.1} and H\"older's inequality. 

Secondly, H\"older's inequality yields
\begin{equation} \label{prop_exist.9}
|B_1| \lesssim \big(\|\partial_x(u_n+u_m)\|_{L^{\infty}_{xy}}\|v_{n,m}\|_{L^2_{xy}}+\|D_x^s(\partial_x(u_n+u_m)v_{n,m})\|_{L^2_{xy}} \big)\|J_x^sv_{n,m}\|_{L^2_{xy}} \, .
\end{equation}
Observe that $D^s_x=D^{s-1}_x\mathcal{H}_x\partial_x$, where $\mathcal{H}_x$ denotes the Hilbert transform in the $x$ variable is a bounded operator in $L^2_x$. Then, we can estimate the last term on the right-hand side of \eqref{prop_exist.9} as 
\begin{eqnarray} \label{prop_exist.10}
\|D_x^s(\partial_x(u_n+u_m)v_{n,m})\|_{L^2_{xy}} &\le& \|D_x^{s-1}(\partial_x(u_n+u_m)\partial_xv_{n,m})\|_{L^2_{xy}}
\nonumber \\ &&+\|D_x^{s-1}(\partial_x^2(u_n+u_m)v_{n,m})\|_{L^2_{xy}}  \\ 
&=:& B_{11}+B_{12} \, .  \nonumber
\end{eqnarray}
By using the fractional Leibniz rule \eqref{Leibniz-1d} in the $x$-variable (recall here that $0<s-1<1$), we get that
\begin{equation} \label{prop_exist.11}
|B_{11}| \lesssim \|J^s_x(u_n+u_m)\|_{L^2_{xy}} \|\partial_xv_{n,m}\|_{L^{\infty}_{xy}}+\|J^s_xv_{n,m}\|_{L^2_{xy}} \|\partial_x(u_n+u_m)\|_{L^{\infty}_{xy}}
\end{equation}
and 
\begin{equation} \label{prop_exist.12}
|B_{12}| \lesssim \|J^{s+1}_x(u_n+u_m)\|_{L^2_{xy}} \|v_{n,m}\|_{L^{\infty}_{xy}}+\|D^{s-1}_xv_{n,m}\|_{L^2_{xy}} \|\partial_x^2(u_n+u_m)\|_{L^{\infty}_{xy}} \, .
\end{equation}

To control $\|J^{s+1}_xu_n\|_{L^2_{xy}}$,  observe from \eqref{enest.1} that
\begin{equation*}
\frac{d}{dt}\|J_x^{s+1} u_n\|_{L^2_{xy}}^2 \lesssim \|\partial_xu_n\|_{L^{\infty}_{xy}} \|J^{s+1}_xu_n\|_{L^2_{xy}}^2 \, ,
\end{equation*}
since $u_n$ is a solution to \eqref{fKP}. The Gronwall inequality yields then
\begin{equation*}
\begin{split}
\|J_x^{s+1} u_n(t)\|_{L^2_{xy}}&\le \exp \Big(c\int_0^T\|\p_xu_n(t)\|_{L^{\infty}_{xy}} \,dt\Big)\|J_x^{s+1}u_{0,n}\|_{L^2_{xy}}\\ &\le e^{cK}\|J_x^{s+1}u_{0,n}\|_{L^2_{xy}} \, ,
\end{split}
\end{equation*}
where $K$ is defined in \eqref{existence.2}.
Thus, it follows from \eqref{BSreg.1} that
\begin{equation} \label{prop_exist.13}
\|J_x^{s+1} u_n\|_{L^{\infty}_TL^2_{xy}}\lesssim  n \, .
\end{equation}

To control $\|\partial_x^2u_n\|_{L^{\infty}_{xy}}$, we combine \eqref{existence.1}, \eqref{existence.200},\eqref{prop_exist.13} and deduce that 
\begin{equation} \label{prop_exist.14}
\|\partial_x^2u_n\|_{L^1_TL^{\infty}_{xy}} \lesssim  n \, .
\end{equation}

Therefore, we conclude combining Lemma \ref{lem_exist}, \eqref{prop_exist.7}--\eqref{prop_exist.14}, \eqref{BSreg.4}  and using Gronwall's inequality that
\begin{displaymath} 
\|J_x^sv_{n,m}\|_{L^{\infty}_TL^2_{xy}} \le e^{cK} \big( \|J_x^s(u_{0,n}-u_{0,m})\|_{L^2}+o(1)  \big) \underset{n \to +\infty}{\longrightarrow} 0 \, .
\end{displaymath}
This finishes the proof of \eqref{prop_exist.2}.
\end{proof}

We deduce from Proposition \ref{prop_exist} and Lemma \ref{lem_exist} that $\{u_n\}$ is a Cauchy sequence in $L^{\infty}([0,T] : X^s(\mathbb R^2)) \cap L^1(0,T : W_x^{1,\infty}(\mathbb R^2))$.
Hence, there exists $u \in C([0,T] : X^s(\mathbb R^2)) \cap L^1(0,T : W_x^{1,\infty}(\mathbb R^2))$ such that 
\begin{equation} \label{existence.15}
\|u_n-u\|_{L^{\infty}_TX^s}+ \|u_n-u\|_{L^1_TW^{1,\infty}_{x}}\underset{n \to +\infty}{\longrightarrow} 0 \, .
\end{equation}
This allows to pass to the limit in \eqref{fKPun} and deduce that $u$ is a solution to the Cauchy problem \eqref{fKP}. 

\subsection{Continuity of the flow map data-solution} 
Once again, assume that $0<\alpha<2$ and $2-\frac{\alpha}2< s <2$. Fix $u_0 \in X^s(\mathbb R^2)$. By the existence and uniqueness part, we know that there exist a positive time $T=T(\|u_0\|_{H^s})$ and a unique solution $u \in C([0,T]:X^s(\mathbb R^2)) \cap L^1((0,T) : W_x^{1,+\infty}(\mathbb R^2))$ to \eqref{fKP}. Since $T$ is a nonincreasing function of its argument, for any $0<T'<T$, there exists a small ball $B_{\tilde{\delta}}(u_0)$ of $X^s$ centered in $u_0$ and of radius $\tilde{\delta}>0$, \textit{i.e.}
 \begin{displaymath}
 B_{\tilde{\delta}}(u_0)=\big\{v_0 \in X^s(\mathbb R) \ : \ \|v_0-u_0\|_{X^s} < \tilde{\delta} \big\},
 \end{displaymath} 
 such that for each $v_0 \in B_{\tilde{\delta}}(u_0)$, the solution $v$ to \eqref{fKP} emanating from $v_0$ is defined at least on the time interval $[0,T']$.
 
Let $\epsilon>0$ be given. It suffices to prove that there exists $\delta=\delta(\theta)$ with $0<\delta<\tilde{\delta}$ such that for any initial data $v_0 \in X^s(\mathbb R^2)$ with $\|u_0-v_0\|_{X^s} < \delta$, the solution $v \in C([0,T'];X^s(\mathbb R)^2)$  emanating from $v_0$ satisfies 
\begin{equation} \label{continuity.1} 
\|u-v\|_{L^{\infty}_{T'}X^s_{xy}} < \epsilon \, .
\end{equation}

For any $n \in \mathbb N$, $n \ge 1$, we regularize the initial data $u_0$ and $v_0$ by defining $u_{0,n}=P_{\le n} u_0$ and $v_{0,n}=P_{\le n}v_0$ as in the previous subsection and consider the associated smooth solutions $u_n, \ v_n \in C([0,T'];X^{\infty}(\mathbb R^2))$. Then it follows from the triangle inequality that 
\begin{equation} \label{continuity.2} 
\|u-v\|_{L^{\infty}_{T'}X^s} \le \|u-u_n\|_{L^{\infty}_{T'}X^s}+\|u_n-v_n\|_{L^{\infty}_{T'}X^s}
+\|v-v_n\|_{L^{\infty}_{T'}H^s_x} \ .
\end{equation}
On the one hand, according to \eqref{existence.15}, we can choose $n_0$ large enough so that 
\begin{equation} \label{continuity.3} 
\|u-u_{n_0}\|_{L^{\infty}_{T'}X^s}+\|v-v_{n_0}\|_{L^{\infty}_{T'}X^s} < \frac{2}3 \epsilon \, .
\end{equation}
On the other hand, we get from \eqref{BSreg.1} and \eqref{BSreg.2} that 
\begin{displaymath} 
\|u_{0,n_0}-v_{0,n_0}\|_{X^5} \lesssim n_0^{5-s}\|u_0-v_0\|_{X^s} \lesssim n_0^{5-s}\delta \, .
\end{displaymath}
Therefore, by using the continuity of the flow map for initial data in $X^5(\mathbb R^2)$ (c.f. Lemma \ref{l1}), we can choose $\delta>0$ small enough such that 
\begin{equation} \label{continuity.4} 
\|u_{n_0}-v_{n_0}\|_{L^{\infty}_{T'}H^s_x} < \frac13 \epsilon \, .
\end{equation}
Estimate \eqref{continuity.1} is concluded gathering \eqref{continuity.2} and \eqref{continuity.4}.

This concludes the proof of Theorem \ref{maintheo}.

\section{Final remarks}

As was proven in Section 3 the fKP-I equation is quasilinear when $0<\alpha\leq 2,$ while (see \cite{ST}) it is semilinear when $\alpha=4.$ It would be interesting to find the critical value of $\alpha$ that separates the two regimes.

We also shall mention that for the KP-I equation it was shown in \cite{KoTz2} that the data-solution map is not uniformly continuous in bounded
sets of the natural energy space. We do not know whether  this is true for the fKP-I equation.

We were mainly concerned in this paper to the {\it local} well-posedness of  the Cauchy problem. Here are some more qualitative issues that we plan to study in subsequent works.

It has been proven in \cite{L1, LW} (see also \cite{KS} for numerical simulations) that for the generalized KP-I equation

\begin{equation}\label{GKP}
u_t+u^pu_x+u_{xxx}-\partial_x^{-1}u_{yy}=0
\end{equation}
a finite time blow-up may occur in the $L^2$ supercritical case $p>\frac{4}{3}.$ This blow-up is the mechanism of the instability of the ground  states.

One expects a similar phenomenum for the $L^2$- supercritical fKP-I equation with $\frac{4}{5}<\alpha <\frac{4}{3}$  and also for the $L^2$ critical case $\alpha =\frac{4}{3}.$ 

In particular a finite
 time blow-up is expected for the KP-I version of the BO or ILW equations. 

On the other hand, one expects the orbital stability of the set of ground states of the fKP-I equation in the $L^2$ subcritical case $\frac{4}{3}<\alpha<2,$ as it is the case for the usual KP-I equation ($\alpha =2$), see \cite{deBS3}.

 A finite time blow-up (of a different nature though) is also expected in the energy supercritical range   $0<\alpha<\frac{4}{5}$ both for the fKP-I and fKP-II equations.

 Another interesting issue is that  of the transverse stability of the line solitary wave. Of special interest is the transverse stability of the (explicit) soliton of the BO and ILW equations with respect to their KP-II version. Recall that the KdV soliton is transversally stable with respect to the KP-II equation (\cite{RTz,RTz2}) and a similar result is expected for the BO-KP-II and ILW-KP -II equations.
\appendix
\section{Proof of Lemma \ref{interpolated}}
The argument of proof is analogous to the one given by Kenig in \cite{K}.  

\begin{proof}[Proof of inequality \eqref{kenig-(2.5)}]
To prove  \eqref{kenig-(2.5)} we use a Littlewood-Paley decomposition of $u$ in the $\xi$ variable.  Let $\varphi \in C^{\infty}_0(\frac12<|\xi|<2)$ and $\chi\in C_0^{\infty}(|\xi|<2)$ such that 
$\underset{k=1}{\overset{\infty}{\sum}} \varphi(2^{-k}\xi) +\chi(\xi)=1.$

For $\lg=2^k$, $k\ge 1$, define $u_{\lg}= Q_k u$, where $\widehat{Q_ku}(\xi,\eta)= \varphi(2^{-k}\xi)\widehat{u}(\xi,\eta)$,
$u_0= Q_0 u$, and $\widehat{Q_0u}(\xi,\eta)= \chi(\xi)\widehat{u}(\xi,\eta)$.  Let $u_1= \underset{k\ge1}{\sum} Q_ku$. Thus $u=u_0+u_1$.

First we estimate $D^{s_{\alpha}-1+\delta}_xu_0$. Observing that $\widehat{f}(\xi)=|\xi|^{s_{\alpha}-1+\delta}\chi(\xi)$ implies that $f\in L^1(\R)$. Then
\begin{equation}\label{A1}
\|D_x^{s_{\alpha}-1+\delta}u_0\|_{L^{\infty}_{xy}}\le \|u\|_{L^{\infty}_{xy}}.
\end{equation}

To estimate $D^{s_{\alpha}-1+\delta}_xu_1= \underset{k\ge1}{\sum} D^{s_{\alpha}-1+\delta}_xQ_ku$, observe that
\begin{equation*}
\begin{split}
\big(D^{s_{\alpha}-1+\delta}_xQ_ku\big)^{\wedge}(\xi,\eta)&= |\xi|^{s_{\alpha}-1+\delta}\,\varphi(2^{-k}\xi)\widehat{u}(\xi,\eta)=\frac{ |\xi|^{s_{\alpha}-1+\delta}}{\xi}\xi\,\varphi(2^{-k}\xi)
\widehat{u}(\xi,\eta)\\
&= c2^{k(s_{\alpha}-2+\delta)}\frac{|2^{-k}\xi|^{s_{\alpha}-1+\delta}}{(2^{-k}\xi)} \,\varphi(2^{-k}\xi)\,\widehat{\partial_xu}(\xi,\eta).
\end{split}
\end{equation*}
and that if $\;\widehat{g}(\xi)=\dfrac{|\xi|^{s_{\alpha}-1+\delta}}{\xi} \,\varphi(\xi)$, then $g\in L^1(\R)$. Thus
\begin{equation}\label{A2}
\|D^{s_{\alpha}-1+\delta}_xQ_ku\|_{L^{\infty}_{xy}}\le  2^{k(s_{\alpha}-2+\delta)}\|\partial_xu\|_{L^{\infty}_{xy}}.
\end{equation}

We conclude the proof of  inequality \eqref{kenig-(2.5)} gathering \eqref{A1} and \eqref{A2} and observing that $s_{\alpha}-2+\delta=-\frac{\alpha}4+\delta<0$, since $0<\delta<\frac{\alpha}4$.
\end{proof}

\begin{proof}[Proof of inequality \eqref{kenig-(2.6)}]

To show \eqref{kenig-(2.6)} we use the Littlewood-Paley decomposition in the $x$ variable as above. Thus
\begin{equation*}
\partial_xu=\underset{k\ge0}{\sum} Q_k(\partial_xu) \text{\hskip15pt and \hskip10pt} 
D^{s_{\alpha}-1+\delta}_x \partial_xu= \underset{k\ge0}{\sum} D^{s_{\alpha}-1+\delta}_x Q_k(\partial_xu).
\end{equation*}

For $k\ge1$
\begin{equation*}
D^{s_{\alpha}-1+\delta}_x Q_k= 2^{k(s_{\alpha}-1+\delta)} \widetilde{Q}_k
\end{equation*}
where $\widehat{ \widetilde{Q}_k f}(\xi)= \widetilde{\varphi}(2^{-k}\xi) \widehat{f}(\xi)$ and $\widetilde{\varphi}\;$ has similar
properties as $\varphi$, and $D^{s_{\alpha}-1+\delta}_x Q_0= \widetilde{Q}_0$, with 
$\widehat{ \widetilde{Q}_0 f}(\xi)= \widetilde{\chi}(\xi)\widehat{f}(\xi)$, and if $\;\widehat{g}=\widetilde{\chi}\;$ then $g\in L^1(\R)$. Thus
we have
\begin{equation*}
\|D^{s_{\alpha}-1+\delta}_x\partial_x u\|_{L^{s_1}_TL^{q_1}_{xy}}\le \underset{k\ge0}{\sum} 2^{k(s_{\alpha}-1+\delta)}
\|\widetilde{Q}_k \partial_x u\|_{L^{s_1}_TL^{q_1}_{xy}}.
\end{equation*}

Consider now $k=0$, then we have
\begin{equation}
\|\widetilde{Q}_0 \partial_x u\|_{L^{s_1}_TL^{q_1}_{xy}}\le \|\widetilde{Q}_0 \partial_x u\|_{L^1_TL^{\infty}_{xy}}^{\theta}
\|\widetilde{Q}_0 \partial_x u\|_{L^{\infty}_TL^2_{xy}}^{1-\theta} \, ,
\end{equation}
where
\begin{equation}\label{conjugates}
\dfrac{1}{s_1}=\theta, \quad \dfrac{1}{q_1}=\dfrac{1-\theta}{2} \quad \text{and} \quad 0\le \theta\le 1 \, .
\end{equation}

For $k\ge 1$, using \eqref{conjugates} it follows that
\begin{equation*}
\begin{split}
\|\widetilde{Q}_k \partial_x u\|_{L^{s_1}_TL^{q_1}_{xy}}&\le 
 \|\widetilde{Q}_k \partial_x u\|_{L^1_TL^{\infty}_{xy}}^{\theta}
\|\widetilde{Q}_k \partial_x u\|_{L^{\infty}_TL^2_{xy}}^{1-\theta}\\
&\lesssim  \|\widetilde{Q}_k \partial_x u\|_{L^1_TL^{\infty}_{xy}}^{\theta} \; 2^{-k(s_{\alpha}-1+\delta_0)(1-\theta)}  
\|\widetilde{\widetilde{Q}}_k D^{s_{\alpha}-1+\delta_0}_x \partial_x u\|_{L^{\infty}_TL^2_{xy}}^{1-\theta}\\
&\lesssim  \|\widetilde{Q}_k \partial_x u\|_{L^1_TL^{\infty}_{xy}}^{\theta} \; 2^{-k(s_{\alpha}-1+\delta_0)(1-\theta)}  
\|D^{s_{\alpha}+\delta_0}_x u\|_{L^{\infty}_TL^2_{xy}}^{1-\theta} \, ,
\end{split}
\end{equation*}
where $\widetilde{\widetilde{Q}}_k$ is associated to $\widetilde{\widetilde{\varphi}}(2^{-k}\xi)$ with $\widetilde{\widetilde{\varphi}}\;$
having similar properties as $\widetilde{\varphi}$.

Hence \eqref{kenig-(2.6)} follows, as long as
%\begin{equation*}
%\Big(\frac12+\delta\Big) < \Big(\frac12+\delta_0\Big) (1-\theta)
%\end{equation*}
\begin{equation*}
s_{\alpha}-1+\delta < \Big(s_{\alpha}-1+\delta_0\Big) (1-\theta)
\end{equation*}
or choosing $\delta$ small enough, as long as,
%\begin{equation*}
%\frac12 < \Big(\frac12+\delta_0\Big) (1-\theta)
%\end{equation*}
\begin{equation} \label{condtheta}
s_{\alpha}-1< \Big(s_{\alpha}-1+\delta_0\Big) (1-\theta) \quad \Leftrightarrow \quad 
0<\theta< 1-\frac{s_{\alpha}-1}{s_{\alpha}-1+\delta_0} \, .
 \end{equation}
\end{proof}

\begin{proof}[Proof of inequality \eqref{kenig-(2.7)}]

To establish \eqref{kenig-(2.7)} we use a Littlewood-Paley decomposition in the $y$ variable. Proceeding as before we define
\begin{equation*}
D^{\delta}_yQ_k= 2^{k\delta}\widetilde{Q}_k \text{\hskip15pt where \hskip10pt} \widehat{\widetilde{Q}_k f} (\eta)= \widetilde{\varphi}(2^{-k}\eta) \widehat{f}(\eta),
\end{equation*}
and 
\begin{equation*}
D^{\delta}_y Q_0= \widetilde{Q}_0 \text{\hskip15pt where \hskip10pt} \widehat{\widetilde{Q}_0f} (\eta)= \widetilde{\chi}(2^{-k}\eta) \widehat{f}(\eta).
\end{equation*}
As before if $\widehat{g}(\eta)=\widetilde{\chi}(\eta)$, then $g\in L^1(\R)$.

First, observe that
\begin{equation}\label{(2.7)-step1}
\|D^{\delta}_y u\|_{L^{r_1}_TL^{p_1}_{xy}}\le \underset{k\ge0}{\sum} \; 2^{k\delta}
\|\widetilde{Q}_k u\|_{L^{r_1}_TL^{p_1}_{xy}}.
\end{equation}
To bound the right hand side of \eqref{(2.7)-step1}, we deduce interpolating that
\begin{equation*}
\|\widetilde{Q}_k u\|_{L^{r_1}_TL^{p_1}_{xy}}\le \|\widetilde{Q}_k u\|_{L^1_TL^{\infty}_{xy}}^{1-\theta} \|\widetilde{Q}_k u\|_{L^{\infty}_TL^{2}_{xy}}^{\theta} \, ,
\end{equation*}
where 
\begin{equation} \label{conjugates2}
\dfrac{1}{r_1}=1-\theta, \quad \dfrac{1}{p_1}=\dfrac{\theta}{2} \quad \text{and} \quad 0\le \theta\le 1 \, .
\end{equation}

Therefore, to conclude the proof of \eqref{kenig-(2.7)}, it is enough to choose  $\delta<\frac\theta2$, where we recall that 
$\theta$ must satisfy \eqref{condtheta}.
\end{proof}

\section{Proof of Lemma \ref{anisSobo}}Let $s>4$.
Recall from the definition of $X^s$ in \eqref{Xnorm} that
\begin{equation} \label{Xs-Fourier}
\|u\|_{X^s} \sim \left( \int_{\mathbb R^2}\Big( (1+\xi^2)^s+\Big( \frac{\eta}{\xi}\Big)^2\Big) |\widehat{u}(\xi,\eta)|^2 \, d\xi d\eta \right)^{\frac12} \, .
\end{equation}

Now, we get by using the inverse Fourier formula and the Cauchy-Schwarz inequality that 
\begin{displaymath}
\|\partial_x u\|_{L^{\infty}_{xy}} \lesssim \int_{\mathbb R^2} |\xi| |\widehat{u}(\xi,\eta)| \, d\xi d\eta 
 \lesssim \left( \int_{\mathbb R^2} (1+\xi)^{1+\delta}(1+\eta)^{1+\delta} |\xi|^2|\widehat{u}(\xi,\eta)|^2 \, d\xi d\eta \right)^{\frac12} \, ,
\end{displaymath}
where $\delta$ denotes any small positive number. Moreover, Young's inequality yield
\begin{displaymath}
\begin{split}
(1+\xi)^{1+\delta}(1+\eta)^{1+\delta} |\xi|^2 &\lesssim \big(1+|\xi|^{1+\delta}+|\eta|^{1+\delta}+|\eta \xi|^{1+\delta} \big)|\xi|^2  \\ &\lesssim 1+\Big( \frac{\eta}{\xi}\Big)^2+|\xi|^{\frac{8+2\delta}{1-\delta}} \, .
\end{split}
\end{displaymath}
Therefore, we conclude the proof of inequality \eqref{anisSobo.1} by choosing $\delta>0$ such that $\frac{8+2\delta}{1-\delta}<2s$.

\begin{merci}
The Authors were partially  supported by the Brazilian-French program in mathematics and the MathAmSud program. J.-C. S. acknowledges support from the project ANR-GEODISP of the Agence Nationale de la Recherche. F.L and D.P. were partially supported by CNPq and FAPERJ/Brazil. The authors would like to thank H. Koch for pointing out reference \cite{Ha}.
\end{merci}
\bibliographystyle{amsplain}

\end{document}